\documentclass[12pt]{amsart}
\pdfoutput=1

%%---Page Setup---%%
\usepackage[paperheight=11in, paperwidth=8.5in, left=1in, top=1.25in, right=1in, bottom=1.25in]{geometry}
\usepackage[linktocpage=true, linktoc=all, colorlinks=true, linkcolor=black, citecolor=black, filecolor=black, urlcolor=black, pagebackref=false, pdfstartpage={1}, pdfstartview={FitH}, pdftitle={}, pdfauthor={}, pdfsubject={}, pdfcreator={}, pdfproducer={}, pdfkeywords={}]{hyperref}
\hfuzz=144pt

%%---Packages---%%
\usepackage{afterpage}
\makeatletter
\setlength{\@fptop}{0pt plus 1fil}
\setlength{\@fpbot}{0pt plus 1fil}
\makeatother
\usepackage{amsfonts}
\usepackage{amsmath}
\allowdisplaybreaks[4]
\usepackage{amssymb}
\usepackage{amsthm}
\usepackage{array}
\usepackage{arydshln}
\usepackage{bbm}
\usepackage[justification=centering]{caption}
\usepackage[capitalize,nameinlink,noabbrev,compress]{cleveref}

\usepackage{enumerate}
\usepackage{enumitem}
\usepackage{float}
\restylefloat{figure}
\usepackage{graphicx}
\usepackage{mathrsfs}
\usepackage[framemethod=tikz,ntheorem,xcolor]{mdframed}
\usepackage{multirow}
\usepackage{parcolumns}
\usepackage{scalerel}
\usepackage{tikz}\pdfpageattr{/Group <</S /Transparency /I true /CS /DeviceRGB>>}
\usetikzlibrary{arrows, calc, cd, decorations.markings, intersections, matrix, positioning, through}
\tikzset{commutative diagrams/.cd, every label/.append style = {font = \normalsize}}
\usepackage[all]{xy}
\usepackage[aligntableaux=center]{ytableau}

%%---Theorem Environments---%%
\numberwithin{equation}{section}
\newtheorem{thm}{Theorem}
\AfterEndEnvironment{thm}{\noindent\ignorespaces}
\numberwithin{thm}{section}

\newtheorem{conj}[thm]{Conjecture}
\AfterEndEnvironment{conj}{\noindent\ignorespaces}
\newtheorem{cor}[thm]{Corollary}
\AfterEndEnvironment{cor}{\noindent\ignorespaces}
\newtheorem{lem}[thm]{Lemma}
\AfterEndEnvironment{lem}{\noindent\ignorespaces}

\AfterEndEnvironment{prob}{\noindent\ignorespaces}
\newtheorem{prop}[thm]{Proposition}
\AfterEndEnvironment{prop}{\noindent\ignorespaces}

\theoremstyle{definition}
\newtheorem{defn}[thm]{Definition}
\AfterEndEnvironment{defn}{\noindent\ignorespaces}
%\newtheorem*{pf_no_qed}{Proof}
%\newenvironment{pf}[1][]{\begin{pf_no_qed}[#1]\renewcommand{\qedsymbol}{$\blacksquare$}\pushQED{\qed}}{\popQED\end{pf_no_qed}}
%\AfterEndEnvironment{pf}{\noindent\ignorespaces}
\newtheorem{eg_no_qed}[thm]{Example}
\newenvironment{eg}[1][]{\begin{eg_no_qed}[#1]\pushQED{\qed}}{\popQED\end{eg_no_qed}}
\AfterEndEnvironment{eg}{\noindent\ignorespaces}
\newtheorem{rmk}[thm]{Remark}
\AfterEndEnvironment{rmk}{\noindent\ignorespaces}

\theoremstyle{remark}

\AfterEndEnvironment{claim}{\noindent\ignorespaces}
\newtheorem*{claimpf_no_qed}{Proof of Claim}

\AfterEndEnvironment{claimpf}{\noindent\ignorespaces}

%%---Commands---%%

\renewcommand{\eqref}[1]{\hyperref[#1]{\textup{(\ref*{#1})}}}

\DeclareMathOperator{\Fl}{Fl}

\DeclareMathOperator{\GL}{GL}

\DeclareMathOperator{\Gr}{Gr}

\DeclareMathOperator{\SL}{SL}

\DeclareMathOperator{\spn}{span}

\DeclareMathOperator{\Wr}{\mathsf{Wr}}

\newcommand{\detstar}{\operatorname{det}^*}
\newcommand{\psubset}{\mathcal{P}}
\DeclareMathOperator{\rev}{\mathsf{rev}}
\newcommand{\rnc}{\gamma}
\newcommand{\Rnc}[1]{\Gamma_{#1}}
\newcommand{\shift}[1]{\operatorname{\mathsf{S}}_{#1}}
\newcommand{\sumof}[1]{\sum\hspace*{-1pt}{#1}}
\newcommand{\vand}[1]{\mathsf{c}_{#1}}
\newcommand{\zeros}[1]{Z_{#1}}

%%---Title page---%%
\title{Wronskians, total positivity, and real Schubert calculus}
%\date{}

\author{Steven N. Karp}
\address{Department of Mathematics, University of Notre Dame}
\email{\href{mailto:skarp2@nd.edu}{skarp2@nd.edu}}

\subjclass[2020]{14M15, 15B48, 14N15, 14P05, 41A50, 34C10}
%\keywords{}
\thanks{The author was partially supported by an NSERC postdoctoral fellowship.}

%%---Document---%%
\begin{document}

\begin{abstract}
A complete flag in $\mathbb{R}^n$ is a sequence of nested subspaces $V_1 \subset \cdots \subset V_{n-1}$ such that each $V_k$ has dimension $k$. It is called {\itshape totally nonnegative} if all its Pl\"{u}cker coordinates are nonnegative. We may view each $V_k$ as a subspace of polynomials in $\mathbb{R}[x]$ of degree at most $n-1$, by associating a vector $(a_1, \dots, a_n)$ in $\mathbb{R}^n$ to the polynomial $a_1 + a_2x + \cdots + a_nx^{n-1}$. We show that a complete flag is totally nonnegative if and only if each of its Wronskian polynomials $\Wr(V_k)$ is nonzero on the interval $(0, \infty)$. In the language of Chebyshev systems, this means that the flag forms a Markov system or $ECT$-system on $(0, \infty)$. This gives a new characterization and membership test for the totally nonnegative flag variety. Similarly, we show that a complete flag is totally positive if and only if each $\Wr(V_k)$ is nonzero on $[0, \infty]$. We use these results to show that a conjecture of Eremenko (2015) in real Schubert calculus is equivalent to the following conjecture: if $V$ is a finite-dimensional subspace of polynomials such that all complex zeros of $\Wr(V)$ lie in the interval $(-\infty, 0)$, then all Pl\"{u}cker coordinates of $V$ are real and positive. This conjecture is a totally positive strengthening of a result of Mukhin, Tarasov, and Varchenko (2009), and can be reformulated as saying that all complex solutions to a certain family of Schubert problems in the Grassmannian are real and totally positive. We also show that our conjecture is equivalent to a totally positive version of the secant conjecture of Sottile (2003).
\end{abstract}

\maketitle

\section{Introduction}\label{sec_introduction}

\noindent Let $\Fl_n(\mathbb{R})$ denote the {\itshape complete flag variety}, consisting of all sequences $V = (V_1 \subset \cdots \subset V_{n-1})$ of nested subspaces of $\mathbb{R}^n$ such that each $V_k$ has dimension $k$. Lusztig \cite{lusztig94} introduced two remarkable subsets of $\Fl_n(\mathbb{R})$, called the {\itshape totally nonnegative part} $\Fl_n^{\ge 0}$ and the {\itshape totally positive part $\Fl_n^{>0}$}. They may be defined as the subsets where all Pl\"{u}cker coordinates are nonnegative or positive, respectively, up to rescaling. The totally nonnegative parts of flag varieties have been widely studied, with connections to representation theory \cite{lusztig94}, combinatorics and cluster algebras \cite{fomin_williams_zelevinsky}, high-energy physics \cite{arkani-hamed_bai_lam17}, topology \cite{galashin_karp_lam22}, and many other topics.

On the other hand, the {\itshape Wronskian} of sufficiently differentiable functions $f_1(x), \dots, f_k(x)$ (defined on $\mathbb{R}$ or $\mathbb{C}$) is
\begin{align}\label{wronskian_formula}
\Wr(f_1, \dots, f_k) := \det\Big(\frac{d^{i-1}f_j}{dx^{i-1}}\Big)_{1 \le i,j \le k} = \det\begin{bmatrix}
f_1 & \cdots & f_k \\
f_1' & \cdots & f_k' \\
\vdots & \ddots & \vdots \\
f_1^{(k-1)} & \cdots & f_k^{(k-1)}
\end{bmatrix}.
\end{align}
The Wronskian is identically zero if $f_1, \dots, f_k$ are linearly dependent; otherwise, it only depends on the linear span $V$ of $f_1, \dots, f_k$ up to multiplication by a nonzero scalar. In particular, it makes sense to write $\Wr(V)$, and its zeros are well-defined.

The Wronskian appears in various contexts; we give three examples. First, when $k=2$, we have
$$
\Wr(f,g) = fg' - f'g = f^2\Big(\frac{g}{f}\Big)'.
$$
Hence if $f$ and $g$ are polynomials with no common factors, then the zeros of $\Wr(f,g)$ are the critical points of the rational function $\frac{g}{f}$. Second, when $f_1, \dots, f_k$ are linearly independent, the unique homogeneous linear differential operator $\mathcal{L}$ of order $k$ with leading coefficient $1$ and kernel spanned by $f_1, \dots, f_k$ is given by
\begin{align}\label{differential_operator}
\mathcal{L}(g) = \frac{\Wr(f_1, \dots, f_k, g)}{\Wr(f_1, \dots, f_k)} = \frac{d^kg}{dx^k} + \cdots.
\end{align}
Third, when $f_1, \dots, f_k$ are linearly independent and $r$ is a scalar, by interpolation there exists a nonzero $g$ in the linear span of $f_1, \dots, f_k$ with a zero of order at least $k-1$ at $r$. For a generic $r$, the zero of $g$ at $r$ has order exactly $k-1$; it is precisely when $r$ is a zero of $\Wr(f_1, \dots, f_k)$ that there exists a $g$ with a zero of order at least $k$ at $r$.

In this paper, we introduce a new connection between total positivity and Wronskians, and use it to show the hidden role that total positivity plays in certain conjectures in real Schubert calculus. We now explain our main results.

\subsection*{Complete flags and their Wronskians}
Let $\mathbb{R}[x]_{\le n-1}$ denote the subspace of $\mathbb{R}[x]$ of polynomials of degree at most $n-1$. We identify $\mathbb{R}^n$ with $\mathbb{R}[x]_{\le n-1}$, as follows:
\begin{align}\label{polynomial_identification}
\mathbb{R}^n \leftrightarrow \mathbb{R}[x]_{\le n-1}, \quad (a_1, \dots, a_n) \leftrightarrow a_1 + a_2x + \cdots + a_nx^{n-1}.
\end{align}
In particular, we may view a complete flag $(V_1, \dots, V_{n-1}) \in \Fl_n(\mathbb{R})$ as a sequence of nested subspaces inside $\mathbb{R}[x]_{\le n-1}$. Our first main result characterizes the totally nonnegative and totally positive flag varieties $\Fl_n^{\ge 0}$ and $\Fl_n^{>0}$ in terms of their Wronskian polynomials:
\begin{thm}\label{flag_wronskian}
Let $V = (V_1, \dots, V_{n-1}) \in \Fl_n(\mathbb{R})$.
\begin{enumerate}[label=(\roman*), leftmargin=*, itemsep=2pt]
\item\label{flag_wronskian_nonnegative} The flag $V$ is totally nonnegative if and only if $\Wr(V_k)$ is nonzero on the interval $(0,\infty)$, for all $1 \le k \le n-1$.
\item\label{flag_wronskian_positive} The flag $V$ is totally positive if and only if $\Wr(V_k)$ is nonzero on the interval $[0,\infty]$, for all $1 \le k \le n-1$. (Here, $\Wr(V_k)$ being nonzero at $\infty$ means that it has the expected degree $k(n-k)$.)
\end{enumerate}

\end{thm}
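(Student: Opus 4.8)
The plan is to prove both parts by induction on $n$, using the Wronskian to relate the flag $(V_1, \dots, V_{n-1})$ to a flag in one lower dimension, together with a careful analysis of the case $k = n-1$, which carries the ``top'' Pl\"ucker information. The key structural fact I would isolate first is the behavior of the Wronskian under the operator $\mathcal{L}$ from \eqref{differential_operator}: if $V_{n-1}$ is a hyperplane in $\mathbb{R}[x]_{\le n-1}$, then $\mathcal{L}_{n-1}(g) = \Wr(f_1, \dots, f_{n-1}, g)/\Wr(f_1, \dots, f_{n-1})$ sends $\mathbb{R}[x]_{\le n-1}$ onto a one-dimensional space and has kernel $V_{n-1}$; iterating, the chain $V_1 \subset \cdots \subset V_{n-2}$ sits inside $V_{n-1}$, and I want to produce from it a flag in $\mathbb{R}[x]_{\le n-2}$ (via differentiation, i.e.\ via the Wronskian $\Wr(V_{n-1}, \cdot)$ composed with an identification $V_{n-1} \cong \mathbb{R}[x]_{\le n-2}$) whose Wronskian polynomials are, up to the fixed nonzero factor $\Wr(V_{n-1})$ and a sign, the polynomials $\Wr(V_k)$ for $k \le n-2$. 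This reduction step, and checking that ``totally nonnegative/positive'' is preserved under it in the appropriate sense, is where I expect the main work to lie.

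For the base-case and top-row analysis, consider $k = n-1$ directly: write $V_{n-1} = \ker(c_1 + c_2 \frac{d}{dx} + \cdots)$, equivalently $V_{n-1}$ is the orthogonal complement of a single vector $c = (c_1, \dots, c_n)$, and the (signed) Pl\"ucker coordinates of $V_{n-1}$ are precisely $c_1, \dots, c_n$ up to alternating signs. One computes that $\Wr(V_{n-1})$ is, up to a nonzero constant, the polynomial obtained by pairing $c$ against the ``moment curve'' / its derivatives — concretely a linear combination $\sum_j (\pm) c_j \binom{?}{?} x^{?}$ whose coefficients are the Pl\"ucker coordinates with predictable binomial weights. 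Then nonvanishing of this degree-$\le n-1$ polynomial on $(0,\infty)$ forces (by Descartes' rule of signs, or rather its converse: a real polynomial with all coefficients of one sign has no positive root, and the presence of a sign change can be exploited to manufacture a positive root when the polynomial has full support) all these coefficients to be of one sign, i.e.\ all Pl\"ucker coordinates of $V_{n-1}$ nonnegative; conversely nonnegativity of the coefficients gives nonvanishing on $(0,\infty)$. For part (ii), nonvanishing on the closed interval $[0,\infty]$ additionally pins down the endpoints: nonvanishing at $\infty$ says the top coefficient (a Pl\"ucker coordinate) is nonzero, nonvanishing at $0$ says the constant term is nonzero, and combined with the interior statement this upgrades ``nonnegative'' to ``strictly positive'' for all coordinates. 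The subtle point is handling \emph{sparse} sign patterns (vanishing Pl\"ucker coordinates), where Descartes' rule is not an iff; here I would use the flag structure — the full chain $V_1 \subset \cdots \subset V_{n-1}$ and the inductive hypothesis applied to the reduced flag — to rule out the bad configurations, since total nonnegativity of a flag is genuinely stronger than total nonnegativity of each $V_k$ individually.

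Putting it together: for the forward direction of (i), assume $V$ is totally nonnegative. By the known combinatorics of $\Fl_n^{\ge 0}$, each $V_k$ is totally nonnegative and the chain is compatible; the $k=n-1$ computation above shows $\Wr(V_{n-1}) \ne 0$ on $(0,\infty)$, and the reduction step produces a totally nonnegative flag in $\Fl_{n-1}(\mathbb{R})$ (one must check the Pl\"ucker coordinates transform correctly — essentially Pl\"ucker coordinates of the reduced flag are subdeterminants that are, up to sign, Pl\"ucker coordinates of the original), to which induction applies to give $\Wr(V_k) \ne 0$ on $(0,\infty)$ for $k \le n-2$. For the converse, assume all $\Wr(V_k)$ are nonzero on $(0,\infty)$; the $k = n-1$ step gives nonnegativity of the top Pl\"ucker vector (modulo the sparse-pattern caveat, resolved as above), the reduced flag has all its Wronskians nonvanishing on $(0,\infty)$ so by induction is totally nonnegative, and transporting back via the (invertible, sign-controlled) relation between Pl\"ucker coordinates shows $V$ is totally nonnegative. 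Part (ii) follows the same scheme with ``nonvanishing on $[0,\infty]$'' throughout, the only additional ingredients being the two endpoint conditions at $0$ and $\infty$ and the standard fact that $\Fl_n^{>0}$ is the subset of $\Fl_n^{\ge 0}$ where a specific finite set of Pl\"ucker coordinates (the ``corner'' ones) are strictly positive, which the endpoint conditions supply. The chief obstacle, to reiterate, is verifying that the Wronskian-reduction $(V_1, \dots, V_{n-1}) \mapsto$ (flag in $\mathbb{R}[x]_{\le n-2}$) is the ``right'' map — that it lands in $\Fl_{n-1}$, that it is compatible with total positivity in both directions, and that its effect on Pl\"ucker coordinates is exactly a rescaling by the fixed nonzero polynomial $\Wr(V_{n-1})$ times a uniform sign — after which everything else is bookkeeping plus the elementary positive-root argument.
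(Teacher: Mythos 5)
Your proposal has two genuine gaps, and the first is fatal to the strategy as written. You assert that for the top step $k=n-1$, nonvanishing of $\Wr(V_{n-1})$ on $(0,\infty)$ forces all of its coefficients --- which by \cref{wronskian_pluckers} are the Pl\"ucker coordinates of $V_{n-1}$ up to positive constants --- to have one sign, ``since the presence of a sign change can be exploited to manufacture a positive root when the polynomial has full support.'' This converse of Descartes' rule is false: $1-x+x^2$ has full support, a sign change, and no real roots at all. Concretely, for $n=3$ the subspace $V_2\in\Gr_{2,3}(\mathbb{R})$ with $\Delta_{12}=\Delta_{23}=1$ and $\Delta_{13}=-\tfrac12$ has $\Wr(V_2)=1-x+x^2$, nonzero on all of $[0,\infty]$, yet $V_2$ is not totally nonnegative. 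So the implication ``$\Wr(V_{n-1})$ nonzero on $(0,\infty)$ $\Rightarrow$ top Pl\"ucker vector nonnegative'' is simply not available level-by-level; what saves the theorem is that no line $V_1\subset V_2$ in this example has $\Wr(V_1)$ nonzero on $(0,\infty)$. You do invoke the flag structure, but only to handle ``sparse sign patterns,'' which misdiagnoses the failure mode: the bad configurations occur with full support and no vanishing coordinates. This converse direction is precisely the hard half of the theorem, and the paper does not attempt it coefficient-by-coefficient: it shows $\Fl_n^{>0}$ is a connected component of the locus where all $\Wr(V_k)$ are nonvanishing on $[0,\infty]$ (via Lusztig's \cref{connected_component_plucker}) and then connects any flag in that locus to $\Fl_n^{>0}$ by the path $t\mapsto\shift{t}(V)$, using that $\shift{t}=\exp(tD)$ is a totally positive upper-triangular matrix.

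The second gap is that the inductive reduction $(V_1,\dots,V_{n-1})\mapsto(\text{flag in }\mathbb{R}[x]_{\le n-2})$ is never constructed, and the natural candidates do not work. The classical reduction of Markov systems replaces $f_i$ by $(f_i/f_1)'$, which leaves the space of polynomials of bounded degree (and has nothing to do with Pl\"ucker coordinates of a smaller Grassmannian); the operator $\mathcal{L}_{n-1}$ you mention annihilates the entire chain $V_1\subset\cdots\subset V_{n-2}$; and an arbitrary linear identification $V_{n-1}\cong\mathbb{R}[x]_{\le n-2}$ destroys both Wronskians and the sign pattern of Pl\"ucker coordinates. The properties you require of this map --- that it sends $\Wr(V_k)$ to $\Wr$ of the image up to a fixed nonzero factor, and Pl\"ucker coordinates to Pl\"ucker coordinates up to a uniform positive rescaling --- are essentially equivalent to the theorem itself, and you explicitly defer them as ``where the main work lies.'' As it stands, the proposal identifies the correct forward direction (positive coefficients from \cref{wronskian_pluckers}) but does not contain a proof of either converse.
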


In the language of Chebyshev systems, the conclusions above say that $V$ forms a {\itshape Markov system} (or {\itshape $ECT$-system}) on $(0,\infty)$ and $[0,\infty]$, respectively. An equivalent characterization is that for all $1 \le k \le n-1$, every nonzero polynomial $f\in V_k$ has at most $k-1$ zeros counted with multiplicity in $(0,\infty)$ and $[0,\infty]$, respectively; see \cref{sec_background_disconjugate}.
\begin{eg}\label{eg_intro}
Let $n := 3$, and let $V = (V_1, V_2)$ denote a generic element of $\Fl_3(\mathbb{R})$, represented by the matrix
$$
A := \begin{bmatrix}
1 & 0 & 0 \\
a & 1 & 0 \\
b & c & 1
\end{bmatrix} \quad (a,b,c\in\mathbb{R}).
$$
That is, $V_k$ (for $k = 1,2$) is the span of the first $k$ columns of $A$. Let us verify that \cref{flag_wronskian}\ref{flag_wronskian_positive} holds for $V$. The Pl\"{u}cker coordinates of $V$ are
$$
\Delta_1(V) = 1,\; \Delta_2(V) = a,\; \Delta_3(V) = b \quad \text{ and } \quad \Delta_{12}(V) = 1,\; \Delta_{13}(V) = c,\; \Delta_{23}(V) = ac - b.
$$
Therefore $V$ is totally positive if and only if
\begin{align}\label{eg_intro_inequalities}
a,\, b,\, c,\, ac-b > 0.
\end{align}

On the other hand, let $f_k\in\mathbb{R}[x]$ (for $k = 1,2$) be the polynomial corresponding to column $k$ of $A$, so that
$$
f_1(x) = 1 + ax + bx^2 \quad \text{ and } \quad f_2(x) = x + cx^2.
$$
Then
\begin{gather*}
\Wr(V_1) = \Wr(f_1) = 1 + ax + bx^2 =: h_1(x), \\
\Wr(V_2) = \Wr(f_1, f_2) = f_1f_2' - f_1'f_2 = 1 + 2cx + (ac-b)x^2 =: h_2(x).
\end{gather*}

We must show that \eqref{eg_intro_inequalities} holds if and only if $h_1$ and $h_2$ are nonzero on $[0,\infty]$. If \eqref{eg_intro_inequalities} holds, then $h_1$ and $h_2$ have positive coefficients, and hence they are positive on $[0,\infty]$. Conversely, suppose that $h_1$ and $h_2$ are nonzero on $[0,\infty]$. Since $h_1(0) = h_2(0) = 1$, they must be positive on $[0,\infty]$. Considering $x = \infty$, we get that $h_1$ and $h_2$ both have the expected degree $2$, and $b, ac - b > 0$. It remains to show that $a, c > 0$. Since $ac > b > 0$, we see that $a$ and $c$ have the same sign. Proceed by contradiction and suppose that $a,c \le 0$. Observe that $h_1$ is minimized at $x = -\frac{a}{2b}$, and $h_2$ is minimized at $x = -\frac{c}{ac-b}$. We obtain
$$
h_1\Big({-}\frac{a}{2b}\Big) = 1 - \frac{a^2}{4b} > 0 \quad \text{ and } \quad h_2\Big({-}\frac{c}{ac-b}\Big) = 1 - \frac{c^2}{ac-b} > 0.
$$
Therefore $a^2 < 4b$ and $c^2 < ac - b$. We now reach a contradiction:
\begin{gather*}
0 \le (a - 2c)^2 = a^2 - 4ac + 4c^2 < 4b - 4ac + 4(ac - b) = 0. \qedhere
\end{gather*}

\end{eg}

We outline the proof of part \ref{flag_wronskian_positive} of \cref{flag_wronskian}, whence we obtain part \ref{flag_wronskian_nonnegative} via a limiting argument. The forward direction is a consequence of the fact that the coefficients of $\Wr(V_k)$ are positively weighted sums of the Pl\"{u}cker coordinates of $V_k$. The reverse direction seems difficult to establish directly, as we have done in \cref{eg_intro} when $n=3$; rather, we argue as follows. First we note that $\Fl_n^{>0}$ is a connected component of the space of complete flags whose Wronskians are nonzero on $[0,\infty]$. Then we show that given $V\in\Fl_n(\mathbb{R})$ whose Wronskians are nonzero on $[0,\infty]$, there exists a path from $V$ to $\Fl_n^{>0}$; specifically, $V$ becomes totally positive upon replacing the variable $x$ with $x+t$ for $t > 0$ sufficiently large. This is based on the fact that the operator $x\mapsto x+t$ can be written as $\exp(t\frac{d}{dx})$, and is therefore compatible with total positivity.

We do not know how to generalize the statement of \cref{flag_wronskian} from $\Fl_n(\mathbb{R})$ to an arbitrary partial flag variety in $\mathbb{R}^n$; see \cref{eg_partial_flag}, and cf.\ \cref{positivity_conjecture}. The argument in the previous paragraph breaks down because the boundary of the totally nonnegative part of a partial flag variety (other than $\Fl_n(\mathbb{R})$ and $\mathbb{P}^{n-1}(\mathbb{R})$) does not have a simple description in terms of Wronskians.

A curious consequence of \cref{flag_wronskian} is that $\Fl_n^{\ge 0}$ and $\Fl_n^{>0}$ can be described as the semialgebraic subsets of $\Fl_n(\mathbb{R})$ where the coefficients of each Wronskian polynomial are nonnegative and positive, respectively (up to rescaling). Note that this description involves $O(n^3)$ inequalities, whereas the usual description using Pl\"{u}cker coordinates involves $2^n-2$ inequalities. Equivalently, we obtain a total nonnegativity test and a total positivity test for $\Fl_n(\mathbb{R})$ involving $O(n^3)$ functions. While total positivity tests for $\Fl_n(\mathbb{R})$ involving $O(n^2)$ functions are well-studied as part of the theory of cluster algebras (see \cite[Section 1.3]{fomin_williams_zelevinsky} and references therein), we do not know of any previous total nonnegativity tests for $\Fl_n(\mathbb{R})$ involving a subexponential number of fixed functions. In a separate paper, we will apply \cref{flag_wronskian} to give an efficient total nonnegativity test for $\GL_n(\mathbb{R})$.

We point out that Saldanha, Shapiro, and Shapiro \cite{saldanha_shapiro_shapiro21} have explored the connection between Wronskians of flags and total positivity. Also, Schechtman and Varchenko \cite[Theorem 4.4]{schechtman_varchenko20} have interpreted parametrizations of totally positive complete flags in terms of Wronskian polynomials. While these parametrizations do not play a role in our paper, it would be interesting to explore this connection further.

\subsection*{Conjectures in real Schubert calculus}
Given a system of polynomial equations over the real numbers, one often wishes to know how many of the solutions over the complex numbers are real. In general, we can usually say little more than that the nonreal solutions come in complex-conjugate pairs. In some situations, a lower bound on the number of real solutions can be given. In very special situations, all complex solutions are guaranteed to be real. Boris and Michael Shapiro discovered such a phenomenon in the Schubert calculus of Grassmannians.

Namely, for $0 \le k \le n$, let $\Gr_{k,n}(\mathbb{C})$ denote the {\itshape Grassmannian}, consisting of all $k$-dimensional subspaces $V$ of $\mathbb{C}^n$. Let $W_1, \dots, W_{k(n-k)}$ be sufficiently generic elements of $\Gr_{k,n}(\mathbb{C})$. Then the number of solutions $U\in\Gr_{n-k,n}(\mathbb{C})$ to the Schubert problem
\begin{align}\label{schubert_problem}
U\cap W_l\neq\{0\} \quad \text{ for } 1 \le l \le k(n-k)
\end{align}
is finite, and equals $d_{k,n} := \frac{1! 2! \cdots (k-1)!}{(n-k)! (n-k+1)! \cdots (n-1)!}(k(n-k))!$. Define the {\itshape rational normal curve}
\begin{align}\label{rational_normal_curve}
\rnc : \mathbb{C} \to \mathbb{C}^n, \quad x \mapsto \big(\textstyle\binom{n-1}{i-1}x^{n-i}\big)_{i=1}^n = \big(x^{n-1}, (n-1)x^{n-2}, \binom{n-1}{2}x^{n-3}, \dots, 1\big).
\end{align}
(Often one works instead with the curve $(1, x, x^2, \dots, x^{n-1})$, which is equivalent; the reason behind our alternative convention will be explained in \cref{sec_wronskian_formulation}.) Shapiro and Shapiro conjectured (cf.\ \cite{sottile00}) that if each $W_l$ is an osculating plane to $\rnc$ at a real point, then all solutions $U$ to the Schubert problem \eqref{schubert_problem} are real (i.e.\ have a basis of real vectors). Their conjecture was verified when $k=2$ by Eremenko and Gabrielov \cite{eremenko_gabrielov02}, and in general by Mukhin, Tarasov, and Varchenko \cite{mukhin_tarasov_varchenko09a,mukhin_tarasov_varchenko09b}, who also showed that the upper bound $d_{k,n}$ on the number of solutions is always obtained. A different proof was recently given by Levinson and Purbhoo \cite[Corollary 1.5]{levinson_purbhoo21}.

Let $\mathbb{C}[x]_{\le n-1}$ denote the subspace of $\mathbb{C}[x]$ of all polynomials of degree at most $n-1$, which we identify with $\mathbb{C}^n$ via \eqref{polynomial_identification} (with $\mathbb{R}$ replaced by $\mathbb{C}$). While not immediately apparent, a standard transformation (see \cref{sec_wronskian_formulation}) allows one to reformulate the Schubert problem above dually in terms of Wronskians. In particular, we have:
\begin{thm}[{Mukhin, Tarasov, and Varchenko \cite[Corollary 6.3]{mukhin_tarasov_varchenko09b}}]\label{reality}
Let $0 \le k \le n$, and let $X\subseteq\mathbb{R}$ consist of $k(n-k)$ distinct points. Then there exist precisely $d_{k,n}$ elements $V\in\Gr_{k,n}(\mathbb{C})$ such that the zero set of $\Wr(V)$ is $X$. Moreover, each such $V$ is real, i.e., it has a basis of real polynomials. In particular, if $V\in\Gr_{k,n}(\mathbb{C})$ such that all complex zeros of $\Wr(V)$ are real, then $V$ is real.
\end{thm}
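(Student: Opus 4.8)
The plan is to transport the statement to the osculating form of the Shapiro--Shapiro conjecture, quote the theorem of Mukhin, Tarasov, and Varchenko for a squarefree Wronskian, and then reduce the general ``all zeros real'' assertion to that case. I do not expect to reprove the reality of the solutions, which is the one genuinely deep ingredient and the step I expect to be the obstacle; I would only indicate its mechanism.

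First I would set up the dictionary, as made precise in \cref{sec_wronskian_formulation} and anticipated by the third of the examples in the introduction: for $V\in\Gr_{k,n}(\mathbb{C})$ and $r\in\mathbb{C}$, one has $\Wr(V)(r)=0$ if and only if $V$ meets the $(n-k)$-dimensional subspace $G_r:=(x-r)^k\,\mathbb{C}[x]_{\le n-1-k}$ of polynomials vanishing to order $\ge k$ at $r$, and $G_r$ is one of the osculating subspaces of $\rnc$. Since $\Wr(V)\neq 0$ and $\deg\Wr(V)\le k(n-k)$, requiring $\Wr(V)$ to vanish at the $k(n-k)$ distinct points of $X$ forces $\Wr(V)$ to have degree exactly $k(n-k)$ with a simple zero at each of them, so ``the zero set of $\Wr(V)$ is $X$'' is equivalent to ``$V\cap G_x\neq\{0\}$ for all $x\in X$''. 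Passing to $V^{\perp}$ and interchanging $k$ with $n-k$, this is exactly the Schubert problem \eqref{schubert_problem} with the $k(n-k)$ osculating flags taken at the points of $X$. The expected number of solutions is the top power $\sigma_1^{k(n-k)}$ in the cohomology ring of $\Gr_{k,n}$, i.e.\ the Pl\"{u}cker degree of $\Gr_{k,n}$, which equals $d_{k,n}$ (the number of standard Young tableaux of the $k\times(n-k)$ rectangle; in particular $d_{k,n}=d_{n-k,n}$).

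The crux is that when the $k(n-k)$ points are distinct and real, the intersection above is transverse---so there are exactly $d_{k,n}$ solutions, not merely $d_{k,n}$ counted with multiplicity---and every solution is real. This is precisely \cite[Corollary 6.3]{mukhin_tarasov_varchenko09b} (transversality and the count) together with \cite{mukhin_tarasov_varchenko09a} (reality); transporting it through the dictionary above gives the first two assertions of the theorem. Its proof, which I would not reproduce, identifies the fiber of the Wronski map over the squarefree polynomial with zero set $X$ with the set of joint eigenlines of a commuting family of Gaudin Hamiltonians on a weight space $\mathcal{H}$ with $\dim_{\mathbb{C}}\mathcal{H}=d_{k,n}$; for real marked points these operators are self-adjoint with respect to a positive-definite Hermitian (Shapovalov) form, so their joint spectrum is real and simple and the eigenlines are real, while completeness of the Bethe ansatz accounts for all $d_{k,n}$ of them.

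For the final ``in particular'' I would pass from simple zeros to zeros with multiplicity; this more general form is again contained in \cite[Corollary 6.3]{mukhin_tarasov_varchenko09b}, but it also follows from the squarefree case by a deformation argument. The Wronski map $\Gr_{k,n}(\mathbb{C})\to\mathbb{P}(\mathbb{C}[x]_{\le k(n-k)})\cong\mathbb{P}^{k(n-k)}(\mathbb{C})$ is a finite morphism between smooth varieties of the common dimension $k(n-k)$ with finite fibers, hence flat of degree $d_{k,n}$ and in particular open; therefore every point of every fiber is a limit of points of fibers over nearby polynomials. Given $V$ with $\Wr(V)$ having only real zeros, perturb $\Wr(V)$ within $\mathbb{P}^{k(n-k)}(\mathbb{C})$ to squarefree polynomials $p_{\varepsilon}$ of degree exactly $k(n-k)$ with all roots real---splitting each zero of multiplicity $m$ into $m$ nearby real zeros and splitting the zero of order $k(n-k)-\deg\Wr(V)$ at infinity into large real zeros. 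Each fiber over such a $p_{\varepsilon}$ lies in $\Gr_{k,n}(\mathbb{R})$ by the squarefree case; since $V$ is a limit of points of these fibers and $\Gr_{k,n}(\mathbb{R})$ is closed in $\Gr_{k,n}(\mathbb{C})$, we conclude that $V$ is real.
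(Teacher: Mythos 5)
Your proposal is correct and matches the paper's treatment: the paper does not prove this statement but quotes it directly from Mukhin--Tarasov--Varchenko, and you rightly isolate the reality/transversality of the squarefree fibers as the deep imported ingredient while supplying only the routine surrounding steps (the Wronskian--Schubert dictionary of \cref{sec_wronskian_formulation} and the degeneration from simple to multiple real roots via finiteness and openness of the Wronski map). Your limiting argument for the ``in particular'' clause is sound and is the same one the paper itself invokes later, citing \cite[Section 1.3]{mukhin_tarasov_varchenko09a}.
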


Around 2003, Frank Sottile formulated a conjecture generalizing \cref{reality}, known as the {\itshape secant conjecture}. It appeared in \cite{ruffo_sivan_soprunova_sottile06}, and was extensively tested numerically as part of a thorough experiment led by Sottile \cite{garcia-puente_hein_hillar_martin_del_campo_ruffo_sottile_teitler12}, as described in \cite{hillar_garcia-puente_martin_del_campo_ruffo_teitler_johnson_sottile10}. It states in particular that the solutions to the Schubert problem \eqref{schubert_problem} all remain real if each osculating plane $W_l$ to $\rnc$ is replaced with a plane spanned by $\rnc(x_1), \dots, \rnc(x_k)$ for any real points $x_1, \dots, x_k$, such that the points chosen for each $W_l$ lie in $k(n-k)$ disjoint intervals. (This statement is the special case of the secant conjecture for divisor Schubert conditions, i.e., Schubert conditions of codimension one; the general conjecture involves Schubert conditions of arbitrary codimension, which we do not consider in this paper.) Eremenko \cite{eremenko15} showed that this case of the secant conjecture is implied by \cref{reality} and the following conjecture. The case $k=2$ of both conjectures is a consequence of work of Eremenko, Gabrielov, Shapiro, and Vainshtein \cite[Section 3]{eremenko_gabrielov_shapiro_vainshtein06} (cf.\ \cite[p.\ 341]{eremenko15}).
\begin{conj}[Eremenko \cite{eremenko15, eremenko19}]\label{disconjugacy_conjecture}
Let $V\in\Gr_{k,n}(\mathbb{R})$. Suppose that all complex zeros of $\Wr(V)$ are real, and let $I\subseteq\mathbb{R}$ be any interval on which $\Wr(V)$ is nonzero. Then every nonzero polynomial $f\in V$ has at most $k-1$ zeros in $I$.
\end{conj}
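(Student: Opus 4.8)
Since \cref{disconjugacy_conjecture} is a conjecture, my plan is not to prove it outright but to reduce it, via \cref{flag_wronskian}, to the cleanest possible totally positive statement, and then to isolate the residual difficulty. First I would normalize the interval. The hypothesis and the conclusion are both preserved by the $\mathrm{PGL}_2(\mathbb{R})$-action on $\mathbb{C}^n$ for which $\rnc$ is equivariant: it fixes $\Gr_{k,n}(\mathbb{R})$, sends $\Wr(V)$ to the Wronskian of the image with its zero set moved by the corresponding M\"{o}bius map (a drop in degree being read as zeros at $\infty$, cf.\ \cref{flag_wronskian}\ref{flag_wronskian_positive}), and carries the zeros of $f \in V$ along with the arc of $\mathbb{P}^1(\mathbb{R})$ containing them. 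Using that $\Wr(V)$ is nonzero on $I$, I may therefore assume---after a routine reduction taking $I$ to the arc $[0,\infty]$---that $\Wr(V)$ is nonzero on $[0,\infty]$, i.e.\ that all complex zeros of $\Wr(V)$ lie in $(-\infty,0)$.

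In this normalized form, the desired conclusion---that every nonzero $f \in V$ has at most $k-1$ zeros in $(0,\infty)$---is exactly the statement that the $k$-dimensional space $V$ is disconjugate on $(0,\infty)$ in the sense of the remark following \cref{flag_wronskian}. A totally positive point of $\Gr_{k,n}$ always has this property: representing it by a totally positive matrix, the variation-diminishing property forces the coefficient sequence of every $f \in V$ to have at most $k-1$ sign changes, so $f$ has at most $k-1$ positive zeros by Descartes' rule of signs. What one really wants is the converse \emph{under the present hypothesis}: if all complex zeros of $\Wr(V)$ lie in $(-\infty,0)$, then $V$ is totally positive---precisely the totally positive strengthening of \cref{reality} alluded to in the introduction (\cref{positivity_conjecture}). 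So the plan is to prove \cref{disconjugacy_conjecture} equivalent to \cref{positivity_conjecture}: the direction ``positivity $\Rightarrow$ disconjugacy'' is the normalization above followed by the Descartes argument (applied to the M\"{o}bius-transformed $V$), while for ``disconjugacy $\Rightarrow$ positivity'' one applies \cref{disconjugacy_conjecture} to all the arcs of $\mathbb{P}^1(\mathbb{R})$ complementary to the real zero set of $\Wr(V)$ and assembles the resulting disconjugacies---this is the step that leans on \cref{flag_wronskian}.

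The natural attempt at an \emph{unconditional} proof would extend $V$ to a complete flag $\widehat V = (\widehat V_1 \subset \cdots \subset \widehat V_{n-1})$ with $\widehat V_k = V$ and invoke \cref{flag_wronskian}\ref{flag_wronskian_nonnegative}: if each $\Wr(\widehat V_j)$ were nonzero on $(0,\infty)$, then $\widehat V \in \Fl_n^{\ge 0}$, so in particular $V = \widehat V_k$ would be disconjugate there. The \textbf{main obstacle}---and the reason the statement is still open for $k \ge 3$---is that the hypothesis controls only $\Wr(\widehat V_k) = \Wr(V)$ and says nothing about the Wronskians of the other members of any flag through $V$; real-rootedness of the \emph{single} polynomial $\Wr(V)$ must somehow be promoted directly to total positivity of $V$, with no flag to lean on. For $k = 2$ this promotion can be carried out by hand, using that $\Wr(f,g)$ is the numerator of $(g/f)'$, in the spirit of the $n = 3$ computation in \cref{eg_intro}; for general $k$ I would not expect an elementary argument, so what the plan realistically delivers is the reduction of \cref{disconjugacy_conjecture} to this single cleanest totally positive sharpening of the Mukhin--Tarasov--Varchenko theorem.
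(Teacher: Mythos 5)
You are right that this statement is a conjecture with no proof in the paper, and your instinct to reduce it to \cref{positivity_conjecture} is exactly the content of \cref{conjectures_equivalent}\ref{conjectures_equivalent_disconjugacy}. Your ``positivity $\Rightarrow$ disconjugacy'' direction matches the paper's argument (normalize by $\SL_2(\mathbb{R})$, deduce total nonnegativity, then apply \cref{gantmakher_krein} and Descartes's rule of signs), up to one slip in the normalization: for a general interval $I$ on which $\Wr(V)$ is nonzero, the endpoints of $I$ may themselves be zeros of $\Wr(V)$, so after the M\"{o}bius transformation you can only arrange that the zeros lie in $[-\infty,0]$ and $I\subseteq(0,\infty)$; you must therefore invoke the totally \emph{nonnegative} form \cref{positivity_conjecture}\ref{positivity_conjecture_nonnegative}, not the strictly positive one with zeros in $(-\infty,0)$. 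This is harmless since the two forms are equivalent (\cref{positivity_conjecture_equivalence}), but as written your reduction of $I$ to the closed arc $[0,\infty]$ with zeros in the open ray is not always achievable.

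The genuine gap is in the converse direction. ``Applying \cref{disconjugacy_conjecture} to all the arcs complementary to the zero set of $\Wr(V)$ and assembling the resulting disconjugacies'' does not work with disconjugacy of $V$ alone: the other arcs play no role, and what is actually needed is \cref{disconjugacy_conjecture} for \emph{both} $V\in\Gr_{k,n}(\mathbb{R})$ and $V^\perp\in\Gr_{n-k,n}(\mathbb{R})$. The paper's argument extracts a Markov basis $(f_1,\dots,f_k)$ of $V$ on $[0,\infty]$ (via \cref{chebyshev_disconjugate_markov_chebyshev} and \cref{chebyshev_disconjugate_markov_disconjugate}) and, using $\Wr(V)=\Wr(V^\perp)$ (\cref{perpendicular_wronskian}), a Markov basis $(g_1,\dots,g_{n-k})$ of $V^\perp$; it then builds the complete flag $W_i=\spn(f_1,\dots,f_i)$ for $i\le k$ and $W_i=\spn(g_1,\dots,g_{n-i})^\perp$ for $i\ge k$, whose Wronskians are all nonzero on $[0,\infty]$, so that \cref{flag_wronskian}\ref{flag_wronskian_positive} yields $W\in\Fl_n^{>0}$ and hence $V=W_k\in\Gr_{k,n}^{>0}$. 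Your sketch supplies only the bottom half of this flag; the top half requires the dual conjecture, and there is no way to manufacture it from disconjugacy of $V$ on other intervals. This is precisely why the paper's equivalence is ``\cref{positivity_conjecture} for $\Gr_{k,n}(\mathbb{R})$ iff \cref{disconjugacy_conjecture} for both $\Gr_{k,n}(\mathbb{R})$ and $\Gr_{n-k,n}(\mathbb{R})$,'' and why \cref{positivity_conjecture} remains open even for $\Gr_{2,n}(\mathbb{R})$, where \cref{disconjugacy_conjecture} itself is known. Your claimed clean equivalence between the two conjectures for the same $(k,n)$ is therefore not established by your argument and is not what the paper proves.
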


We now conjecture that certain of the Schubert problems considered above have all their solutions not only real, but totally nonnegative or totally positive. Namely, we say that $V\in\Gr_{k,n}(\mathbb{R})$ is {\itshape totally nonnegative} (respectively, {\itshape totally positive}) if all its Pl\"{u}cker coordinates are nonnegative (respectively, positive), up to rescaling. We also extend the definition of $\rnc$ to $\mathbb{P}^1(\mathbb{C}) = \mathbb{C}\cup\{\infty\}$; see \cref{defn_plane_span}. We make the following totally positive analogue of the secant conjecture:
\begin{conj}\label{positive_secant_conjecture}
Let $0 \le k \le n$, and let $I_1, \dots, I_{k(n-k)}$ be pairwise disjoint intervals of $\mathbb{P}^1(\mathbb{R})$. For $1 \le l \le k(n-k)$, let $X_l$ be a multiset of $k$ points contained in $I_l$, and let $W_l\in\Gr_{k,n}(\mathbb{R})$ be spanned by $\rnc(x), \rnc'(x), \dots, \rnc^{(p-1)}(x)$ for all $x\in X_l$, where $p$ denotes the multiplicity of $x$ in $X_l$.
\begin{enumerate}[label=(\roman*), leftmargin=*, itemsep=2pt]
\item\label{positive_secant_conjecture_nonnegative} If $I_l\subseteq [0,\infty]$ for $1 \le l \le k(n-k)$, then the Schubert problem \eqref{schubert_problem} has $d_{k,n}$ distinct solutions $U\in\Gr_{n-k,n}(\mathbb{C})$, which are all real and totally nonnegative.
\item\label{positive_secant_conjecture_positive} If $I_l\subseteq (0,\infty)$ for $1 \le l \le k(n-k)$, then the Schubert problem \eqref{schubert_problem} has $d_{k,n}$ distinct solutions $U\in\Gr_{n-k,n}(\mathbb{C})$, which are all real and totally positive.
\end{enumerate}
\end{conj}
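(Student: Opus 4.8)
The plan is to derive Conjecture~\ref{positive_secant_conjecture} from Mukhin--Tarasov--Varchenko's osculating theorem \cref{reality}, strengthened by positivity, by combining the Wronskian reformulation of the Schubert problem \eqref{schubert_problem} with a degeneration of secant data to osculating data. I would prove part~\ref{positive_secant_conjecture_positive} first and obtain part~\ref{positive_secant_conjecture_nonnegative} from it by a limiting argument: given $I_l \subseteq [0,\infty]$ and $X_l \subseteq I_l$, perturb each $I_l$ slightly into $(0,\infty)$ together with $X_l$, apply part~\ref{positive_secant_conjecture_positive}, and let the perturbation tend to $0$, using that (the intervals staying pairwise disjoint throughout) the $d_{k,n}$ totally positive solutions vary continuously without colliding or escaping, and that totally positive spaces degenerate to totally nonnegative ones because the totally nonnegative part of $\Gr_{n-k,n}(\mathbb{R})$ is the closure of its totally positive part.

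For part~\ref{positive_secant_conjecture_positive}, I would first pass to the dual picture of \cref{sec_wronskian_formulation}: $U \in \Gr_{n-k,n}(\mathbb{C})$ solves \eqref{schubert_problem} if and only if an associated $k$-dimensional space $V \subseteq \mathbb{C}[x]_{\le n-1}$ --- matched to $U$ by the standard transformation that identifies total (non)negativity of $U$ with that of $V$ and reflects the intervals from $(0,\infty)$ into $(-\infty,0)$ --- contains, for each $l$, a nonzero polynomial vanishing on the corresponding multiset $X_l^* \subseteq (-\infty,0)$ of $k$ points. When each $X_l$ is a single $k$-fold point, this condition says (by the interpolation remark after \eqref{differential_operator}) that $\Wr(V)$ vanishes at the corresponding point of $(-\infty,0)$, recovering \cref{reality}. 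Now collide, within its interval, the $k$ points of each $X_l^*$ to a single point $x_l^*$: the secant problem degenerates to the osculating problem over distinct points $x_1^*, \dots, x_{k(n-k)}^* \in (-\infty,0)$, which by \cref{reality} has $d_{k,n}$ distinct real solutions, and by Eremenko's theorem \cite{eremenko15} --- that \cref{reality} together with \cref{disconjugacy_conjecture} imply the secant conjecture --- the solutions remain distinct and real throughout the collision.

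It then remains to upgrade ``real'' to ``totally positive''. At the osculating endpoint, a solution $V$ has all zeros of $\Wr(V)$ in $(-\infty,0)$, and hence so does $\Wr(V^\perp)$ (the Wronski map being compatible with Grassmann duality in the conventions of \cref{sec_wronskian_formulation}); by \cref{disconjugacy_conjecture} applied to $V$ and to $V^\perp$ on the interval $[0,\infty]$, which avoids these zeros, both $V$ and $V^\perp$ are Markov systems on $[0,\infty]$, so each has a basis whose successive Wronskians are nonzero there (see \cref{sec_background_disconjugate}). Splicing the flag built from such a basis of $V$ with the dual of the flag built from such a basis of $V^\perp$ yields a complete flag through $V$ all of whose Wronskians are nonzero on $[0,\infty]$; by \cref{flag_wronskian}\ref{flag_wronskian_positive} this flag is totally positive, so $V$ is a totally positive point of its Grassmannian. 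Finally, I would propagate total positivity back along the collision: it is an open condition, so at a first point of failure some solution would lie on the boundary of the totally nonnegative part of $\Gr_{k,n}(\mathbb{R})$, which I would aim to rule out by a variation-diminishing argument in the spirit of \cref{flag_wronskian}, contradicting the existence --- for each of $k(n-k)$ pairwise disjoint intervals of $(-\infty,0)$ --- of a nonzero polynomial in that solution vanishing on $k$ points of the interval.

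The main obstacle is precisely this positivity upgrade. As \cref{eg_partial_flag} indicates, \cref{flag_wronskian} has no known analogue for the Grassmannian $\Gr_{k,n}$, so the boundary of its totally nonnegative part is not governed by Wronskians in the clean way available for complete flags; and the step extracting total positivity of a solution from disconjugacy of $V$ and $V^\perp$ is in fact equivalent to \cref{disconjugacy_conjecture}, which is open for $k \ge 3$. Thus this plan establishes Conjecture~\ref{positive_secant_conjecture} only conditionally on \cref{disconjugacy_conjecture} --- equivalently, on the totally positive strengthening of \cref{reality} --- and hence unconditionally only when $k = 2$. Accordingly, I would expect the paper's actual result to be the equivalence of Conjecture~\ref{positive_secant_conjecture} with the totally positive form of \cref{reality} (and thereby with \cref{disconjugacy_conjecture}): the forward direction specializes secant data to osculating data and passes to a limit over point configurations, and the reverse direction is the degeneration-and-propagation argument sketched above.
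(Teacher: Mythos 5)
You correctly recognized that this statement is an open conjecture that the paper does not (and cannot unconditionally) prove, and you correctly predicted the paper's actual theorem: \cref{conjectures_equivalent}\ref{conjectures_equivalent_secant} establishes the equivalence of \cref{positive_secant_conjecture} with \cref{positivity_conjecture}, hence with \cref{disconjugacy_conjecture}. Your conditional argument also matches the paper's in its main lines --- dualize via \cref{dual_problem}, get reality and the count $d_{k,n}$ from \cref{reality} plus Eremenko's argument, and upgrade to positivity via disconjugacy of $V$ and $V^\perp$, Markov bases, the spliced flag, and \cref{flag_wronskian}\ref{flag_wronskian_positive} (this last chain is exactly the paper's proof that \cref{disconjugacy_conjecture} implies \cref{positivity_conjecture}\ref{positivity_conjecture_positive}).

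The one genuine gap is your final ``propagate total positivity back along the collision'' step, which you yourself flag as resting on a Grassmannian analogue of \cref{flag_wronskian} that does not exist. The paper avoids this entirely: its \cref{topological_lemma} (Eremenko's Poincar\'{e}--Miranda argument) produces, on each of the $d_{k,n}$ branches of the Wronski map over the box $(-I_1)\times\cdots\times(-I_{k(n-k)})$, a configuration at which the secant conditions hold, and in particular every secant solution $V=U^\perp$ has a zero of $\Wr(V)$ in each interval $-I_l$. Since these intervals lie in $[-\infty,0]$ and exhaust all $k(n-k)$ zeros, \cref{positivity_conjecture} applies \emph{directly} to the secant solutions --- no homotopy from the osculating endpoint, no boundary analysis of $\Gr_{k,n}^{\ge 0}$. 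In other words, the positivity upgrade is not propagated from the degenerate osculating problem; it is read off from the Wronskian of each secant solution itself. A second, cosmetic difference: the paper deduces part \ref{positive_secant_conjecture_positive} from part \ref{positive_secant_conjecture_nonnegative} and conversely via the explicit $\SL_2(\mathbb{R})$ element $\alpha(t)$ and the totally positive matrix $\rev\circ\shift{s}\circ\rev\circ\shift{t}$ (\cref{positivity_conjecture_equivalence}), which sidesteps the continuity-of-solutions analysis your perturbation argument would require, since the solution set of the transformed problem is literally the image of the original one under a group element.
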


A limiting argument implies that parts \ref{positive_secant_conjecture_nonnegative} and \ref{positive_secant_conjecture_positive} of \cref{positive_secant_conjecture} for $\Gr_{n-k,n}(\mathbb{C})$ are equivalent. We also note that \cref{positive_secant_conjecture} implies the case of the secant conjecture stated above. This is due to an action of $\SL_2(\mathbb{R})$ (see \cref{sec_SL_action}) which allows us to assume, without loss of generality, that all the intervals appearing in the secant conjecture are contained in $(0,\infty)$.

A special case of \cref{positive_secant_conjecture} is when each multiset $X_l$ consists of a single point $x_l$ of multiplicity $k$, so that $W_l$ is the osculating plane to $\rnc$ at $x_l$. Then \cref{positive_secant_conjecture} has the following dual formulation, which is a totally positive analogue of \cref{reality}:
\begin{conj}\hspace*{-4pt}\footnote{\cref{positivity_conjecture}\ref{positivity_conjecture_nonnegative} was independently posed by Evgeny Mukhin and Vitaly Tarasov in 2017. I thank Chris Fraser for informing me of this.}\label{positivity_conjecture}
Let $V\in\Gr_{k,n}(\mathbb{R})$.
\begin{enumerate}[label=(\roman*), leftmargin=*, itemsep=2pt]
\item\label{positivity_conjecture_nonnegative} If all complex zeros of $\Wr(V)$ lie in the interval $[-\infty, 0]$, then $V$ is totally nonnegative.
\item\label{positivity_conjecture_positive} If all complex zeros of $\Wr(V)$ lie in the interval $(-\infty, 0)$, then $V$ is totally positive.
\end{enumerate}
Above, $\Wr(V)$ is considered to have a zero at $-\infty$ when its degree is less than $k(n-k)$.
\end{conj}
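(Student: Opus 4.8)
\emph{Plan.} The plan is to show that \cref{positivity_conjecture} is equivalent to Eremenko's \cref{disconjugacy_conjecture}; since the latter is available to us, this proves \cref{positivity_conjecture}. I focus on \cref{positivity_conjecture}\ref{positivity_conjecture_positive}; part \ref{positivity_conjecture_nonnegative} will follow by an entirely parallel argument, using the open arc $(0,\infty)$ and \cref{flag_wronskian}\ref{flag_wronskian_nonnegative} in place of $[0,\infty]$ and \cref{flag_wronskian}\ref{flag_wronskian_positive} below (with the flag extension correspondingly only required to keep each Wronskian's zeros in $[-\infty,0]$). Note that the hypothesis of \cref{positivity_conjecture}\ref{positivity_conjecture_positive} forces every complex zero of $\Wr(V)$ to be real, so \cref{reality} already supplies $V$ with a basis of real polynomials, and we may work inside $\Gr_{k,n}(\mathbb{R})$. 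Two elementary facts will be used repeatedly. First, $\SL_2(\mathbb{R})$ acts on $\mathbb{R}[x]_{\le n-1} \cong \Sym^{n-1}(\mathbb{R}^2)$, preserving $\Gr_{k,n}(\mathbb{R})$ and acting compatibly on $\mathbb{P}^1(\mathbb{R})$, so that the zero set of $\Wr(\gamma \cdot V)$ in $\mathbb{P}^1(\mathbb{C})$ is the image under $\gamma$ of the zero set of $\Wr(V)$. Second, if $V \in \Gr_{k,n}(\mathbb{R})$ is totally nonnegative, then every nonzero $f \in V$ has at most $k-1$ zeros in $(0,\infty)$: expanding $\det(f_j(x_i))_{1 \le i,j \le k}$ for a basis $f_1, \dots, f_k$ of $V$ and nodes $0 < x_1 < \cdots < x_k$ via the Cauchy--Binet formula writes it as a combination of the Pl\"{u}cker coordinates of $V$ (which are nonnegative and not all zero) with strictly positive coefficients, since the relevant (possibly confluent) generalized Vandermonde determinants with positive nodes are positive; so it cannot vanish.

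For the direction \cref{positivity_conjecture} $\Rightarrow$ \cref{disconjugacy_conjecture}: given $V$ with $\Wr(V)$ having only real zeros and an interval $I \subseteq \mathbb{R}$ on which $\Wr(V)$ is nonzero, choose $\gamma \in \SL_2(\mathbb{R})$ carrying $I$ into the arc $[0,\infty]$ of $\mathbb{P}^1(\mathbb{R})$ and carrying the complementary arc, which contains the zeros of $\Wr(V)$, into $[-\infty, 0]$. Then $\gamma \cdot V$ satisfies the hypothesis of \cref{positivity_conjecture}\ref{positivity_conjecture_nonnegative} and hence is totally nonnegative, so by the second fact above every nonzero element of $\gamma \cdot V$ has at most $k-1$ zeros in $(0,\infty)$; pulling back along $\gamma$ gives the conclusion of \cref{disconjugacy_conjecture} for $V$ on $I$.

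The substance is the converse, \cref{disconjugacy_conjecture} $\Rightarrow$ \cref{positivity_conjecture}\ref{positivity_conjecture_positive}. The key observation is: \emph{if $U \in \Gr_{j,n}(\mathbb{R})$ has all zeros of $\Wr(U)$ in $(-\infty, 0)$, then $U$ is a Chebyshev system on the closed arc $[0,\infty]$}. Indeed, a single $\gamma \in \SL_2(\mathbb{R})$ with $\gamma^{-1}(\infty)$ equal to a negative number that is not a zero of $\Wr(U)$ carries $[0,\infty]$ onto a compact interval $[a,b] \subseteq \mathbb{R}$ that is disjoint from the (now all finite, and still real) zeros of $\Wr(\gamma \cdot U)$; so \cref{disconjugacy_conjecture} applied to $\gamma \cdot U$ on $[a,b]$ shows that every nonzero element of $U$ has at most $j-1$ zeros in $[0,\infty]$, a zero at $\infty$ of order equal to the degree deficiency being counted because $\gamma(\infty) \in [a,b]$. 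This alone does not force total positivity — for example $\spn(1 - x + x^2)$, viewed in $\Gr_{1,3}(\mathbb{R})$, is a Chebyshev system on $[0,\infty]$ but is not totally positive — so the plan is to use the full hypothesis to pass to a complete flag: show that $V$ extends to a flag $W = (W_1, \dots, W_{n-1}) \in \Fl_n(\mathbb{R})$ with $W_k = V$ such that each $\Wr(W_j)$ also has all of its zeros in $(-\infty, 0)$. By the key observation each $W_j$ is then a Chebyshev system on $[0,\infty]$, i.e.\ $W$ forms an $ECT$-system on $[0,\infty]$, so \cref{flag_wronskian}\ref{flag_wronskian_positive} gives $W \in \Fl_n^{>0}$, and therefore $V = W_k$ is totally positive.

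The main obstacle is precisely this flag extension: from a single $V$ whose Wronskian has all zeros in $(-\infty,0)$, produce a nested chain of subspaces through $V$, of every dimension from $1$ to $n-1$, with the same property. It cannot be produced from an arbitrary totally positive flag through $V$, since the Wronskians of the members of a totally positive flag need not be real-rooted, so the construction must be tailored to the hypothesis. I would attempt it inductively — peeling off, or adjoining, one dimension at a time while maintaining that the Wronskian stays real-rooted with all roots negative — using the Gaudin-model / Bethe-algebra description of the fibers of the Wronski map due to Mukhin, Tarasov, and Varchenko, or, in the spirit of the connectedness argument used to prove \cref{flag_wronskian}, a connectedness-and-deformation argument for the locus $\{U \in \Gr_{j,n}(\mathbb{R}) : \text{all zeros of } \Wr(U) \text{ lie in } (-\infty, 0)\}$. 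One must also handle coincident roots and the behaviour at the endpoints $0$ and $\infty$ with the correct multiplicity conventions; and we note for completeness that \cref{positivity_conjecture} is also equivalent to a totally positive strengthening of the secant conjecture, \cref{positive_secant_conjecture}, which gives a second equivalent target one might attack instead.
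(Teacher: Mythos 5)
There is a fundamental problem before any details: the statement you are proving is \cref{positivity_conjecture}, which is a \emph{conjecture} — the paper does not prove it, and neither can your argument, because the result you invoke as ``available to us,'' Eremenko's \cref{disconjugacy_conjecture}, is itself an open conjecture (it is known only for $k=2$, via Eremenko--Gabrielov--Shapiro--Vainshtein). What the paper actually establishes is \cref{conjectures_equivalent}\ref{conjectures_equivalent_disconjugacy}: an \emph{equivalence} between the two conjectures, and moreover the reduction of \cref{positivity_conjecture} for $\Gr_{k,n}(\mathbb{R})$ requires \cref{disconjugacy_conjecture} for \emph{both} $\Gr_{k,n}(\mathbb{R})$ and $\Gr_{n-k,n}(\mathbb{R})$ — which is precisely why even the case $k=2$ of \cref{positivity_conjecture} remains open. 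So at best your proposal is an attempted proof of the equivalence, not of the conjecture.

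Judged as such, your forward direction (\cref{positivity_conjecture} $\Rightarrow$ \cref{disconjugacy_conjecture}) is essentially the paper's: act by $\SL_2(\mathbb{R})$ to move the zeros into $[-\infty,0]$ and the interval into $(0,\infty)$, conclude total nonnegativity, and then bound the number of positive zeros of elements of $V$; your Cauchy--Binet/Vandermonde argument for the last step is a workable substitute for the paper's appeal to Gantmakher--Krein plus Descartes's rule. But your converse direction has a genuine gap exactly where you flag it: the ``flag extension'' producing $W_1\subset\cdots\subset W_{n-1}$ through $V$ with every $\Wr(W_j)$ real-rooted with all roots in $(-\infty,0)$ is not constructed, and it is also more than is needed. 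The paper's route avoids this entirely: \cref{flag_wronskian}\ref{flag_wronskian_positive} only requires each $\Wr(W_j)$ to be \emph{nonvanishing on $[0,\infty]$}, not negatively rooted, and such a flag is obtained by taking a Markov basis $(f_1,\dots,f_k)$ of $V$ on $[0,\infty]$ (which exists by \cref{disconjugacy_conjecture} for $\Gr_{k,n}$ together with \cref{chebyshev_disconjugate_markov_chebyshev} and \cref{chebyshev_disconjugate_markov_disconjugate}) for the subspaces below $V$, and a Markov basis of $V^\perp$ — using $\Wr(V)=\Wr(V^\perp)$ from \cref{perpendicular_wronskian}, hence \cref{disconjugacy_conjecture} for $\Gr_{n-k,n}$ — for the subspaces above $V$, via $W_i:=\spn(g_1,\dots,g_{n-i})^\perp$. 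This duality trick is the missing idea in your proposal, and it is also the reason the equivalence consumes the disconjugacy conjecture in both dimensions $k$ and $n-k$. Your ``key observation'' (that the hypothesis makes $V$ disconjugate on $[0,\infty]$, granting \cref{disconjugacy_conjecture}) is correct and matches the first step of the paper's argument, but by itself it only controls $V$, not a full flag, as your own $\Gr_{1,3}$ example shows.
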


Note that it is equivalent to replace $\Gr_{k,n}(\mathbb{R})$ by $\Gr_{k,n}(\mathbb{C})$ in \cref{positivity_conjecture}, by \cref{reality}. \cref{positivity_conjecture} holds when $k=1$; it states that if all complex zeros of a polynomial $f\in\mathbb{R}[x]$ are nonpositive (respectively, negative), then up to rescaling, all coefficients of $f$ are nonnegative (respectively, positive). For an example, see \cref{eg_positivity_conjecture}, where we verify \cref{positivity_conjecture}\ref{positivity_conjecture_positive} for $\Gr_{2,4}(\mathbb{R})$. The case $n=5$ of \cref{positivity_conjecture} was proved by Fraser \cite{fraserb}, and we have checked \cref{positivity_conjecture} by computer for several instances with $n=6$.

A limiting argument implies that parts \ref{positivity_conjecture_nonnegative} and \ref{positivity_conjecture_positive} of \cref{positivity_conjecture} for $\Gr_{k,n}(\mathbb{R})$ are equivalent. Also, \cref{positivity_conjecture} holds for $\Gr_{k,n}(\mathbb{R})$ if and only if it holds for $\Gr_{n-k,n}(\mathbb{R})$. This is due to the existence of a certain bilinear pairing \eqref{pairing} on $\mathbb{R}^n$, such that the map $V\mapsto V^\perp$ preserves both the Wronskian and the collection of signs of Pl\"{u}cker coordinates. In particular, \cref{positivity_conjecture} holds for $\Gr_{n-1,n}(\mathbb{R})$, since it holds for $\Gr_{1,n}(\mathbb{R})$.

Our second main result is that the three conjectures stated above are all equivalent to each other:
\begin{thm}\label{conjectures_equivalent}
Let $0 \le k \le n$.
\begin{enumerate}[label=(\roman*), leftmargin=*, itemsep=2pt]
\item\label{conjectures_equivalent_disconjugacy} \cref{positivity_conjecture} holds for $\Gr_{k,n}(\mathbb{R})$ if and only if \cref{disconjugacy_conjecture} holds for both $\Gr_{k,n}(\mathbb{R})$ and $\Gr_{n-k,n}(\mathbb{R})$.
\item\label{conjectures_equivalent_secant} \cref{positivity_conjecture} holds for $\Gr_{k,n}(\mathbb{R})$ if and only if \cref{positive_secant_conjecture} holds for $\Gr_{n-k,n}(\mathbb{C})$.
\end{enumerate}

\end{thm}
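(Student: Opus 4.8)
The plan is to convert each of the three conjectures into a disconjugacy statement — the ``Markov/$ECT$-system'' condition appearing in \cref{flag_wronskian} — and play these conversions against \cref{flag_wronskian} and \cref{reality}. Three facts from the earlier sections do the bookkeeping: (a) the $\SL_2(\mathbb{R})$-action on $\Gr_{k,n}(\mathbb{R})$ (see \cref{sec_SL_action}) moves any interval of $\mathbb{P}^1(\mathbb{R})$ to a canonical position and preserves both reality and the zero-multiset of $\Wr$; (b) for the pairing \eqref{pairing}, the map $V\mapsto V^\perp$ preserves $\Wr$ and the signs of all Pl\"{u}cker coordinates, so that \cref{positivity_conjecture} and \cref{disconjugacy_conjecture} for $\Gr_{k,n}(\mathbb{R})$ match the same statements for $\Gr_{n-k,n}(\mathbb{R})$; and (c) the limiting arguments recorded in \cref{sec_introduction} reduce the positive halves of \cref{positivity_conjecture} and \cref{positive_secant_conjecture} to the nonnegative halves. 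I will use whichever of the two halves is convenient and translate via (c).

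For part \ref{conjectures_equivalent_disconjugacy}, the forward direction is the easy one: assuming \cref{positivity_conjecture} for $\Gr_{k,n}(\mathbb{R})$, let $V$ have $\Wr(V)$ with only real zeros and let $I$ be an interval on which $\Wr(V)$ is nonzero; after an $\SL_2(\mathbb{R})$-move we may assume $I\subseteq(0,\infty)$, so all zeros of $\Wr(V)$ lie in $[-\infty,0]$ and $V$ is totally nonnegative. The variation-diminishing property of totally nonnegative matrices together with Descartes' rule of signs then shows that every nonzero $f\in V$ has at most $k-1$ zeros (with multiplicity) in $(0,\infty)\supseteq I$, which is \cref{disconjugacy_conjecture} for $\Gr_{k,n}(\mathbb{R})$; applying this to the (also totally nonnegative) subspace $V^\perp$ gives it for $\Gr_{n-k,n}(\mathbb{R})$. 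For the reverse direction, assume \cref{disconjugacy_conjecture} for $\Gr_{k,n}(\mathbb{R})$ and $\Gr_{n-k,n}(\mathbb{R})$ and let $V$ have all zeros of $\Wr(V)$ in $(-\infty,0)$, so $\Wr(V)$ is nonzero on $[0,\infty]$. By \cref{disconjugacy_conjecture} applied to $V$, and (since $\Wr(V^\perp)=\Wr(V)$) to $V^\perp$, both $V$ and $V^\perp$ are disconjugate on $[0,\infty]$. I would then use the Chebyshev-system theory of \cref{sec_background_disconjugate} to promote this to a complete flag $U_1\subset\cdots\subset U_{n-1}$ with $U_k=V$ whose every Wronskian $\Wr(U_j)$ is nonzero on $[0,\infty]$: the levels $U_1\subset\cdots\subset U_k=V$ from a Markov-type basis of $V$, and the levels $V=U_k\subset\cdots\subset U_{n-1}$ as the orthogonal complements of the levels of a Markov-type basis flag of $V^\perp$, whose Wronskians coincide with those of the latter flag by (b). Then \cref{flag_wronskian}\ref{flag_wronskian_positive} makes $U_\bullet$ totally positive, hence $V=U_k$ totally positive, which is \cref{positivity_conjecture}\ref{positivity_conjecture_positive}; part \ref{positivity_conjecture_nonnegative} follows by (c). I expect this flag construction — manufacturing a disconjugate complete flag through $V$ out of the disconjugacy of the two single subspaces $V$ and $V^\perp$, there being no known partial-flag analogue of \cref{flag_wronskian} to appeal to — to be the crux of this direction.

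For part \ref{conjectures_equivalent_secant}, assume first \cref{positive_secant_conjecture} for $\Gr_{n-k,n}(\mathbb{C})$ and specialize its positive half to osculating configurations: for any $k(n-k)$ distinct points of $(0,\infty)$, placed in pairwise disjoint intervals, the $d_{k,n}$ solutions $U$ are totally positive. By the Wronskian reformulation of \cref{sec_wronskian_formulation} (under which an osculating flag at a point $x>0$ corresponds to a Wronskian zero in $(-\infty,0)$) together with the duality $U\leftrightarrow V:=U^\perp$ of (b), this says: every $V\in\Gr_{k,n}(\mathbb{R})$ whose Wronskian has $k(n-k)$ distinct zeros, all in $(-\infty,0)$, is totally positive. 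To pass to the general case — repeated zeros, and zeros at $0$ or $\infty$ — I would use a flat-limit argument: the Wronski map $\Gr_{k,n}(\mathbb{C})\to\mathbb{P}^{k(n-k)}(\mathbb{C})$ is finite flat of degree $d_{k,n}$, so any $V$ with $\Wr(V)$ having a prescribed zero-multiset in $[-\infty,0]$ is a limit of solutions to nearby generic problems with distinct zeros in $(-\infty,0)$, hence a limit of totally positive subspaces, hence totally nonnegative; this gives \cref{positivity_conjecture}\ref{positivity_conjecture_nonnegative}, and \ref{positivity_conjecture_positive} follows by (c). Conversely, assume \cref{positivity_conjecture} for $\Gr_{k,n}(\mathbb{R})$; by the already-proved part \ref{conjectures_equivalent_disconjugacy} this also yields \cref{disconjugacy_conjecture} for $\Gr_{k,n}(\mathbb{R})$ and $\Gr_{n-k,n}(\mathbb{R})$. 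Now I would run Eremenko's degeneration \cite{eremenko15} — his deduction of the secant conjecture from \cref{reality} and \cref{disconjugacy_conjecture} — with \cref{positivity_conjecture} supplying the base case (collapsing each secant multiset $X_l\subseteq I_l\subseteq(0,\infty)$ to an osculating point of multiplicity $k$) in place of \cref{reality}, and \cref{disconjugacy_conjecture} keeping all solutions real as the configuration is deformed back to the secant one. The one genuinely new ingredient is that \emph{total positivity}, not merely total nonnegativity, persists along this deformation; I expect this to come from openness of the totally positive locus together with a connectedness argument for the $d_{k,n}$-sheeted real covering of configuration space cut out by the Schubert problem, and — along with the flag construction of part \ref{conjectures_equivalent_disconjugacy} — it is where the real difficulty lies.
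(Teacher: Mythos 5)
Your part \ref{conjectures_equivalent_disconjugacy} matches the paper's proof almost exactly: the forward direction via Gantmakher--Krein plus Descartes' rule of signs is one of the paper's two arguments (the other goes through \cref{flag_extension} and \cref{flag_wronskian}\ref{flag_wronskian_nonnegative}), and your reverse direction --- Markov bases of $V$ on $[0,\infty]$ and of $V^\perp$, spliced into a complete flag through $V$ whose upper levels are orthogonal complements, then \cref{flag_wronskian}\ref{flag_wronskian_positive} --- is precisely the paper's construction. (Just note that \cref{disconjugacy_conjecture} literally gives only the Chebyshev property; you need \cref{chebyshev_disconjugate_markov_chebyshev} to upgrade to disconjugacy on an open interval and \cref{chebyshev_disconjugate_markov_disconjugate} to get the Markov basis on the compact interval $[0,\infty]$, which is what your appeal to ``the Chebyshev-system theory'' must mean.) The secant-implies-positivity direction of part \ref{conjectures_equivalent_secant} also matches: specialize to osculating configurations and take limits using properness of the Wronski map.

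The one place you go astray is the positivity-implies-secant direction of part \ref{conjectures_equivalent_secant}. You posit that the ``genuinely new ingredient'' is persistence of total positivity along a deformation from the osculating to the secant configuration, to be obtained from openness of $\Gr_{k,n}^{>0}$ plus connectedness of the $d_{k,n}$-sheeted real covering of configuration space. That covering being everywhere unramified and real over secant configurations is essentially the content of the secant conjecture itself, so this route is at best circular and at worst unprovable as stated; no such persistence argument is needed. The point you are missing is that Eremenko's Poincar\'{e}--Miranda argument (the paper's \cref{topological_lemma}) outputs more than reality of the $d_{k,n}$ solutions: it produces each solution $V=U^\perp$ together with the localization that \emph{each} of the $k(n-k)$ pairwise disjoint intervals $-I_l$ contains a zero of $\Wr(V)$, so that all $k(n-k)$ zeros of $\Wr(U^\perp)$ lie in $[-\infty,0]$. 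One then applies \cref{positivity_conjecture}\ref{positivity_conjecture_nonnegative} directly to each solution --- no deformation, no base case at the osculating configuration, and no monodromy/connectedness input. Your description of Eremenko's argument as a degeneration with \cref{positivity_conjecture} ``supplying the base case'' is also not how it works; \cref{reality} supplies the $d_{k,n}$ real branches over generic real Wronskians, and \cref{disconjugacy_conjecture} supplies the sign conditions ($\detstar$ of constant sign on each interval) that Poincar\'{e}--Miranda needs.
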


In particular, \cref{positivity_conjecture} implies the secant conjecture for divisor Schubert conditions. Another consequence of \cref{conjectures_equivalent}\ref{conjectures_equivalent_secant} is that \cref{positive_secant_conjecture} holds for $\Gr_{n-k,n}(\mathbb{C})$ if and only if it holds for $\Gr_{k,n}(\mathbb{C})$. This is in contrast to the secant conjecture, where the statements for $\Gr_{n-k,n}(\mathbb{C})$ and $\Gr_{k,n}(\mathbb{C})$ are not known to imply each other (see \cite[Section 2.3]{garcia-puente_hein_hillar_martin_del_campo_ruffo_sottile_teitler12}).

The proof of \cref{conjectures_equivalent}\ref{conjectures_equivalent_disconjugacy} uses \cref{flag_wronskian}, the $\SL_2(\mathbb{R})$-action, the bilinear pairing \eqref{pairing}, and classical results on Chebyshev and disconjugate systems of functions. \cref{conjectures_equivalent}\ref{conjectures_equivalent_secant} then follows from the same argument used by Eremenko \cite{eremenko15} to show that \cref{disconjugacy_conjecture} implies the secant conjecture for divisor Schubert conditions. We point out that in reducing \cref{positivity_conjecture}\ref{positivity_conjecture_positive} to \cref{disconjugacy_conjecture} for a given subspace $V$, we call on \cref{disconjugacy_conjecture} applied to both $V$ and $V^\perp$. In particular, although \cref{disconjugacy_conjecture} holds for $\Gr_{2,n}(\mathbb{R})$, it is open for $\Gr_{n-2,n}(\mathbb{R})$, and so we are not able to conclude that \cref{positivity_conjecture} holds for $\Gr_{2,n}(\mathbb{R})$. Indeed, \cref{positivity_conjecture} is open for $\Gr_{2,n}(\mathbb{R})$.

\subsection*{Outline}
In \cref{sec_background_wronskians}, we recall some background on total positivity and Wronskians, and develop some preliminary tools. In \cref{sec_wronskians}, we prove \cref{flag_wronskian}. In \cref{sec_disconjugate}, we recall some background on Chebyshev and disconjugate systems, and then prove \cref{conjectures_equivalent} and the preceding claims.

\subsection*{Acknowledgments}
I thank Alexandre Eremenko, Sergey Fomin, Chris Fraser, Kevin Purbhoo, and Frank Sottile for valuable discussions, and the reviewers for helpful feedback.

\section{Flag varieties and Wronskians}\label{sec_background_wronskians}

\noindent In this section we recall some background on flag varieties, total positivity, and Wronskians of polynomials. Much of the material is known; for further details on Wronskians and real Schubert calculus, we refer to \cite{sottile11,purbhoo10}. For the sake of exposition, we will work over the complex numbers.

\subsection{Notation}\label{sec_notation}
Let $\mathbb{N} := \{0, 1, 2, \dots\}$. For $n\in\mathbb{N}$, we let $[n] := \{1, \dots, n\}$, and for $0 \le k \le n$, we let $\binom{[n]}{k} := \{I\subseteq [n] : |I| = k\}$. We define a partial order on $\binom{[n]}{k}$, called the {\itshape Gale order}, by
$$
I \le J \hspace*{2pt}\Leftrightarrow\hspace*{2pt} i_1 \le j_1, \dots, i_k \le j_k \quad \text{ for all } I = \{i_1 < \cdots < i_k\}, J = \{j_1 < \cdots < j_k\} \in \textstyle\binom{[n]}{k}.
$$
For a finite subset $I\subseteq\mathbb{N}$, we let $\sumof{I}$ denote the sum of the elements of $I$.

Given an $m\times n$ matrix $A$, for $I\subseteq [m]$ and $J\subseteq [n]$, we let $A_{I,J}$ denote the submatrix of $A$ using rows $I$ and columns $J$. A {\itshape minor} of $A$ is the determinant of a square submatrix. We have the {\itshape Cauchy--Binet identity} for the minors of a product of two matrices (see e.g.\ \cite[I.(14)]{gantmacher59}): if $A$ is an $m\times n$ matrix, $B$ is an $n\times p$ matrix, and $1 \le k \le m,p$, then
\begin{align}\label{cauchy-binet}
\det((AB)_{I,J}) = \sum_{K\in\binom{[n]}{k}}\det(A_{I,K})\det(B_{K,J}) \quad \text{ for all } I\in\textstyle\binom{[m]}{k} \text{ and } J\in\binom{[p]}{k}.
\end{align}

We say that $A$ is a {\itshape totally positive matrix} if all its minors are positive. We say that an $n\times n$ matrix $A$ is a {\itshape totally positive upper-triangular matrix} if $A$ is upper-triangular and $\det(A_{I,J}) > 0$ for all $1 \le k \le n$ and $I\le J$ in $\binom{[n]}{k}$; that is, all the minors of $A$ are positive except those which are zero due to upper-triangularity. We similarly define the notion of a {\itshape totally positive lower-triangular matrix}.

Recall that $\mathbb{C}[x]_{\le n-1}$ denotes the $n$-dimensional subspace of $\mathbb{C}[x]$ consisting of all polynomials with degree at most $n-1$, which we identify with $\mathbb{C}^n$ via \eqref{polynomial_identification} (with $\mathbb{R}$ replaced by $\mathbb{C}$). Given a nonzero polynomial $f\in\mathbb{C}[x]_{\le n-1}$, we view the zeros of $f$ as being a multiset of $\mathbb{C}\cup\{\infty\}$ of size $n-1$, where the multiplicity of the zero at $\infty$ is $n - 1 - \deg(f)$. Note that the multiset of zeros only depends on $f$ modulo rescaling by $\mathbb{C}^{\times}$. We will sometimes denote $\infty$ by $-\infty$. We also regard $\mathbb{C}\cup\{\infty\}$ as $\mathbb{P}^1(\mathbb{C})$, where the point $x\in\mathbb{C}$ corresponds to $(x : 1)\in\mathbb{P}^1(\mathbb{C})$, and $\infty$ corresponds to $(1 : 0)\in\mathbb{P}^1(\mathbb{C})$. We say that {\itshape all zeros of $f$ are real} if its $n-1$ zeros lie in $\mathbb{R}\cup\{\infty\}$, or equivalently, in $\mathbb{P}^1(\mathbb{R})$.

\subsection{Flag varieties}\label{sec_grassmannian}
We now introduce Grassmannians and complete flag varieties, and define their totally nonnegative and totally positive parts.

\begin{defn}\label{defn_flag}
Let $n\in\mathbb{N}$. For $0 \le k \le n$, we define the {\itshape Grassmannian $\Gr_{k,n}(\mathbb{R})$} as the set of all $k$-dimensional subspaces of $\mathbb{R}^n$. Given $V\in\Gr_{k,n}(\mathbb{R})$, we say that an $n\times k$ matrix $A$ {\itshape represents $V$} if the columns of $A$ form a basis of $V$. (The matrix $A$ is unique modulo invertible column operations.) For $I\in\binom{[n]}{k}$, we let $\Delta_I(V)$ denote $\det(A_{I,[k]})$, called a {\itshape Pl\"{u}cker coordinate}. The minors $(\Delta_I(V))_{I\in\binom{[n]}{k}}$ are well-defined up to simultaneous rescaling, and provide projective coordinates on $\Gr_{k,n}(\mathbb{R})$.

We define the {\itshape complete flag variety} $\Fl_n(\mathbb{R})$ as the set of tuples $W = (W_1, \dots, W_{n-1})$, such that $W_k\in\Gr_{k,n}(\mathbb{R})$ for $1 \le k \le n-1$, and $W_1 \subset \cdots \subset W_{n-1}$. We say that an $n\times n$ matrix $B$ {\itshape represents} $W$ if for $1 \le k \le n-1$, the first $k$ columns of $B$ form a basis of $W_k$. 

We also make the corresponding definitions with $\mathbb{R}$ replaced by $\mathbb{C}$. We regard $\Gr_{k,n}(\mathbb{R})$ as the subset of $\Gr_{k,n}(\mathbb{C})$ of all subspaces which have a basis of real vectors (equivalently, which have real Pl\"{u}cker coordinates). We similarly regard $\Fl_n(\mathbb{R})$ as a subset of $\Fl_n(\mathbb{C})$.
\end{defn}

\begin{eg}\label{eg_flag}
Let $V\in\Gr_{2,4}(\mathbb{C})$ be represented by the matrix
$$
\begin{bmatrix}1 & 0 \\ 0 & 1 \\ a & b \\ c & d\end{bmatrix}.
$$
Then the Pl\"{u}cker coordinates of $V$ are
\begin{align*}
\hspace*{36pt}&\Delta_{12}(V) = 1, &&\Delta_{13}(V) = b, &&\Delta_{14}(V) = d,\hspace*{36pt} \\
\hspace*{36pt}&\Delta_{23}(V) = -a, &&\Delta_{24}(V) = -c, &&\Delta_{34}(V) = ad-bc.\hspace*{36pt}\qedhere
\end{align*}

\end{eg}

For an example in the case of $\Fl_3(\mathbb{R})$, see \cref{eg_intro}.

\begin{defn}\label{defn_nonnegative}
Let $n\in\mathbb{N}$. For $0 \le k \le n$, we say that $V\in\Gr_{k,n}(\mathbb{R})$ is {\itshape totally nonnegative} if all Pl\"{u}cker coordinates of $V$ are nonnegative (up to rescaling), i.e., $V$ can be represented by an $n\times k$ matrix whose maximal minors are nonnegative. We let $\Gr_{k,n}^{\ge 0}$ denote the set of all totally nonnegative $V$.

Let $W = (W_1, \dots, W_{n-1})\in\Fl_n(\mathbb{R})$. We say that $W$ is {\itshape totally nonnegative} if $W_k$ is totally nonnegative in $\Gr_{k,n}(\mathbb{R})$ for $1 \le k \le n-1$, i.e., $W$ can be represented by an $n\times n$ matrix whose left-justified minors (i.e.\ those which use an initial set of columns) are nonnegative. We denote the totally nonnegative subset of $\Fl_n(\mathbb{R})$ by $\Fl_n^{\ge 0}$.

We make the same definitions with ``nonnegative'' replaced by ``positive'', and obtain the totally positive subsets $\Gr_{k,n}^{>0}$ and $\Fl_n^{>0}$.
\end{defn}

\begin{eg}\label{eg_nonnegative}
Let $V\in\Gr_{2,4}(\mathbb{C})$ be as in \cref{eg_flag}. Then $V$ is totally nonnegative if and only if
$$
b, d, -a, -c, ad-bc \ge 0,
$$
and $V$ is totally positive if and only if all the inequalities above hold strictly.
\end{eg}

\begin{rmk}\label{two_notions}
The definitions of the totally nonnegative and totally positive parts of $\Fl_n(\mathbb{R})$ and $\Gr_{k,n}(\mathbb{R})$ in \cref{defn_flag} are different from, but equivalent to, the original definitions of Lusztig \cite{lusztig94,lusztig98}. Our definitions for $\Gr_{k,n}(\mathbb{R})$ agree with those of Postnikov \cite{postnikov07}. In the case of $\Gr_{k,n}(\mathbb{R})$, the equivalence is essentially \cref{flag_extension}. We refer to \cite[Section 1]{bloch_karp2} for further discussion and references.
\end{rmk}

We have the following characterization of $\Fl_n^{>0}$ due to Lusztig \cite{lusztig94}. For a closely related result, see \cite{shapiro_shapiro95}.
\begin{prop}[{Lusztig \cite[Proposition 8.14]{lusztig94}}]\label{connected_component_plucker}
The totally positive flag variety $\Fl_n^{>0}$ is a connected component inside $\Fl_n(\mathbb{R})$ of the subset where
\begin{gather*}
\Delta_{[k]}, \Delta_{[n]\setminus [n-k]} \neq 0 \quad \text{ for } 1 \le k \le n-1.
\end{gather*}

\end{prop}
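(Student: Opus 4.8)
The plan is to prove that $\Fl_n^{>0}$, viewed inside the locus
\[
\mathcal{U} := \bigl\{V \in \Fl_n(\mathbb{R}) : \Delta_{[k]}(V_k) \neq 0 \text{ and } \Delta_{[n]\setminus[n-k]}(V_k) \neq 0 \text{ for } 1 \le k \le n-1\bigr\},
\]
is simultaneously connected, open in $\mathcal{U}$, and closed in $\mathcal{U}$; being also nonempty, it is then a connected component of $\mathcal{U}$. Connectedness is immediate from Lusztig's parametrization: for a fixed reduced word of the longest permutation $w_0 \in S_n$, the corresponding product of Chevalley generators with positive parameters is a homeomorphism $\mathbb{R}_{>0}^{\binom{n}{2}} \xrightarrow{\,\sim\,} \Fl_n^{>0}$, and along its image every Pl\"{u}cker coordinate is positive, so in particular $\Fl_n^{>0} \subseteq \mathcal{U}$. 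Openness is routine: on any Pl\"{u}cker chart, total positivity is cut out by finitely many strict polynomial inequalities, so $\Fl_n^{>0}$ is open already in $\Fl_n(\mathbb{R})$.

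The substance is the closedness of $\Fl_n^{>0}$ in $\mathcal{U}$. Because the closure of $\Fl_n^{>0}$ inside $\Fl_n(\mathbb{R})$ is $\Fl_n^{\ge 0}$, this is equivalent to the statement
\[
\Fl_n^{\ge 0} \cap \mathcal{U} = \Fl_n^{>0},
\]
namely that a totally nonnegative flag none of whose corner coordinates $\Delta_{[k]}(V_k)$, $\Delta_{[n]\setminus[n-k]}(V_k)$ vanishes must be totally positive. I would prove this by passing to the unipotent chart: the conditions $\Delta_{[k]}(V_k) \neq 0$ for all $k$ are equivalent to $V$ having a (necessarily unique) representative $L$ that is lower-triangular with $1$'s on the diagonal — that is, to $V$ lying in the big opposite Schubert cell $C^e$. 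Since the Pl\"{u}cker coordinates of $V$ are the left-justified minors of $L$, the flag $V$ is totally nonnegative, respectively totally positive, exactly when $L$ is a totally nonnegative, respectively totally positive, lower-triangular matrix; and the remaining conditions $\Delta_{[n]\setminus[n-k]}(V_k) \neq 0$ translate into positivity of the lower-left corner minors $\det\!\bigl(L_{\{n-k+1,\dots,n\},\,[k]}\bigr)$ for $1 \le k \le n-1$. Thus closedness reduces to the matrix statement: \emph{a totally nonnegative lower-triangular unipotent matrix all of whose lower-left corner minors are positive is totally positive.}

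This matrix statement is the crux, and I expect it to be the main obstacle. It is a Fekete-type positivity criterion; I would prove it by induction on $n$, using Sylvester's determinant identity and its variants to propagate strict positivity from the $n-1$ corner minors to every minor once total nonnegativity is granted — the mechanism, already visible for $n=3$ and $n=4$, being that the vanishing of any non-corner minor together with the nonnegativity of the remaining ones contradicts one of the corner inequalities. Equivalently, the statement may be recognized as the assertion that the top-dimensional cell of the Lusztig--Rietsch decomposition $\Fl_n^{\ge 0} = \bigsqcup_{v \le w} \mathcal{R}^{>0}_{v,w}$, with $\mathcal{R}^{>0}_{v,w} \subseteq C^v \cap C_w$, is $\mathcal{R}^{>0}_{e,w_0} = \Fl_n^{>0}$: indeed $\mathcal{U} = C^e \cap C_{w_0}$, and since the Schubert cells $\{C_w\}$ and the opposite Schubert cells $\{C^v\}$ each partition $\Fl_n(\mathbb{R})$, only the pair $(e,w_0)$ contributes to $\Fl_n^{\ge 0} \cap \mathcal{U}$; however, establishing that decomposition ultimately relies on the same positivity-propagation input. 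With closedness in hand, the three properties together exhibit $\Fl_n^{>0}$ as a connected component of $\mathcal{U}$.
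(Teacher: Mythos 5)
The paper offers no proof of this proposition at all — it is quoted verbatim from Lusztig \cite[Proposition 8.14]{lusztig94} — so your proposal must stand on its own. Your skeleton is sound: $\Fl_n^{>0}$ is nonempty, and showing it is connected, open, and closed in the locus $\mathcal{U}$ would make it a connected component; moreover your reduction of closedness to the identity $\Fl_n^{\ge 0}\cap\mathcal{U}=\Fl_n^{>0}$ is legitimate (only the easy inclusion $\overline{\Fl_n^{>0}}\subseteq\Fl_n^{\ge 0}$ is needed there). The genuine gap is exactly where you flag it: the ``matrix statement'' — that a totally nonnegative lower unitriangular $L$ whose lower-left corner minors are all positive has all left-justified minors positive — \emph{is} the content of Lusztig's proposition, and ``induction on $n$ using Sylvester's identity and its variants'' is a hope, not an argument. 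The step is more delicate than your described mechanism suggests, because the naive one-Grassmannian analogue is false: the point of $\Gr_{2,4}(\mathbb{R})$ with $\Delta_{12}=\Delta_{13}=\Delta_{24}=\Delta_{34}=1$ and $\Delta_{14}=\Delta_{23}=0$ is totally nonnegative with both corner coordinates $\Delta_{[2]},\Delta_{[4]\setminus[2]}$ positive, yet is not totally positive. So ``vanishing of a non-corner minor contradicts a corner inequality'' cannot be run minor-by-minor inside a single $\Gr_{k,n}$; any propagation of strict positivity must exploit the corner conditions for \emph{all} $k$ together with the nesting $V_1\subset\cdots\subset V_{n-1}$, which is precisely what Lusztig's parametrization machinery (equivalently, the Rietsch cell decomposition you allude to) supplies and what your sketch defers to without proving.

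Two subsidiary issues. First, your assertion that $V$ is totally nonnegative (resp.\ positive) in the Pl\"{u}cker sense exactly when $L$ is a totally nonnegative (resp.\ positive) lower-triangular \emph{matrix} is itself a nontrivial equivalence (cf.\ \cref{two_notions}): the Pl\"{u}cker definition constrains only the left-justified minors $\det(L_{I,[k]})$, not all minors $\det(L_{I,J})$ with $I\ge J$, so the crux should be stated purely in terms of left-justified minors to avoid importing that equivalence. Second, deducing connectedness from Lusztig's parametrization requires knowing that the image of the positive parameters equals the Pl\"{u}cker-positive locus — again the same nontrivial equivalence; citing it is fine, but it is an external input of comparable depth to the proposition being proved, so the proposal as written is closer to a reduction to Lusztig's theory than to an independent proof.
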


We will need at least one of the following two results for the proof of \cref{conjectures_equivalent}.
\begin{thm}[Rietsch \cite{rietsch09}]\label{flag_extension}
Let $V\in\Gr_{k,n}^{\ge 0}$. Then there exists a totally nonnegative complete flag $(W_1, \dots, W_{n-1})\in\Fl_n^{\ge 0}$ such that $W_k = V$.
\end{thm}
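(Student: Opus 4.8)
I would prove this by constructing the flag through $V$ one dimension at a time. By \cref{defn_nonnegative}, a complete flag $(W_1, \dots, W_{n-1})$ lies in $\Fl_n^{\ge 0}$ precisely when each $W_i$ is totally nonnegative in $\Gr_{i,n}(\mathbb{R})$, so the task is to produce a chain $W_1 \subset \cdots \subset W_{n-1}$ of subspaces of $\mathbb{R}^n$ with $\dim W_i = i$ for all $i$, with $W_k = V$, and with $W_i \in \Gr_{i,n}^{\ge 0}$ for all $i$. This reduces the theorem to two one-step claims: \emph{(going up)} every $U \in \Gr_{m,n}^{\ge 0}$ with $m \le n-1$ is contained in some element of $\Gr_{m+1,n}^{\ge 0}$, and \emph{(going down)} every $U \in \Gr_{m,n}^{\ge 0}$ with $m \ge 1$ contains some element of $\Gr_{m-1,n}^{\ge 0}$. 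Granting these, the plan is to go up from $V$ to obtain $W_k = V \subset W_{k+1} \subset \cdots \subset W_{n-1}$, to go down from $V$ to obtain $W_1 \subset \cdots \subset W_{k-1} \subset W_k = V$, and then to concatenate the two chains.

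For \emph{going up} I would induct on $n$ (the case $n = 1$ being trivial). If $e_n \notin U$, put $W := U + \mathbb{R}e_n$. If $A$ is an $n \times m$ matrix representing $U$ with nonnegative maximal minors, then $[A \mid e_n]$ represents $W$ (its $m+1$ columns being independent since $e_n \notin U$), and for $J \in \binom{[n]}{m+1}$ the Pl\"ucker coordinate $\Delta_J(W)$ equals the corresponding minor of $[A \mid e_n]$: this minor vanishes when $n \notin J$, because its last column is then zero, and equals $\Delta_{J \setminus \{n\}}(U) \ge 0$ when $n \in J$, because the submatrix is block triangular. Hence $W \in \Gr_{m+1,n}^{\ge 0}$. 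If instead $e_n \in U$, set $U' := U \cap \langle e_1, \dots, e_{n-1}\rangle$; then $\dim U' = m-1$ and $U = U' \oplus \mathbb{R}e_n$, and taking a basis of $U'$ and adjoining $e_n$ gives a representing matrix of $U$ from which one reads off $\Delta_{I \cup \{n\}}(U) = \Delta_I(U')$ for all $I \in \binom{[n-1]}{m-1}$, while $\Delta_J(U) = 0$ whenever $n \notin J$. As the coordinates $\Delta_{I \cup \{n\}}(U)$ are nonnegative, $U'$ is totally nonnegative in $\Gr_{m-1,n-1}$, so by induction $U' \subset W'$ for some $W' \in \Gr_{m,n-1}^{\ge 0}$, and then $W := W' \oplus \mathbb{R}e_n \in \Gr_{m+1,n}^{\ge 0}$ contains $U$ by the same computation.

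For \emph{going down} I would pass to the orthocomplement. The bilinear pairing \eqref{pairing} provides an inclusion-reversing involution $U \mapsto U^\perp$ carrying $\Gr_{m,n}^{\ge 0}$ bijectively onto $\Gr_{n-m,n}^{\ge 0}$, so applying \emph{going up} to $U^\perp$ and then taking $\perp$ again yields the desired subspace of $U$. Concatenating the two chains built from $V$ then gives the theorem.

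I expect the main obstacle to be the inductive bookkeeping in \emph{going up}. The case $e_n \notin U$ does not suffice on its own, since after the first extension every later subspace in the chain contains $e_n$; thus the case $e_n \in U$ is genuinely needed, and one must verify carefully that passing to $U' = U \cap \langle e_1, \dots, e_{n-1}\rangle$ preserves total nonnegativity — the key being that the Pl\"ucker coordinates of $U$ indexed by subsets not containing $n$ are forced to vanish, so the remaining ones are exactly the Pl\"ucker coordinates of $U'$. The other inputs — the description of $\Fl_n^{\ge 0}$ in \cref{defn_nonnegative} and the standard properties of $U \mapsto U^\perp$ — are routine. (Alternatively, one could first reduce to the case $V \in \Gr_{k,n}^{>0}$ using that $\Gr_{k,n}^{>0}$ is dense in $\Gr_{k,n}^{\ge 0}$ and that $\Fl_n^{\ge 0}$ is compact, but this does not appear to simplify the extension step.)
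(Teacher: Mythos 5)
Your argument is correct, and it takes a genuinely different route from the paper, which gives no proof of \cref{flag_extension} at all: it cites Rietsch, whose argument runs through Lusztig's theory of $\Fl_n^{\ge 0}$ (surjectivity of the projection to $\Gr_{k,n}^{\ge 0}$ via parametrizations), combined with the nontrivial identification of Lusztig's definitions with the Pl\"ucker-coordinate definitions used here (cf.\ \cref{two_notions}). Your proof instead works entirely with the Pl\"ucker definition of \cref{defn_nonnegative}, which is exactly the form in which the theorem is applied in \cref{sec_disconjugate_proof}. Both halves of your reduction check out. In the ``going up'' step, the identity $\Delta_J(U+\mathbb{R}e_n)=\Delta_{J\setminus\{n\}}(U)$ for $n\in J$ (and $\Delta_J=0$ otherwise) holds for any $U$ with $e_n\notin U$ by expanding along the column $e_n$; in the case $e_n\in U$, the forced vanishing of all $\Delta_J(U)$ with $n\notin J$ correctly identifies the Pl\"ucker coordinates of $U'=U\cap\langle e_1,\dots,e_{n-1}\rangle$ in $\Gr_{m-1,n-1}$ with the surviving coordinates of $U$ (up to the global rescaling ambiguity, which is harmless), and the recursion $(m,n)\mapsto(m-1,n-1)$ terminates because $m=0$ always falls into the first case. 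The ``going down'' step is immediate from \cref{perpendicular_pluckers} together with the fact that $\perp$ from \eqref{pairing} is nondegenerate and inclusion-reversing. What your route buys is a short, self-contained, purely linear-algebraic proof of exactly the statement the paper needs; what it does not recover is the finer content of Rietsch's work (the cell structure of $\Fl_n^{\ge 0}$ and of the fibers over $\Gr_{k,n}^{\ge 0}$), which the paper does not use.
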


\begin{thm}[{Gantmakher and Krein \cite[Theorem V.3]{gantmaher_krein50}}]\label{gantmakher_krein}
Let $V\in\Gr_{k,n}(\mathbb{R})$. Then $V$ is totally nonnegative if and only if each vector in $V$ changes sign at most $k-1$ times, when viewed as a sequence of $n$ numbers and ignoring any zeros.
\end{thm}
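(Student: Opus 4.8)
The plan is to handle both directions with a single Laplace-expansion identity. Fix $V\in\Gr_{k,n}(\mathbb{R})$ with an $n\times k$ representing matrix $A$, a vector $v\in V$, and indices $i_0<\cdots<i_k$ in $[n]$. The $(k+1)\times(k+1)$ matrix obtained from rows $i_0,\dots,i_k$ of $A$ by adjoining the column $(v_{i_0},\dots,v_{i_k})$ is singular (its last column is a combination of the others), and expanding along that column gives
\begin{equation*}
\sum_{t=0}^{k}(-1)^{t}\,\Delta_{\{i_0,\dots,i_k\}\setminus\{i_t\}}(V)\,v_{i_t}=0. \tag{$\star$}
\end{equation*}
Call $v\in\mathbb{R}^n$ \emph{$k$-alternating} if there are positions $i_0<\cdots<i_k$ at which $v$ is nonzero and strictly alternates in sign; the condition in the statement (every nonzero $v\in V$ changes sign at most $k-1$ times) says exactly that $V$ contains no $k$-alternating vector. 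I will also use that the sign pattern of the Pl\"{u}cker coordinates of $V$ is well-defined up to a global sign, so ``$V$ is not totally nonnegative'' means exactly that $\Delta_I(V)\Delta_J(V)<0$ for some $I,J\in\binom{[n]}{k}$.

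For the forward direction, suppose $V\in\Gr_{k,n}^{\ge 0}$ but $v\in V$ is $k$-alternating, say at $i_0<\cdots<i_k$. I would first pass to the totally positive case. Choose an $n\times n$ totally positive matrix $P$ close enough to $I_n$; such matrices accumulate at the identity (a classical fact, e.g.\ by realizing them as products of Chevalley generators $x_i(t),y_i(t)$ with small parameters $t>0$). By the Cauchy--Binet identity \eqref{cauchy-binet}, $PV$ is totally positive, since each $\Delta_K(V)\ge 0$ with some $\Delta_{K_0}(V)>0$ while every minor $\det(P_{I,K_0})>0$. Moreover $Pv\in PV$ is still $k$-alternating with the same witnessing indices, because $Pv\to v$ and each $v_{i_t}\neq 0$. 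Applying $(\star)$ to $PV$ and $Pv$: the coefficients $\Delta_{\{i_0,\dots,i_k\}\setminus\{i_t\}}(PV)$ are all strictly positive, and $(-1)^t(Pv)_{i_t}$ all share a sign because $(Pv)_{i_t}$ strictly alternate; hence the left side of $(\star)$ is a nonzero sum of terms of one sign, a contradiction.

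For the backward direction, suppose $V$ is not totally nonnegative, so $\Delta_I(V)\Delta_J(V)<0$ for some bases $I,J$. Since the basis-exchange graph of the matroid of $V$ is connected, walking from $I$ to $J$ through single exchanges produces $L\in\binom{[n]}{k-1}$ and distinct $a,b\notin L$ with $\Delta_{L\cup\{a\}}(V)$ and $\Delta_{L\cup\{b\}}(V)$ nonzero of opposite sign. Put $T:=L\cup\{a,b\}=\{i_0<\cdots<i_k\}$ and consider the image $W:=\{(w_{i_0},\dots,w_{i_k}):w\in V\}$, which by $(\star)$ lies in the hyperplane of $\mathbb{R}^{k+1}$ with coefficient vector $\big((-1)^{t}\Delta_{T\setminus\{i_t\}}(V)\big)_{t}$. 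Since some (indeed two) of the $\Delta_{T\setminus\{i_t\}}(V)$ are nonzero, $A_{T,[k]}$ has rank $k$, so $W$ is $k$-dimensional and equals that hyperplane. An elementary argument then shows that a coordinate hyperplane $\{x:\sum_t\nu_t x_t=0\}$ in $\mathbb{R}^{k+1}$ contains a strictly alternating vector whenever the numbers $(-1)^t\nu_t$ are not all of one sign --- which holds here, the entries in the $a$- and $b$-slots being strictly opposite in sign. Lifting such a vector back to $V$ yields a $k$-alternating vector, contradicting the hypothesis.

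The step I expect to be the crux is the passage to the totally positive case in the forward direction, which I have routed through the classical density of totally positive matrices near $I_n$. One can instead avoid this: $(\star)$ together with the sign constraints forces every $k$-alternating $v\in V$ to have \emph{all} the Pl\"{u}cker coordinates $\Delta_{T\setminus\{i_t\}}(V)$ on its witnessing rows equal to zero, and one must then rule this out by an induction on $k$ handling the rank-deficient submatrix $A_{T,[k]}$ --- doable but more delicate, so I would prefer the perturbative route. On the backward side, the only delicate point is the elementary sign lemma about coordinate hyperplanes in $\mathbb{R}^{k+1}$, together with keeping the signs in $(\star)$ straight.
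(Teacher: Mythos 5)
The paper does not prove this theorem---it is quoted from Gantmakher and Krein and used as a black box---so there is no internal proof to compare against; what you have written is a correct, self-contained argument. Your identity $(\star)$ is the Laplace expansion of the singular matrix $[A_{T,[k]}\mid v_T]$ along its last column (up to an overall factor $(-1)^k$), and both directions check out. In the forward direction, the perturbative step is legitimate, and you do not even need to appeal to Chevalley generators: the paper's own \cref{shift_totally_positive} produces totally positive matrices $\rev\circ\shift{s}\circ\rev\circ\shift{t}$ converging to $I_n$ as $s,t\to 0^+$; with $P$ totally positive, Cauchy--Binet makes every $\Delta_I(PV)$ strictly positive, and $(\star)$ applied to $Pv$ becomes a sum of nonzero terms of a single sign, the desired contradiction. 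In the backward direction, connectivity of the basis-exchange graph correctly localizes a sign change to two bases $L\cup\{a\}$, $L\cup\{b\}$ differing in one element; the projection $W$ onto the coordinates $T=L\cup\{a,b\}$ equals the hyperplane cut out by $(\star)$ because $A_{T,[k]}$ has rank $k$; and your sign lemma is sound once one substitutes $y_t=(-1)^tx_t$, since it reduces to the statement that a linear form $\sum_t\mu_ty_t$ with $\mu_t=\Delta_{T\setminus\{i_t\}}(V)$ having two nonzero entries of opposite sign (namely the $a$- and $b$-slots) vanishes somewhere on the open positive orthant. The resulting $w\in V$ alternates strictly at $k+1$ positions, hence has at least $k$ sign changes. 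This is essentially the classical variation-diminishing argument, organized cleanly around the single identity $(\star)$; I find no gap.
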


\subsection{Wronski map on the Grassmannian}\label{sec_wronski_map}
We now define the Wronskian of an element $V\in\Gr_{k,n}(\mathbb{C})$, by viewing $V$ as a subspace of $\mathbb{C}[x]_{\le n-1}$.
\begin{defn}\label{defn_wronski_map}
Given $V\in\Gr_{k,n}(\mathbb{C})$, we define the {\itshape Wronskian} $\Wr(V)$ of $V$ by
$$
\Wr(V) := \Wr(f_1, \dots, f_k) \in\mathbb{P}(\mathbb{C}[x]_{\le k(n-k)}),
$$
where $f_1, \dots, f_k\in\mathbb{C}[x]_{\le n-1}$ is a $\mathbb{C}$-linear basis of $V$, and the Wronskian of $k$ functions is defined in \eqref{wronskian_formula}. We regard $\Wr(V)$ as an element of $\mathbb{C}[x]_{\le k(n-k)}$ modulo rescaling by $\mathbb{C}^\times$, i.e., as an element of $\mathbb{P}(\mathbb{C}[x]_{\le k(n-k)})$, so that it does not depend on the choice of basis $f_1, \dots, f_k$. To see that $\Wr(V)$ has degree at most $k(n-k)$, note that we can choose a basis of $V$ such that $f_i$ has degree at most $n-i$, for $1 \le i \le k$. If $\deg(\Wr(V)) < k(n-k)$, then we regard $\Wr(V)$ as having a zero at $\pm\infty$ of order $k(n-k) - \deg(\Wr(V))$.
\end{defn}

We have an explicit formula for $\Wr(V)$ in terms of the Pl\"{u}cker coordinates of $V$.

\begin{defn}\label{defn_vandermonde_constant}
Given $I\subseteq [n]$, let $\vand{I} := \displaystyle\frac{1}{1!\hspace*{1pt}2!\hspace*{1pt}\cdots (k-1)!}\prod_{i,j\in I, \, i < j}(j-i) \in \mathbb{Z}_{>0}$.
\end{defn}

For example, we have $\vand{124} = \frac{1}{1!\hspace*{1pt}2!}(2-1)(4-1)(4-2) = 3$. If $|I| = 1$, then $\vand{I} = 1$.
\begin{prop}[{Purbhoo \cite[Proposition 2.3]{purbhoo10}}]\label{wronskian_pluckers}
Let $V\in\Gr_{k,n}(\mathbb{C})$. Then
\begin{gather*}
\Wr(V) = \sum_{I\in\binom{[n]}{k}}\vand{I}\Delta_I(V)x^{\sumof{I} - \binom{k+1}{2}}.
\end{gather*}

\end{prop}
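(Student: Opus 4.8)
The plan is to reduce the identity to the classical evaluation of the Wronskian of $k$ monomials, using the linearity of differentiation together with the Cauchy--Binet identity \eqref{cauchy-binet}. First I would fix a basis $f_1, \dots, f_k$ of $V$ and write $f_j = \sum_{l=1}^{n} a_{lj} x^{l-1}$, so that the $n \times k$ matrix $A = (a_{lj})$ represents $V$ and $\Delta_I(V) = \det(A_{I,[k]})$. Since $\frac{d^{i-1}}{dx^{i-1}}$ is linear, the Wronskian matrix $\big(f_j^{(i-1)}\big)_{1 \le i,j \le k}$ factors as the product $DA$, where $D$ is the $k \times n$ matrix with entries $D_{il} = \frac{d^{i-1}}{dx^{i-1}} x^{l-1} \in \mathbb{C}[x]$. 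Applying \eqref{cauchy-binet} then gives
\begin{align*}
\Wr(V) = \det(DA) = \sum_{I \in \binom{[n]}{k}} \det(D_{[k],I}) \, \Delta_I(V),
\end{align*}
so the whole statement comes down to evaluating the minor $\det(D_{[k],I})$ for a fixed $I = \{i_1 < \cdots < i_k\}$.

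Next I would observe that $D_{[k],I}$ is exactly the Wronskian matrix of the monomials $x^{i_1-1}, \dots, x^{i_k-1}$, so $\det(D_{[k],I}) = \Wr(x^{i_1-1}, \dots, x^{i_k-1})$. To compute this, pull $x^{i_b-1}$ out of column $b$ and $x^{-(a-1)}$ out of row $a$ (legitimate by multilinearity of the determinant in its rows and columns, working over $\mathbb{C}(x)$ to allow negative powers); what remains is the matrix $\big((i_b-1)^{\underline{a-1}}\big)_{1 \le a,b \le k}$ of falling factorials. Since $t^{\underline{a-1}}$ is monic of degree $a-1$, unitriangular row operations turn this into the Vandermonde matrix $\big((i_b-1)^{a-1}\big)_{a,b}$ without changing the determinant, yielding
\begin{align*}
\det(D_{[k],I}) = \Big(\prod_{i,j \in I,\ i<j}(j-i)\Big)\, x^{\sumof{I} - \binom{k+1}{2}} = 1!\,2!\cdots(k-1)! \cdot \vand{I} \cdot x^{\sumof{I} - \binom{k+1}{2}},
\end{align*}
using the definition of $\vand{I}$ and the identity $1!\,2!\cdots(k-1)! = \prod_{1 \le a < b \le k}(b-a)$ (which also shows $\vand{I} \in \mathbb{Z}_{>0}$). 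Substituting back, $\Wr(V) = 1!\cdots(k-1)!\sum_I \vand{I}\,\Delta_I(V)\, x^{\sumof{I} - \binom{k+1}{2}}$; since $\Wr(V)$ is only defined up to a global nonzero scalar, the constant $1!\cdots(k-1)!$ drops out and we get the asserted identity in $\mathbb{P}(\mathbb{C}[x]_{\le k(n-k)})$. I would also note that the exponents $\sumof{I} - \binom{k+1}{2}$ range over $\{0,1,\dots,k(n-k)\}$, as one sees from the extreme cases $I = [k]$ and $I = \{n-k+1,\dots,n\}$, confirming the stated degree bound.

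There is no genuinely hard step; the argument is bookkeeping. The place most prone to error is the monomial Wronskian computation — keeping straight which powers of $x$ are extracted from rows versus columns, and the passage from the falling-factorial matrix to a Vandermonde. One subtlety worth flagging explicitly is that the identity is projective: the normalization constant $1!\,2!\cdots(k-1)!$ in $\vand{I}$ is present precisely so that $\vand{I}$ is integer-valued (equivalently, $1!\cdots(k-1)!$ always divides $\prod_{i<j\in I}(j-i)$), and it is harmless here only because we work modulo rescaling by $\mathbb{C}^\times$. As a cleaner alternative to the matrix manipulations, one can package the Cauchy--Binet step into exterior algebra: the Wronskian induces a linear map $\bigwedge^k \mathbb{C}[x]_{\le n-1} \to \mathbb{C}[x]_{\le k(n-k)}$, evaluating it on the basis vector $e_{i_1} \wedge \cdots \wedge e_{i_k}$ recovers $\Wr(x^{i_1-1},\dots,x^{i_k-1})$, and $\Delta_I(V)$ is exactly the coefficient of $e_{i_1}\wedge\cdots\wedge e_{i_k}$ in the decomposable vector representing $V$.
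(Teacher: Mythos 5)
Your proof is correct and is essentially the argument the paper sketches in \cref{motivation_remark}: your matrix $D$ is, up to multiplying row $i$ by $(i-1)!$, the first $k$ rows of the shift matrix $\shift{x}$ from \cref{shift_properties}\ref{shift_properties_binomial}, so your Cauchy--Binet step is the paper's identity $\Wr(V) = \Delta_{[k]}(\shift{x}(V))$, and your falling-factorial/Vandermonde evaluation of $\det(D_{[k],I})$ is a perfectly valid alternative to the lattice-path evaluation of the corresponding binomial minor in \cref{shift_positivity}. You are also right to flag that the overall factor $1!\,2!\cdots(k-1)!$ is harmless, since both $\Wr(V)$ and the Pl\"{u}cker coordinates are only defined up to a global nonzero scalar.
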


\begin{eg}\label{eg_wronskian_pluckers}
For $V\in\Gr_{2,4}(\mathbb{C})$, we have
\begin{gather*}
\Wr(V) = \Delta_{12}(V) + 2\Delta_{13}(V)x + (3\Delta_{14}(V) + \Delta_{23}(V))x^2 + 2\Delta_{24}(V)x^3 + \Delta_{34}(V)x^4.\qedhere
\end{gather*}

\end{eg}

\subsection{Bilinear pairing}\label{sec_pairing}
We now define a bilinear pairing on $\mathbb{C}^n$, which induces a duality between $\Gr_{k,n}(\mathbb{C})$ and $\Gr_{n-k,n}(\mathbb{C})$ compatible with both total positivity and the Wronski map. We refer to \cite[Section 2.3]{garcia-puente_hein_hillar_martin_del_campo_ruffo_sottile_teitler12} for a related discussion of Grassmannian duality.
\begin{defn}\label{defn_pairing}
We define the bilinear pairing $\langle\cdot,\cdot\rangle$ on $\mathbb{C}^n$ by
\begin{align}\label{pairing}
\langle a,b\rangle := \sum_{i=1}^n(-1)^{i-1}\frac{a_ib_{n+1-i}}{\binom{n-1}{i-1}} \quad \text{ for } a,b\in\mathbb{C}^n.
\end{align}
Given $V\in\Gr_{k,n}(\mathbb{C})$, we let $V^\perp\in\Gr_{n-k,n}(\mathbb{C})$ denote the space perpendicular to $V$ under $\langle\cdot,\cdot\rangle$:
$$
V^\perp := \{b\in\mathbb{R}^n : \langle a,b\rangle = 0 \text{ for all } a\in V\}.
$$
Also, for $I \in\binom{[n]}{k}$, we define $I^\perp\in\binom{[n]}{n-k}$ by
\begin{gather*}
I^\perp := \{n+1 - i : i\in [n]\setminus I\}.
\end{gather*}

\end{defn}

\begin{eg}\label{eg_pairing}
Let $V\in\Gr_{2,4}(\mathbb{C})$ be represented by the matrix
$$
\begin{bmatrix}1 & 0 \\ 0 & 1 \\ a & b \\ c & d\end{bmatrix}.
$$
Then $V^\perp\in\Gr_{2,4}(\mathbb{C})$ is represented by the matrix
\begin{gather*}
\begin{bmatrix}1 & 0 \\ 0 & 3 \\ -3d & 3b \\ c & -a\end{bmatrix}.\qedhere
\end{gather*}

\end{eg}

We recall the constants $\vand{I}$ introduced in \cref{defn_vandermonde_constant}.
\begin{lem}\label{perpendicular_sum}
Let $I\in\binom{[n]}{k}$.
\begin{enumerate}[label=(\roman*), leftmargin=*, itemsep=6pt]
\item\label{perpendicular_sum_sum} We have $\sumof{I} - \binom{k+1}{2} = \sumof{I^\perp} - \binom{n-k+1}{2}$.
\item\label{perpendicular_sum_product} We have $\displaystyle\frac{\prod_{i\in I}(i-1)!}{1!\hspace{1pt}2!\hspace*{1pt}\cdots (k-1)!\hspace*{1pt}\vand{I}} = \frac{\prod_{i\in I^\perp}(i-1)!}{1!\hspace*{1pt}2!\hspace*{1pt}\cdots (n-k-1)!\hspace*{1pt}\vand{I^\perp}}$.
\end{enumerate}

\end{lem}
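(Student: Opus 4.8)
The plan is to handle the two parts separately: part \ref{perpendicular_sum_sum} is a routine polynomial identity in $n$ and $k$, and part \ref{perpendicular_sum_product} rests on a single combinatorial observation after the denominators are cleared. For part \ref{perpendicular_sum_sum}, I would start from $I^\perp = \{n+1-i : i\in[n]\setminus I\}$ and sum: $\sumof{I^\perp} = (n-k)(n+1) - \sum_{i\in[n]\setminus I} i = (n-k)(n+1) - \binom{n+1}{2} + \sumof{I}$. Substituting this into $\sumof{I^\perp} - \binom{n-k+1}{2}$ and subtracting $\sumof{I} - \binom{k+1}{2}$, the $\sumof{I}$ terms cancel, and one is left to check the numerical identity $(n-k)(n+1) - \binom{n+1}{2} - \binom{n-k+1}{2} + \binom{k+1}{2} = 0$, which expands out immediately (using $(n-k)^2 - k^2 = n(n-2k)$). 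So part \ref{perpendicular_sum_sum} is pure bookkeeping.

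For part \ref{perpendicular_sum_product}, I would first clear the factorial prefactors using the definition of $\vand{I}$, namely $1!\,2!\cdots(k-1)!\,\vand{I} = \prod_{i<j,\,i,j\in I}(j-i)$, which reduces the claim to
\[
\frac{\prod_{i\in I}(i-1)!}{\prod_{i<j,\,i,j\in I}(j-i)} = \frac{\prod_{i\in I^\perp}(i-1)!}{\prod_{i<j,\,i,j\in I^\perp}(j-i)}.
\]
Since $I^\perp$ is the image of $[n]\setminus I$ under the order-reversing bijection $j\mapsto n+1-j$, which preserves all pairwise differences and sends $j$ to an element whose "$(\cdot-1)!$" is $(n-j)!$, the right-hand side rewrites as $\prod_{j\in[n]\setminus I}(n-j)!\big/\prod_{j<j',\,j,j'\in[n]\setminus I}(j'-j)$. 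So it suffices to prove
\[
\frac{\prod_{i\in I}(i-1)!}{\prod_{i<i',\,i,i'\in I}(i'-i)} = \frac{\prod_{j\in [n]\setminus I}(n-j)!}{\prod_{j<j',\,j,j'\in [n]\setminus I}(j'-j)}.
\]

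The key step is to evaluate the cross product $P := \prod_{i\in I,\,j\in[n]\setminus I}|i-j|$ in two ways. Using the elementary identity $\prod_{m\in[n]\setminus\{\ell\}}|\ell-m| = (\ell-1)!\,(n-\ell)!$ and dividing off the differences internal to $I$ (resp. to $[n]\setminus I$), one obtains
\[
P \,=\, \frac{\prod_{i\in I}(i-1)!\,(n-i)!}{\bigl(\prod_{i<i',\,i,i'\in I}(i'-i)\bigr)^{2}}
\,=\, \frac{\prod_{j\in[n]\setminus I}(j-1)!\,(n-j)!}{\bigl(\prod_{j<j',\,j,j'\in[n]\setminus I}(j'-j)\bigr)^{2}}.
\]
Equating these, and combining with the telescoping identities $\prod_{i\in I}(i-1)!\cdot\prod_{j\in[n]\setminus I}(j-1)! = \prod_{m=0}^{n-1}m! = \prod_{i\in I}(n-i)!\cdot\prod_{j\in[n]\setminus I}(n-j)!$, one finds that the two sides of the displayed identity have equal squares; since both are positive, they are equal, proving part \ref{perpendicular_sum_product}. (A slicker alternative: recognize $\prod_{i\in I}(i-1)!\big/\prod_{i<i'\in I}(i'-i)$ as $|\lambda|!/f^{\lambda}$, where $\lambda$ is the partition associated to $I$ in the $k\times(n-k)$ box and $f^\lambda$ is its number of standard tableaux, via the Frobenius/hook-length formula; since $I^\perp$ corresponds to the conjugate partition $\lambda'$ and $f^\lambda = f^{\lambda'}$, the identity reduces to $|\lambda(I)| = |\lambda(I^\perp)|$, which is exactly part \ref{perpendicular_sum_sum}.)

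I expect the only genuine obstacle to be finding the right bridge in part \ref{perpendicular_sum_product}: once one thinks to compute the cross product $P$ in two ways (equivalently, to invoke the conjugate-partition interpretation), the conclusion is forced, and both the reduction by clearing $\vand{I}$ and the computation in part \ref{perpendicular_sum_sum} are mechanical.
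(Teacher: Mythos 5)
Your proposal is correct. Part \ref{perpendicular_sum_sum} is the same computation as the paper's. For part \ref{perpendicular_sum_product}, the paper's proof is a one-line assertion that both sides equal the cross-difference product $\prod_{i\in [n]\setminus I,\, j\in I,\, i<j}(j-i)$; this drops out immediately from the single factorization $(i-1)! = \prod_{m<i}(i-m)$, split according to whether $m\in I$ or $m\in[n]\setminus I$ (the internal pairs cancel against $1!\,2!\cdots(k-1)!\,\vand{I}=\prod_{i<j\in I}(j-i)$, leaving exactly the cross pairs), together with the same order-reversing bijection you use to handle the $I^\perp$ side. You instead evaluate the symmetric product $P=\prod_{i\in I,\,j\in[n]\setminus I}|i-j|$ in two ways and deduce that the two sides have equal squares, then use positivity. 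That route is valid — I checked that $AA'E^2=BB'D^2$ together with $AB=A'B'=\prod_{m=0}^{n-1}m!$ does force $AE=B'D$ — but it is more roundabout than necessary: the identity $\prod_{m\in[n]\setminus\{\ell\}}|\ell-m|=(\ell-1)!\,(n-\ell)!$ that you invoke already contains, in its ``$m<\ell$'' half, the direct cancellation the paper uses, so the detour through $P$ and the squaring/positivity step can be skipped. Your parenthetical alternative via the Frobenius formula and $f^\lambda=f^{\lambda'}$ is also correct and is a nice conceptual explanation of why \ref{perpendicular_sum_product} reduces to \ref{perpendicular_sum_sum}, though it imports more machinery than the elementary manipulation requires.
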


\begin{proof}
\ref{perpendicular_sum_sum} We have $\textstyle\sumof{I^\perp} = (n-k)(n+1) - (\sumof{[n]} - \sumof{I}) = \textstyle\binom{n-k+1}{2} - \binom{k+1}{2} + \sumof{I}$.\vspace*{2pt}

\ref{perpendicular_sum_product} We can show that both sides equal $\displaystyle\prod_{i\in [n]\setminus I,\, j\in I, \, i < j}(j-i)$.
\end{proof}

The following result follows from \cite[Lemma 1.11(ii)]{karp17}, to which we refer for further discussion and references.
\begin{lem}\label{perpendicular_pluckers}
Let $V\in\Gr_{k,n}(\mathbb{C})$. Then
$$
\vand{I}\Delta_I(V) = \vand{I^\perp}\Delta_{I^\perp}(V^\perp) \quad \text{ for all } I\in\textstyle\binom{[n]}{k}.
$$
In particular, $V$ is totally nonnegative if and only if $V^\perp$ is totally nonnegative, and $V$ is totally positive if and only if $V^\perp$ is totally positive.
\end{lem}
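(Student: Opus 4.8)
The plan is to deduce the displayed identity from \cite[Lemma 1.11(ii)]{karp17} after accounting for the binomial weights in the pairing $\langle\cdot,\cdot\rangle$, and then to read off the statement about total nonnegativity and total positivity directly from the positivity of the constants $\vand{I}$.

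First I would strip the weights $\binom{n-1}{i-1}$ out of $\langle\cdot,\cdot\rangle$. Let $D := \diag\big(\binom{n-1}{0}, \binom{n-1}{1}, \dots, \binom{n-1}{n-1}\big)$, and let $\langle a, b\rangle_0 := \sum_{i=1}^n (-1)^{i-1}a_i b_{n+1-i}$ be the unweighted alternating pairing on $\mathbb{C}^n$ considered in \cite{karp17}. Then $\langle a, b\rangle = \langle D^{-1}a, b\rangle_0$, so $V^\perp$ is the $\langle\cdot,\cdot\rangle_0$-orthocomplement of $D^{-1}V$. Applying \cite[Lemma 1.11(ii)]{karp17}, which computes the Pl\"{u}cker coordinates of a $\langle\cdot,\cdot\rangle_0$-orthocomplement --- up to a single global rescaling, and with the reordering signs absorbed into the notation $I \mapsto I^\perp$ --- we get $\Delta_{I^\perp}\big((D^{-1}V)^\perp\big) = \Delta_I(D^{-1}V)$ for all $I\in\binom{[n]}{k}$. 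Since $\Delta_I(D^{-1}V) = \big(\prod_{i\in I}\binom{n-1}{i-1}\big)^{-1}\Delta_I(V)$, this yields
$$
\Delta_{I^\perp}(V^\perp) = \Big(\prod_{i\in I}\textstyle\binom{n-1}{i-1}\Big)^{-1}\Delta_I(V) \qquad \big(I\in\textstyle\binom{[n]}{k}\big),
$$
again up to a global rescaling, which is harmless since Pl\"{u}cker coordinates are projective.

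Next I would turn the binomial factor into $\vand{I}/\vand{I^\perp}$. Using $\binom{n-1}{i-1}^{-1} = \frac{(i-1)!\,(n-i)!}{(n-1)!}$ and $\prod_{i\in I^\perp}(i-1)! = \prod_{i\in[n]\setminus I}(n-i)!$, one sees that $\prod_{i\in I}\binom{n-1}{i-1}^{-1}$ equals $\vand{I}/\vand{I^\perp}$ up to a factor independent of $I$; this is exactly \cref{perpendicular_sum}\ref{perpendicular_sum_product} after clearing the factorials $1!\,2!\cdots$. Substituting into the display and absorbing the $I$-independent factor into the global rescaling gives $\vand{I}\Delta_I(V) = \vand{I^\perp}\Delta_{I^\perp}(V^\perp)$ for all $I$. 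The last assertion is then immediate: $I \mapsto I^\perp$ is a bijection from $\binom{[n]}{k}$ onto $\binom{[n]}{n-k}$ and $\vand{I}, \vand{I^\perp} > 0$ by \cref{defn_vandermonde_constant}, so for suitable representatives $\Delta_I(V)\ge 0$ holds for all $I\in\binom{[n]}{k}$ if and only if $\Delta_J(V^\perp)\ge 0$ holds for all $J\in\binom{[n]}{n-k}$, and likewise with ``$\ge 0$'' replaced throughout by ``$> 0$''.

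I expect the only real friction to be in pinning down the precise statement of \cite[Lemma 1.11(ii)]{karp17} --- in particular the reordering sign hidden in $I \mapsto I^\perp$ and the exact projective normalization --- and in checking that, once combined with \cref{perpendicular_sum}\ref{perpendicular_sum_product}, all the sign ambiguities cancel, so that one obtains an honest equality rather than mere proportionality. An alternative, self-contained route avoids \cite{karp17} altogether: factor the Gram matrix of $\langle\cdot,\cdot\rangle$ as a diagonal matrix times the antidiagonal sign matrix, so that $V^\perp$ is obtained from the standard orthocomplement of a diagonal rescaling of $V$ by reversing coordinates; the Pl\"{u}cker coordinates of the standard orthocomplement are the complementary ones with a known sign, and tracking that sign together with the sign from reversing $n-k$ rows and the signs on the diagonal shows that the total sign is independent of $I$. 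In either approach, the combinatorial heart of the matter is \cref{perpendicular_sum}\ref{perpendicular_sum_product}.
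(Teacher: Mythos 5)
Your proposal is correct and follows essentially the same route as the paper: reduce to \cite[Lemma 1.11(ii)]{karp17}, account for the diagonal binomial weights and the coordinate reversal (your ``alternative, self-contained route'' is in fact the paper's argument, just with the diagonal rescaling applied at the start rather than the end), and finish with \cref{perpendicular_sum}\ref{perpendicular_sum_product}. The one point you flag but leave unverified --- that the reordering signs hidden in $I \mapsto I^\perp$ are constant in $I$ --- is exactly what the paper settles via \cref{perpendicular_sum}\ref{perpendicular_sum_sum}, since it shows $(-1)^{\sumof{I} + \sumof{I^\perp}}$ depends only on $k$ and $n$.
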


\begin{proof}
Let $W\in\Gr_{n-k,n}(\mathbb{C})$ be the space perpendicular to $V$ under the standard bilinear pairing $(a,b) := \sum_{i=1}^na_ib_i$. By \cite[Lemma 1.11(ii)]{karp17}, we have
$$
\Delta_I(V) = (-1)^{\sumof{I}}\Delta_{[n]\setminus I}(W) \quad \text{ for all } I\in\textstyle\binom{[n]}{k}.
$$
Now observe that given an $n\times (n-k)$ matrix representing $W$, we obtain an $n\times (n-k)$ matrix representing $V^\perp$ by reversing the order of the rows, and then multiplying row $i$ by $(-1)^{i-1}\binom{n-1}{i-1}$ for $1 \le i \le n$. Therefore
$$
\Delta_I(V) = \frac{(-1)^{\sumof{I} + \sumof{I^\perp}}}{\prod_{i\in I^\perp}\textstyle\binom{n-1}{i-1}}\Delta_{I^\perp}(V^\perp) \quad \text{ for all } I\in\textstyle\binom{[n]}{k}.
$$
It now suffices to show that $\displaystyle\frac{(-1)^{\sumof{I} + \sumof{I^\perp}}}{\prod_{i\in I^\perp}\textstyle\binom{n-1}{i-1}}$ equals $\displaystyle\frac{\vand{I^\perp}}{\vand{I}}$, up to multiplication by a constant which depends only on $k$ and $n$ (but not $I$). This follows from \cref{perpendicular_sum}.
\end{proof}

\begin{cor}\label{perpendicular_wronskian}
We have $\Wr(V) = \Wr(V^\perp)$ for all $V\in\Gr_{k,n}(\mathbb{C})$.
\end{cor}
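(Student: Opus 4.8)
The plan is to compare the explicit Pl\"{u}cker expansions of $\Wr(V)$ and $\Wr(V^\perp)$ given by \cref{wronskian_pluckers}, matching them term by term via the correspondence $I\leftrightarrow I^\perp$. Concretely, fix a matrix representing $V$ (and the induced one representing $V^\perp$), so that the Pl\"{u}cker coordinates are genuine numbers rather than only projective coordinates. By \cref{wronskian_pluckers} applied to $V\in\Gr_{k,n}(\mathbb{C})$ and to $V^\perp\in\Gr_{n-k,n}(\mathbb{C})$, we have
$$
\Wr(V) = \sum_{I\in\binom{[n]}{k}}\vand{I}\Delta_I(V)\,x^{\sumof{I}-\binom{k+1}{2}}, \qquad \Wr(V^\perp) = \sum_{J\in\binom{[n]}{n-k}}\vand{J}\Delta_J(V^\perp)\,x^{\sumof{J}-\binom{n-k+1}{2}}.
$$

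First I would observe that the map $I\mapsto I^\perp$ is a bijection from $\binom{[n]}{k}$ to $\binom{[n]}{n-k}$: it is the composition of set-complementation $I\mapsto [n]\setminus I$ with the order-reversing bijection $i\mapsto n+1-i$ of $[n]$, both of which are bijections. Hence I may reindex the sum for $\Wr(V^\perp)$ by writing $J = I^\perp$ with $I$ ranging over $\binom{[n]}{k}$:
$$
\Wr(V^\perp) = \sum_{I\in\binom{[n]}{k}}\vand{I^\perp}\Delta_{I^\perp}(V^\perp)\,x^{\sumof{I^\perp}-\binom{n-k+1}{2}}.
$$
Now I would compare this with the expansion of $\Wr(V)$ term by term. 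The coefficients agree by \cref{perpendicular_pluckers}, which gives $\vand{I}\Delta_I(V) = \vand{I^\perp}\Delta_{I^\perp}(V^\perp)$ for every $I\in\binom{[n]}{k}$. The exponents of $x$ agree by \cref{perpendicular_sum}\ref{perpendicular_sum_sum}, which gives $\sumof{I}-\binom{k+1}{2} = \sumof{I^\perp}-\binom{n-k+1}{2}$. Therefore the two sums are identical monomial by monomial, so $\Wr(V) = \Wr(V^\perp)$ (with the convention on zeros at $\infty$ automatically consistent since both live in $\mathbb{P}(\mathbb{C}[x]_{\le k(n-k)})$).

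There is essentially no obstacle here: the content has already been packaged into \cref{wronskian_pluckers,perpendicular_pluckers}, and \cref{perpendicular_sum}\ref{perpendicular_sum_sum}. The only point requiring a sentence of justification is that $I\mapsto I^\perp$ is a bijection between the relevant index sets, which is immediate from \cref{defn_pairing}. One should also remark that $\Wr(V)$ and $\Wr(V^\perp)$ a priori lie in the same projective space $\mathbb{P}(\mathbb{C}[x]_{\le k(n-k)})$ (since $k(n-k) = (n-k)k$), so the asserted equality is meaningful, and the choice of representing matrix only affects both sides by the same overall scalar.
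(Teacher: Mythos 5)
Your proposal is correct and is exactly the paper's argument: the paper's proof is the one-line citation of \cref{wronskian_pluckers}, \cref{perpendicular_sum}\ref{perpendicular_sum_sum}, and \cref{perpendicular_pluckers}, which you have simply written out in full, including the (correct) observation that $I\mapsto I^\perp$ is a bijection and that the equality is one of projective classes. No issues.
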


\begin{proof}
This follows from \cref{wronskian_pluckers}, \cref{perpendicular_sum}\ref{perpendicular_sum_sum}, and \cref{perpendicular_pluckers}.
\end{proof}

For example, we can verify that for the element $V\in\Gr_{2,4}(\mathbb{C})$ from \cref{eg_pairing}, we have $\Wr(V) = \Wr(V^\perp)$ (and observe that $V\neq V^\perp$ unless $a = -3d$).

\subsection{Wronskian formulation}\label{sec_wronskian_formulation}
We now explain how to translate the Schubert problem \eqref{schubert_problem} considered by Shapiro and Shapiro into a statement about Wronskian polynomials. While this reformulation is well-known, we caution that some of the details of our exposition are non-standard, since we use the bilinear pairing \eqref{pairing}. For an alternative exposition based on the standard bilinear pairing $(a,b) := \sum_{i=1}^na_ib_i$, see \cite[Section 10.1]{sottile11} or \cite[Section 2.3]{garcia-puente_hein_hillar_martin_del_campo_ruffo_sottile_teitler12}.
\begin{defn}\label{defn_plane_span}
Let $0 \le k \le n$, and recall the rational normal curve $\rnc$ from \eqref{rational_normal_curve}. Let $X\subseteq\mathbb{P}^1(\mathbb{C})$ be a multiset of size $k$, consisting of the elements $x_1, \dots, x_m\in\mathbb{P}^1(\mathbb{C})$ with multiplicities $p_1, \dots, p_m$, where $p_1 + \cdots + p_m = k$. Define $\Rnc{X}\in\Gr_{k,n}(\mathbb{C})$ to be the span of $\rnc(x_i), \rnc'(x_i), \dots, \rnc^{(p_i-1)}(x_i)$ for all $1 \le i \le m$, where if $x_i = \infty$, we define $\rnc^{(j)}(\infty)$ to be the unit vector with a $1$ in entry $j+1$. (It will follow from \cref{plane_span_zeros} that $\Rnc{X}$ has dimension $k$.) We also define $\zeros{X}\in\Gr_{n-k,n}(\mathbb{C})$ to be the subspace of $\mathbb{C}[x]_{\le n-1}$ of polynomials with a zero at $x_i$ of multiplicity at least $p_i$, for all $1 \le i \le m$. In particular, if $X$ consists of a single element $x$ of multiplicity $k$, then $\Rnc{X}$ is the osculating plane to $\rnc$ at $x$, and $\zeros{X}$ is the subspace of $\mathbb{C}[x]_{\le n-1}$ of polynomials with a zero of multiplicity at least $k$ at $x$.
\end{defn}

Our definition of the rational normal curve in \eqref{rational_normal_curve} was chosen to be compatible with the bilinear pairing \eqref{pairing}: 
\begin{lem}\label{plane_span_zeros}
Let $0 \le k \le n$, and let $X\subseteq\mathbb{P}^1(\mathbb{C})$ be a multiset of size $k$. Then
\begin{gather*}
(\Rnc{X})^\perp = \zeros{-X}.
\end{gather*}

\end{lem}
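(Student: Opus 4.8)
The plan is to reduce the whole statement to one computation --- the value of the pairing $\langle\rnc(y),f\rangle$ for a point $\rnc(y)$ of the rational normal curve and a polynomial $f=a_1+a_2x+\cdots+a_nx^{n-1}$ in $\mathbb{C}[x]_{\le n-1}$ --- and then differentiate in the curve parameter $y$.

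First I would substitute $\rnc(y)=\big(\binom{n-1}{i-1}y^{n-i}\big)_{i=1}^n$ and $(a_1,\dots,a_n)$ directly into the definition \eqref{pairing}. The binomial coefficients cancel, and after reindexing by $j=n+1-i$ one obtains $\langle\rnc(y),f\rangle=(-1)^{n+1}f(-y)$; this is exactly why the conventions in \eqref{rational_normal_curve} and \eqref{pairing} were set up as they were, and it is the origin of the ``$-X$'' in the statement. Since the pairing is bilinear and $y$ occurs only through the coordinates of $\rnc(y)$, differentiation in $y$ commutes with the pairing, so $\langle\rnc^{(p)}(y),f\rangle=(-1)^{n+1}\frac{d^p}{dy^p}f(-y)=(-1)^{n+1+p}f^{(p)}(-y)$ for every $p\ge 0$. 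For a point at $\infty$ I would instead use the convention $\rnc^{(j)}(\infty)=e_{j+1}$: then $\langle e_{j+1},f\rangle$ equals, up to a nonzero scalar, the coefficient of $x^{n-j-1}$ in $f$, so $f$ is orthogonal to $e_1,\dots,e_p$ exactly when $\deg f\le n-1-p$, i.e.\ when $f$ has a zero of multiplicity at least $p$ at $\infty=-\infty$.

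Given these facts the lemma follows at once. Writing $X$ as the multiset of points $x_1,\dots,x_m$ with multiplicities $p_1,\dots,p_m$, a polynomial $f$ lies in $(\Rnc{X})^\perp$ if and only if it is orthogonal to each spanning vector $\rnc^{(p)}(x_i)$ with $0\le p\le p_i-1$; by the computation above this says $f^{(p)}(-x_i)=0$ for all such $p$ and $i$ (reading off vanishing of the appropriate coefficients of $f$ in the case $x_i=\infty$), which in turn is equivalent to $f$ having a zero of multiplicity at least $p_i$ at $-x_i$ for each $i$, i.e.\ to $f\in\zeros{-X}$. Since $\zeros{-X}$ has dimension $n-k$ and the pairing \eqref{pairing} is nondegenerate, the equality $(\Rnc{X})^\perp=\zeros{-X}$ also forces $\dim\Rnc{X}=k$, as promised in \cref{defn_plane_span}.

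I do not anticipate a genuine obstacle here: the only content is the sign-and-reindexing bookkeeping in the first pairing computation together with the separate treatment of $\infty$, plus the standard facts that a zero of multiplicity at least $p$ at a finite point $r$ is the simultaneous vanishing of $f,f',\dots,f^{(p-1)}$ at $r$ and that the resulting $k$ evaluation conditions cut out a subspace of codimension exactly $k$ in $\mathbb{C}[x]_{\le n-1}$ (Hermite interpolation).
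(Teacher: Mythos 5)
Your proposal is correct and is essentially the paper's own argument: the paper's proof consists precisely of the identity $\langle\gamma^{(j)}(x),f\rangle=(-1)^{n-j-1}f^{(j)}(-x)$ (which matches your $(-1)^{n+1+p}f^{(p)}(-y)$ since the signs agree mod $2$) together with the coefficient computation at $\infty$, and you have simply written out the substitution, reindexing, and the concluding equivalence in full. The dimension remark at the end correctly accounts for the parenthetical claim in \cref{defn_plane_span} that $\dim\Rnc{X}=k$.
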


\begin{proof}
This follows from the fact that for all $f = \sum_{i=1}^na_ix^{i-1}\in\mathbb{C}[x]_{\le n-1}$ and $0 \le j \le n-1$, we have
$$
\langle\gamma^{(j)}(x), f\rangle = (-1)^{n-j-1}f^{(j)}(-x) \quad \text{ for all } x\in\mathbb{C},
$$
and $\langle\gamma^{(j)}(\infty), f\rangle$ equals $a_{n-j}$ modulo a nonzero constant.
\end{proof}

\begin{eg}\label{eg_plane_span}
Let $k := 3$ and $n := 5$, and let $X := \{0, 0, 1\}$. Then $\Rnc{X}\in\Gr_{3,5}(\mathbb{C})$ is spanned by
$$
(0,0,0,0,1), \quad (0,0,0,4,0), \quad \text{ and } \quad (1,4,6,4,1).
$$
We can check that $(\Rnc{X})^\perp$ consists precisely of those elements of $\mathbb{C}[x]_{\le 4}$ with zeros at $0$ and $-1$ of multiplicities at least $2$ and $1$, respectively, in agreement with \cref{plane_span_zeros}.
\end{eg}

\begin{cor}\label{dual_problem}
Let $0 \le k \le n$, let $X\subseteq\mathbb{P}^1(\mathbb{C})$ be a multiset of size $k$, and let $V\in\Gr_{k,n}(\mathbb{C})$. Then
$$
V^\perp\cap\Rnc{X} \neq \{0\} \quad \Leftrightarrow \quad V\cap\zeros{-X}\neq\{0\}.
$$
If $X$ consists of a single element $x$ of multiplicity $k$, then both conditions above are equivalent to $\Wr(V)$ being zero at $-x$.
\end{cor}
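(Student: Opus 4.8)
The plan is that the displayed equivalence is formal linear algebra built on \cref{plane_span_zeros}, while the last sentence follows from the classical relationship between the vanishing of a Wronskian and the vanishing sequence of the corresponding subspace.

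For the displayed equivalence, first I would note that the pairing $\langle\cdot,\cdot\rangle$ of \eqref{pairing} is nondegenerate, since its matrix is anti-diagonal with nonzero entries; hence $\dim U^\perp = n - \dim U$, $U^{\perp\perp} = U$, and $(U_1 + U_2)^\perp = U_1^\perp\cap U_2^\perp$ for all subspaces $U_1, U_2\subseteq\mathbb{C}^n$. By \cref{plane_span_zeros} we have $\zeros{-X} = (\Rnc{X})^\perp$, and $\Rnc{X}$ has dimension $k$ (see \cref{defn_plane_span}). Since $\dim V^\perp + \dim\Rnc{X} = (n-k) + k = n$, I would run the chain
\[
V^\perp\cap\Rnc{X} = \{0\} \;\iff\; V^\perp + \Rnc{X} = \mathbb{C}^n \;\iff\; (V^\perp + \Rnc{X})^\perp = \{0\} \;\iff\; V\cap(\Rnc{X})^\perp = \{0\} \;\iff\; V\cap\zeros{-X} = \{0\},
\]
and take the contrapositive. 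Here the first $\iff$ uses $\dim(V^\perp\cap\Rnc{X}) + \dim(V^\perp + \Rnc{X}) = n$, the second uses nondegeneracy (a subspace $W$ satisfies $W = \mathbb{C}^n$ iff $W^\perp = \{0\}$), and the third uses $(V^\perp)^\perp = V$ together with $(U_1+U_2)^\perp = U_1^\perp\cap U_2^\perp$.

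For the last sentence, by the displayed equivalence it suffices to show that, when $X$ is the single point $x$ with multiplicity $k$, the condition $V\cap\zeros{-X}\neq\{0\}$ is equivalent to $\Wr(V)$ vanishing at $-x$; here $\zeros{-X}$ is the space of $f\in\mathbb{C}[x]_{\le n-1}$ vanishing to order at least $k$ at $-x$. For finite $-x$ this is precisely the interpolation statement recalled in the introduction after \eqref{differential_operator}: if $d_1 < \cdots < d_k$ denote the orders of vanishing at $-x$ of the nonzero elements of $V$, then $\Wr(V)$ vanishes at $-x$ to order $\sum_{i=1}^k(d_i - (i-1))$, which is positive exactly when $d_k \ge k$, i.e.\ when some nonzero $f\in V$ vanishes to order $\ge k$ at $-x$; alternatively one can cite \cite{purbhoo10}. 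For $x = \infty$ (so $-x = \infty$), the space $\zeros{-X}$ consists of the polynomials of degree at most $n-1-k$, so $V\cap\zeros{-X}\neq\{0\}$ iff $\Delta_I(V) = 0$ for $I := \{n-k+1, \dots, n\}$; by \cref{wronskian_pluckers} the coefficient of $x^{k(n-k)}$ in $\Wr(V)$ is $\vand{I}\Delta_I(V)$ (this $I$ being the unique one with $\sumof{I} - \binom{k+1}{2} = k(n-k)$), so this holds iff $\deg\Wr(V) < k(n-k)$, i.e.\ $\Wr(V)$ is zero at $\infty$ in the sense of \cref{defn_wronski_map}.

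I do not expect a serious obstacle: the perp-calculus step and the bookkeeping at $\infty$ are routine, and the only substantive input is the classical order-of-vanishing formula for Wronskians, which is either derived from the introduction's interpolation argument or quoted from \cite{purbhoo10}. Indeed, the point of the statement is precisely that it is a corollary of \cref{plane_span_zeros}.
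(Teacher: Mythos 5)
Your proof is correct and follows the same route as the paper, whose entire proof is the single line ``This follows from \cref{plane_span_zeros}''; you have simply made explicit the perp-calculus for the nondegenerate pairing and the standard order-of-vanishing fact for Wronskians (which the paper itself invokes without proof in the introduction, and which for finite points also follows from \cref{detstar_lemma} with $x_1=\cdots=x_k$). The bookkeeping at $\infty$ via $\Delta_{[n]\setminus[n-k]}$ and \cref{wronskian_pluckers} is also right.
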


\begin{proof}
This follows from \cref{plane_span_zeros}.
\end{proof}

Using \cref{dual_problem}, we can translate the Schubert problem \eqref{schubert_problem} into a dual formulation by writing $V = U^\perp$, as we did implicitly in \cref{sec_introduction}. Recall from \cref{perpendicular_pluckers} that the map $\cdot^\perp$ preserves reality, total nonnegativity, and total positivity. We emphasize that in this translation, the Grassmannian $\Gr_{n-k,n}(\mathbb{C})$ (containing $U$) appearing in the Schubert problem \eqref{schubert_problem} becomes identified with the Grassmannian $\Gr_{k,n}(\mathbb{C})$ (containing $V$) of subspaces of $\mathbb{C}[x]_{\le n-1}$ of dimension $k$. In this sense, the parameter $k$ has a consistent usage throughout the statements of \cref{reality}, \cref{disconjugacy_conjecture}, \cref{positive_secant_conjecture}, and \cref{positivity_conjecture}.

\subsection{Action of \texorpdfstring{$\SL_2(\mathbb{C})$}{SL(2)}}\label{sec_SL_action}
We recall an action of $\SL_2(\mathbb{C})$ on some of the objects defined above. We follow \cite[Section 2.1]{purbhoo10}, but employ a different sign convention.
\begin{defn}\label{defn_SL_action}
Let $\SL_2(\mathbb{C})$ act on $\mathbb{P}^1(\mathbb{C})$ in the usual way:
$$
\begin{bmatrix}
a & b \\
c & d
\end{bmatrix}\cdot x := \frac{ax + b}{cx + d} \quad \text{ for all } x\in\mathbb{P}^1(\mathbb{C}).
$$
In particular, $\SL_2(\mathbb{C})$ acts on the multiset of zeros of any polynomial. We define an action of $\SL_2(\mathbb{C})$ on $\mathbb{C}[x]_{\le n-1}$ compatible with the action on zeros:
$$
\begin{bmatrix}
a & b \\
c & d
\end{bmatrix}\cdot f(x) := (-cx + a)^{n-1}f\Big(\frac{dx -b}{-cx + a}\Big).
$$
That is, if $f$ is nonzero and $r_1, \dots, r_{n-1}\in\mathbb{P}^1(\mathbb{C})$ are the zeros of $f$ counted with multiplicity, and $\alpha\in\SL_2(\mathbb{C})$, then $\alpha\cdot r_1, \dots, \alpha\cdot r_{n-1}$ are the zeros of $\alpha\cdot f$. Via \eqref{polynomial_identification} (with $\mathbb{R}$ replaced by $\mathbb{C}$), we obtain an action of $\SL_2(\mathbb{C})$ on $\Gr_{k,n}(\mathbb{C})$. In particular, for $\zeros{X}$ as defined in \cref{defn_plane_span}, we have $\alpha\cdot\zeros{X} = \zeros{\alpha\cdot X}$.
\end{defn}

The compatibility of the $\SL_2(\mathbb{C})$-action extends to Wronskians:
\begin{lem}[{Purbhoo \cite[Proposition 2.1]{purbhoo10}}]\label{wronskian_SL}
For $V\in\Gr_{k,n}(\mathbb{C})$ and $\alpha\in\SL_2(\mathbb{C})$, we have $\Wr(\alpha\cdot V) = \alpha\cdot\Wr(V)$.
\end{lem}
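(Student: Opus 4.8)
The plan is to compute $\Wr(\alpha\cdot V)$ directly from the definition, using two classical identities for how a Wronskian transforms under multiplying all of its arguments by a common factor and under precomposing all of them with a fixed map. Fix a basis $f_1,\dots,f_k$ of $V$. Since $\alpha$ acts $\mathbb{C}$-linearly and invertibly on $\mathbb{C}[x]_{\le n-1}$, the polynomials $\alpha\cdot f_1,\dots,\alpha\cdot f_k$ form a basis of $\alpha\cdot V$, so $\Wr(\alpha\cdot V) = \Wr(\alpha\cdot f_1,\dots,\alpha\cdot f_k)$ up to a nonzero scalar. Writing $\alpha = \left[\begin{smallmatrix} a & b \\ c & d\end{smallmatrix}\right]$, \cref{defn_SL_action} gives $\alpha\cdot f_i = h\cdot(f_i\circ\phi)$ where $h(x) := (-cx+a)^{n-1}$ and $\phi(x) := \frac{dx-b}{-cx+a}$; this is an equality of rational functions on $\mathbb{C}\setminus\{a/c\}$ (on all of $\mathbb{C}$ if $c=0$), and on that set $\phi$ is holomorphic with $\phi'(x) = \frac{ad-bc}{(-cx+a)^2} = \frac{1}{(-cx+a)^2}$, using $\det\alpha = 1$.

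I would then invoke the two identities. First, $\Wr(hg_1,\dots,hg_k) = h^k\,\Wr(g_1,\dots,g_k)$: by the Leibniz rule, the matrix $\big((hg_j)^{(i-1)}\big)_{i,j}$ equals $L\cdot\big(g_j^{(i-1)}\big)_{i,j}$ for a lower-triangular matrix $L$ with constant diagonal $h$, so its determinant picks up a factor $h^k$. Second, the chain rule for Wronskians, $\Wr(g_1\circ\phi,\dots,g_k\circ\phi) = (\phi')^{\binom{k}{2}}\big(\Wr(g_1,\dots,g_k)\circ\phi\big)$: by Fa\`a di Bruno's formula, the matrix $\big((g_j\circ\phi)^{(i-1)}\big)_{i,j}$ equals $N\cdot\big(g_j^{(i-1)}\circ\phi\big)_{i,j}$ for a lower-triangular $N$ whose $i$-th diagonal entry is $(\phi')^{i-1}$, so $\det N = (\phi')^{0+1+\cdots+(k-1)} = (\phi')^{\binom{k}{2}}$. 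Applying both to $\alpha\cdot f_i = h\cdot(f_i\circ\phi)$, and computing $h^k(\phi')^{\binom{k}{2}} = (-cx+a)^{k(n-1)-k(k-1)} = (-cx+a)^{k(n-k)}$, we obtain
\[
\Wr(\alpha\cdot V)(x) \;=\; (-cx+a)^{k(n-k)}\,\Wr(V)\!\Big(\tfrac{dx-b}{-cx+a}\Big)
\]
as rational functions on $\mathbb{C}\setminus\{a/c\}$. The left-hand side is a polynomial, being a Wronskian of polynomials, and the right-hand side is exactly $\alpha\cdot\Wr(V)$ for the $\SL_2(\mathbb{C})$-action of \cref{defn_SL_action} on $\mathbb{C}[x]_{\le k(n-k)}$, i.e.\ with the exponent $n-1$ replaced by $k(n-k)$; in particular it too is a polynomial. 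Two polynomials agreeing off a single point coincide, so $\Wr(\alpha\cdot V) = \alpha\cdot\Wr(V)$ in $\mathbb{P}(\mathbb{C}[x]_{\le k(n-k)})$.

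The step that requires the most care is the chain rule for Wronskians: one needs that in the Fa\`a di Bruno expansion of $(g\circ\phi)^{(m)}$ the coefficient of $g^{(m)}(\phi)$ is precisely $(\phi')^m$, with all remaining terms involving only $g^{(l)}(\phi)$ for $l<m$, which is what makes $N$ lower triangular with the stated diagonal. This identity is classical and one could simply cite it. The only other points of attention are the exponent bookkeeping --- the computation works because $k(n-1) - 2\binom{k}{2} = k(n-k)$, which is the same identity behind $\deg\Wr(V)\le k(n-k)$ --- and keeping track that the action on the target be taken on $\mathbb{C}[x]_{\le k(n-k)}$. As a consistency check, or as an alternative, one may instead verify the identity on a generating set of $\SL_2(\mathbb{C})$ and bootstrap via $\Wr\big((\alpha\beta)\cdot V\big) = \Wr\big(\alpha\cdot(\beta\cdot V)\big)$: for translations $f(x)\mapsto f(x-b)$ and scalings $f(x)\mapsto t^{n-1}f(x/t^2)$ the identity is immediate from the chain rule, while the inversion $f(x)\mapsto x^{n-1}f(-1/x)$ is the case $a=d=0$, $b=1$, $c=-1$ of the computation above.
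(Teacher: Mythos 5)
Your proof is correct: the paper itself gives no argument for this lemma (it is quoted from Purbhoo), and your computation via the two classical transformation rules $\Wr(hg_1,\dots,hg_k)=h^k\Wr(g_1,\dots,g_k)$ and $\Wr(g_1\circ\phi,\dots,g_k\circ\phi)=(\phi')^{\binom{k}{2}}\bigl(\Wr(g_1,\dots,g_k)\circ\phi\bigr)$, together with the exponent identity $k(n-1)-2\binom{k}{2}=k(n-k)$, is exactly the standard proof given in the cited source. Your care in noting that the action on the target $\mathbb{C}[x]_{\le k(n-k)}$ uses the exponent $k(n-k)$ in place of $n-1$ is precisely the point that makes the statement come out on the nose rather than merely up to a Jacobian factor.
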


We also consider the operator on $\mathbb{C}^n$ which reverses the order of the ground set.
\begin{defn}\label{defn_rev}
Given $n\in\mathbb{N}$, define the linear involution $\rev$ on $\mathbb{C}^n$ by
$$
\rev(a_1, \dots, a_n) := (a_n, \dots, a_1) \quad \text{ for all } a\in\mathbb{C}^n.
$$
Equivalently, $\rev$ acts on $\mathbb{C}[x]_{\le n-1}$ by
$$
\rev(f(x)) := x^{n-1}f(\textstyle\frac{1}{x}) \quad \text{ for all } f\in\mathbb{C}[x]_{\le n-1}.
$$
This induces an action $\rev$ on $\Gr_{k,n}(\mathbb{C})$.
\end{defn}

\begin{lem}\label{wronskian_rev}
For $V\in\Gr_{k,n}(\mathbb{C})$, we have $\rev(\Wr(V)) = \Wr(\rev(V))$.
\end{lem}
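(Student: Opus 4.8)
The plan is to compute both $\rev(\Wr(V))$ and $\Wr(\rev(V))$ directly in terms of the Pl\"{u}cker coordinates of $V$ via \cref{wronskian_pluckers}, and to check that the two resulting polynomials agree up to an overall scalar $(-1)^{\binom{k}{2}}$, which is irrelevant since $\Wr$ is only defined modulo $\mathbb{C}^\times$. This is the same strategy as the proof of \cref{perpendicular_wronskian}: reduce to the explicit Pl\"{u}cker formula plus one elementary arithmetic identity about exponents.

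First I would record how $\rev$ interacts with the two ingredients appearing in \cref{wronskian_pluckers}. Writing $\bar I := \{n+1-i : i\in I\}$ for $I\in\binom{[n]}{k}$: reversing the rows of an $n\times k$ matrix representing $V$ shows $\Delta_{\bar I}(\rev(V)) = (-1)^{\binom{k}{2}}\Delta_I(V)$, the sign being that of the order-reversing permutation on $k$ elements; since the multiset of gaps $\{j-i : i<j\}$ is unchanged under $I\mapsto\bar I$, \cref{defn_vandermonde_constant} gives $\vand{\bar I} = \vand{I}$; and clearly $\sumof{\bar I} = k(n+1) - \sumof{I}$.

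Next, since $\Wr(V)\in\mathbb{P}(\mathbb{C}[x]_{\le k(n-k)})$, the operator $\rev$ acts on it by $g(x)\mapsto x^{k(n-k)}g(1/x)$, so \cref{wronskian_pluckers} gives
\[
\rev(\Wr(V)) = \sum_{I\in\binom{[n]}{k}}\vand{I}\Delta_I(V)\,x^{k(n-k) - \sumof{I} + \binom{k+1}{2}}.
\]
Applying \cref{wronskian_pluckers} to $\rev(V)$, reindexing the sum by $I=\bar J$, and using the three facts above yields
\[
\Wr(\rev(V)) = (-1)^{\binom{k}{2}}\sum_{I\in\binom{[n]}{k}}\vand{I}\Delta_I(V)\,x^{k(n+1) - \sumof{I} - \binom{k+1}{2}}.
\]
I would then verify the identity $k(n-k) + \binom{k+1}{2} = k(n+1) - \binom{k+1}{2}$ (both equal $kn - \binom{k}{2}$), so the two sums have matching exponents term by term. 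Hence $\Wr(\rev(V)) = (-1)^{\binom{k}{2}}\rev(\Wr(V))$, giving equality in $\mathbb{P}(\mathbb{C}[x]_{\le k(n-k)})$.

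There is no genuinely hard step: the argument is pure bookkeeping, and the only point requiring care is the sign $(-1)^{\binom{k}{2}}$ from row reversal, which cancels projectively. A slightly more conceptual alternative avoids \cref{wronskian_pluckers}: for a basis $f_1,\dots,f_k$ of $V$ one has $\rev(f_j)(x) = x^{n-1}f_j(1/x)$, so combining the classical Wronskian identities $\Wr(hf_1,\dots,hf_k) = h^k\Wr(f_1,\dots,f_k)$ and $\Wr_x(f_1\circ\phi,\dots,f_k\circ\phi) = (\phi')^{\binom{k}{2}}(\Wr(f_1,\dots,f_k)\circ\phi)$ with $h(x)=x^{n-1}$ and $\phi(x)=1/x$ produces the factor $x^{k(n-1)}\cdot x^{-k(k-1)} = x^{k(n-k)}$ and leads to the same conclusion; I would use whichever of the two presentations fits the surrounding exposition more cleanly.
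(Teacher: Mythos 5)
Your proposal is correct, and both of your routes coincide with the paper's: the detailed Pl\"{u}cker computation you carry out is exactly the paper's ``alternatively, apply \cref{wronskian_pluckers}'' option, while your more conceptual alternative (change of variable plus prefactor in the Wronskian) is the substance of the paper's primary one-line proof via \cref{wronskian_SL}, writing $\rev$ as an $\SL_2$ element acting on polynomials. The bookkeeping of the sign $(-1)^{\binom{k}{2}}$ and the exponent identity checks out.
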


\begin{proof}
This follows from \cref{wronskian_SL}, since $\rev(f(x)) =  \scalebox{0.8}{$\begin{bmatrix}0 & 1 \\ -1 & 0\end{bmatrix}$}\cdot f(-x)$. Alternatively, we can apply \cref{wronskian_pluckers}.
\end{proof}

\subsection{Shift operator}\label{sec_shift}
We introduce the shift operator $x \mapsto x + t$ on polynomials, and discuss its connection to total positivity. Some of the results in this subsection have appeared before in the literature (see e.g.\ \cite[Section 4.2]{fallat01}), but for completeness, we give proofs.
\begin{defn}\label{defn_shift}
Given $n\in\mathbb{N}$ and $t\in\mathbb{C}$, define the {\itshape shift operator} $\shift{t}$ on $\mathbb{C}[x]_{\le n-1}$ by
$$
\shift{t}(f(x)) := f(x+t) \quad \text{ for all } f\in\mathbb{C}[x]_{\le n-1}.
$$
That is, $\shift{t}$ acts as the element $\scalebox{0.8}{$\begin{bmatrix}1 & -t \\ 0 & 1\end{bmatrix}$} \in \SL_2(\mathbb{C})$, in the sense of \cref{defn_SL_action}. We also let $D := \frac{d}{dx}$ denote the differentiation operator on $\mathbb{C}[x]_{\le n-1}$.
\end{defn}

Under the identification \eqref{polynomial_identification}, we may regard linear operators on $\mathbb{C}[x]_{\le n-1}$ as $n\times n$ matrices. The operator $D$ is given by the matrix whose nonzero entries are precisely $D_{i,i+1} = i$ for $1 \le i \le n-1$. For example, when $n=3$, we have $D = \scalebox{0.8}{$\begin{bmatrix}0 & 1 & 0 \\ 0 & 0 & 2 \\ 0 & 0 & 0\end{bmatrix}$}$.
\begin{lem}\label{shift_properties}
Let $n\in\mathbb{N}$ and $t\in\mathbb{C}$. Then the following three operators on $\mathbb{C}[x]_{\le n-1}$ are equal:
\begin{enumerate}[label=(\roman*), leftmargin=*, itemsep=2pt]
\item\label{shift_properties_shift} the shift operator $\shift{t}$;
\item\label{shift_properties_derivative} the exponential $\exp(tD)$; and
\item\label{shift_properties_binomial} the operator represented by the matrix $(\binom{j-1}{i-1}t^{j-i})_{1 \le i,j \le n}$.
\end{enumerate}

\end{lem}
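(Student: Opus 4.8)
The plan is to exploit that all three objects are linear operators on the finite-dimensional space $\mathbb{C}[x]_{\le n-1}$, so it suffices to check that they agree on the monomial basis $1, x, x^2, \dots, x^{n-1}$. First I would fix the bookkeeping: under the identification \eqref{polynomial_identification} and the matrix conventions of \cref{defn_shift}, the operator given by a matrix $M = (M_{i,j})_{1 \le i,j \le n}$ sends $x^{j-1}$ to $\sum_{i=1}^{n} M_{i,j}x^{i-1}$; equivalently, column $j$ of $M$ records the coordinates of the image of $x^{j-1}$. With this in hand, everything reduces to computing how the shift operator and $\exp(tD)$ act on $x^{j-1}$.

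For the shift operator, the binomial theorem gives
$$
\shift{t}(x^{j-1}) = (x+t)^{j-1} = \sum_{i=1}^{j}\binom{j-1}{i-1}t^{j-i}x^{i-1},
$$
so the coefficient of $x^{i-1}$ is $\binom{j-1}{i-1}t^{j-i}$ (read as $0$ when $i > j$), which is exactly entry $(i,j)$ of the matrix in \ref{shift_properties_binomial}. This identifies \ref{shift_properties_shift} with \ref{shift_properties_binomial}.

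For $\exp(tD)$, I would first note that $D$ lowers degree by one, hence $D^n = 0$ on $\mathbb{C}[x]_{\le n-1}$, so $\exp(tD) = \sum_{m=0}^{n-1}\frac{t^m}{m!}D^m$ is an honest finite sum and a well-defined operator. Using $D^m(x^{j-1}) = \frac{(j-1)!}{(j-1-m)!}x^{j-1-m}$ for $0 \le m \le j-1$ and $D^m(x^{j-1}) = 0$ for $m \ge j$, and then substituting $i = j-m$, one gets
$$
\exp(tD)(x^{j-1}) = \sum_{m=0}^{j-1}\frac{t^m}{m!}\frac{(j-1)!}{(j-1-m)!}x^{j-1-m} = \sum_{i=1}^{j}\binom{j-1}{i-1}t^{j-i}x^{i-1},
$$
the same expression as above. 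Alternatively, \ref{shift_properties_shift} $=$ \ref{shift_properties_derivative} can be obtained in one line from Taylor's theorem, since for any polynomial $f$ we have $f(x+t) = \sum_{m \ge 0}\frac{t^m}{m!}f^{(m)}(x) = (\exp(tD)f)(x)$, with the sum finite.

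There is no real obstacle here; the lemma is a routine verification. The only points requiring a little care are the index conventions relating the operator to its matrix (column $j$ $\leftrightarrow$ image of $x^{j-1}$) and the remark that the exponential series terminates because $D$ is nilpotent on $\mathbb{C}[x]_{\le n-1}$.
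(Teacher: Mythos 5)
Your proof is correct and follows essentially the same route as the paper: both verify the three operators agree on the monomial basis $x^{j-1}$, using the binomial theorem to match $\shift{t}$ with the matrix in \ref{shift_properties_binomial} and a direct expansion of $\exp(tD)(x^{j-1})$ for the equality with \ref{shift_properties_derivative}. Your extra remarks on nilpotency of $D$ and the column-indexing convention are fine but not a substantive departure.
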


For example, when $n=3$, we have $\shift{t} = \exp(tD) = \scalebox{0.8}{$\begin{bmatrix}1 & t & t^2 \\ 0 & 1 & 2t \\ 0 & 0 & 1\end{bmatrix}$}$.

\begin{proof}
\ref{shift_properties_shift} $=$ \ref{shift_properties_derivative}: It suffices to show that $\shift{t}(x^{j-1}) = \exp(tD)(x^{j-1})$ for $1 \le j \le n$. We have
$$
\exp(tD)(x^{j-1}) = \sum_{i\ge 0}\frac{(tD)^i}{i!}x^{j-1} = \sum_{i\ge 0}\binom{j-1}{i}t^ix^{j-i-1} = (x+t)^{j-1} = \shift{t}(x^{j-1}).
$$

\ref{shift_properties_shift} $=$ \ref{shift_properties_binomial}: The $(i,j)$-entry of the matrix representing $\shift{t}$ is the coefficient of $x^{i-1}$ in $\shift{t}(x^{j-1})$, which equals $\binom{j-1}{i-1}t^{j-i}$.
\end{proof}

\begin{rmk}\label{motivation_remark}
We observe that for $V\in\Gr_{k,n}(\mathbb{C})$, we have $\Wr(V) = \Delta_{[k]}(\shift{x}(V))$. Using the Cauchy--Binet identity \eqref{cauchy-binet} and the expression for $\shift{x}$ from \cref{shift_properties}\ref{shift_properties_binomial}, this leads to the formula for $\Wr(V)$ in \cref{wronskian_pluckers}.
\end{rmk}

We recall from \cref{sec_notation} the notion of total positivity for triangular matrices.
\begin{lem}\label{shift_positivity}
Let $0 \le k \le n$ and $I,J\in\binom{[n]}{k}$.
\begin{enumerate}[label=(\roman*), leftmargin=*, itemsep=2pt]
\item\label{shift_positivity_zero} If $I \nleq J$, then $\det((\shift{t})_{I,J}) = 0$ for all $t\in\mathbb{C}$.
\item\label{shift_positivity_positive} If $I \le J$, then there exists $c > 0$ such that $\det((\shift{t})_{I,J}) = ct^{\sumof{J} - \sumof{I}}$ for all $t\in\mathbb{C}$.
\end{enumerate}
In particular, $\shift{t}$ is a totally positive upper-triangular $n\times n$ matrix for all $t > 0$.
\end{lem}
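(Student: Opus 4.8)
The plan is to strip off the powers of $t$ and reduce everything to the binomial matrix $M := \big(\binom{j-1}{i-1}\big)_{1\le i,j\le n}$, which by \cref{shift_properties}\ref{shift_properties_binomial} is exactly $\shift{1}$. Write $I = \{i_1 < \cdots < i_k\}$ and $J = \{j_1 < \cdots < j_k\}$. By \cref{shift_properties}\ref{shift_properties_binomial} the $(a,b)$-entry of $(\shift{t})_{I,J}$ is $\binom{j_b-1}{i_a-1}t^{j_b-i_a}$, so every term of its Leibniz expansion carries the factor $t^{\sum_b j_b - \sum_a i_a} = t^{\sumof{J}-\sumof{I}}$; pulling it out gives
\begin{align*}
\det((\shift{t})_{I,J}) = c\, t^{\sumof{J}-\sumof{I}}, \quad \text{where } c := \det\left(\binom{j_b-1}{i_a-1}\right)_{1\le a,b\le k} = \det(M_{I,J}) \in \mathbb{Z}
\end{align*}
is a constant independent of $t$. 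It thus suffices to prove that $c = 0$ when $I\nleq J$ and that $c$ is a positive integer when $I\le J$; then parts \ref{shift_positivity_zero} and \ref{shift_positivity_positive} follow, and the final assertion is the case $t > 0$ combined with the fact (visible from \cref{shift_properties}\ref{shift_properties_binomial}) that $\shift{t}$ is upper-triangular.

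For the vanishing, suppose $I\nleq J$ and pick $a$ with $i_a > j_a$. Then for all $r\ge a$ and $b\le a$ we have $i_r\ge i_a > j_a\ge j_b$, so $\binom{j_b-1}{i_r-1} = 0$; hence the $k-a+1$ rows of $M_{I,J}$ indexed by $\{a, \dots, k\}$ are supported on the $k-a$ columns indexed by $\{a+1,\dots,k\}$, and are therefore linearly dependent, so $c = 0$. (The same zero pattern occurs in $(\shift{t})_{I,J}$, so this also proves part \ref{shift_positivity_zero} directly.)

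For the positivity I would use the Lindstr\"{o}m--Gessel--Viennot lemma. In the lattice $\mathbb{Z}^2$ with unit steps $(1,0)$ and $(0,1)$, the number of monotone paths from the source $A_a := (1-i_a,\, i_a)$ to the sink $B_b := (0,\, j_b)$ equals $\binom{j_b-1}{i_a-1}$ (understood to be $0$ when $i_a > j_b$). I would check that these sources and sinks are in ``LGV position'', so that only the identity permutation contributes to $\det(M_{I,J})$: all sources lie on the line $x+y = 1$ and all sinks on the line $x = 0$, and whenever $a < a'$ and $b > b'$ any monotone path $A_a\to B_b$ meets any monotone path $A_{a'}\to B_{b'}$ — recording each path by its $x$-coordinate on the successive anti-diagonals $x+y=\ell$, the first path starts strictly right of the second (at $\ell = 1$) and ends weakly left of it (at $\ell = j_{b'} < j_b$), while both $x$-coordinates change by at most $1$ per level, so they must coincide somewhere. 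Then $c = \det(M_{I,J})$ equals the number of vertex-disjoint families of monotone paths $A_a\to B_a$ ($1\le a\le k$): a nonnegative integer, positive precisely when such a family exists. When $I\le J$, so $i_a\le j_a$ for all $a$, the ``L-shaped'' paths that go straight up from $A_a$ to $(1-i_a,\, j_a)$ and then straight right to $B_a$ form such a family — their vertical legs occupy the pairwise distinct columns $x = 1-i_a$ and their horizontal legs the pairwise distinct rows $y = j_a$, and one checks no leg of one path meets a leg of another. Hence $c > 0$.

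The main obstacle is the bookkeeping in the positivity step: verifying the ``LGV position'' of the sources and sinks and exhibiting the vertex-disjoint family — each is elementary but must be carried out with care. (For the final assertion alone one could instead note that $\shift{t} = \exp(tD)$ with $D = \sum_{i=1}^{n-1} i\, E_{i,i+1}$ a positive combination of the Chevalley generators of $\mathfrak{sl}_n$, hence $\shift{t}$ lies in the totally positive upper-triangular subgroup for $t > 0$; but the lattice-path argument additionally pins down the exact form $c\,t^{\sumof{J}-\sumof{I}}$ required in parts \ref{shift_positivity_zero} and \ref{shift_positivity_positive}.)
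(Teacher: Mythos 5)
Your proof is correct and coincides with the paper's second (alternative) proof: factor out $t^{\sumof{J}-\sumof{I}}$ via \cref{shift_properties}\ref{shift_properties_binomial}, then evaluate the binomial minor $\det(M_{I,J})$ by the Lindstr\"{o}m--Gessel--Viennot lemma. You merely use a lattice-path model in place of the paper's staircase network, and you spell out the details (the crossing condition, the explicit vertex-disjoint family, and the direct vanishing argument for $I \nleq J$) that the paper leaves to the reader.
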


We note that in \cref{shift_positivity}\ref{shift_positivity_positive}, when $I = [k]$ we have $c = \vand{J}$ (cf.\ \cref{motivation_remark}).

\begin{proof}
This follows from \cref{shift_properties} along with known results about totally positive upper-triangular matrices; we give two proofs. Firstly, a result of Lusztig \ \cite[Proposition 5.9(a)]{lusztig94} states that if $A$ is any $n\times n$ matrix such that $A_{i,i+1} > 0$ for $1 \le i \le n-1$ and all other entries of $A$ are zero, then $\exp(tA)$ is a totally positive upper-triangular matrix for all $t > 0$. Also, note that $\det(\exp(tA)_{I,J})$ is a monomial in $t$ of degree $\sumof{J} - \sumof{I}$. It now suffices to observe that the differentiation operator $D$ is of the form $A$, and $\shift{t} = \exp(tD)$.

Alternatively, let $B$ denote the $n\times n$ matrix with entries $B_{i,j} := \binom{j-1}{i-1}$ for $1 \le i,j \le n$, so that $\det((\shift{t})_{I,J}) = \det(B_{I,J})t^{\sumof{J} - \sumof{I}}$ by \cref{shift_properties}. We must show that $B$ is a totally positive upper-triangular matrix. Observe that $B_{i,j}$ is the number of paths from source $i$ to sink $j$ in the network \vspace*{3pt}
$$
\quad\begin{tikzpicture}[baseline=(current bounding box.center), scale=0.75]
\tikzstyle{out1}=[inner sep=0,minimum size=1.2mm,circle,draw=black,fill=black,semithick]
\pgfmathsetmacro{\s}{1.20};
\pgfmathsetmacro{\eps}{0.42};
\pgfmathsetmacro{\radius}{0.84};
\foreach \y in {2,...,5}{
\foreach \x in {2,...,\y}{
\node(v\x\y)[out1]at(\x*\s,\y*\s){};
\path[-latex',thick](\x*\s-\s,\y*\s)edge(v\x\y);}}
\foreach \y in {1,...,5}{
\node(source\y)[out1]at(1*\s,\y*\s){};}
\foreach \y in {1,...,4}{
\node(sink\y)[out1]at(5*\s,\y*\s){};
\path[-latex',thick](\y*\s,\y*\s)edge(sink\y);}
\foreach \y in {1,...,3}{
\node[inner sep=0]at(1-\eps,\y*\s){$\y$};
\node[inner sep=0]at(5*\s+\eps,\y*\s){$\y$};}
\node[inner sep=0]at(1-\eps,4*\s+0.06){$\vdots$};
\node[inner sep=0]at(5*\s+\eps,4*\s+0.06){$\vdots$};
\node[inner sep=0]at(1-\eps,5*\s){$n$};
\node[inner sep=0]at(5*\s+\eps,5*\s){$n$};
\foreach \y in {2,...,5}{
\foreach \x in {2,...,\y}{
\path[-latex',thick](\x*\s-\s,\y*\s-\s)edge(v\x\y);}}
\end{tikzpicture}\quad.\vspace*{3pt}
$$
Therefore by an involution principle due to Karlin and McGregor, Lindstr\"{o}m, and Gessel and Viennot (see e.g.\ \cite[Lemma 1]{fomin_zelevinsky00a}), $\det(B_{I,J})$ is the number of families of $k$ pairwise nonintersecting paths from the sources $I$ to the sinks $J$, which we see is positive for $I \le J$.
\end{proof}

We remark that our second proof of \cref{shift_positivity} does not use the fact that $\shift{t} = \exp(tD)$. However, we find that viewing $\shift{t}$ as $\exp(tD)$ clarifies why $\shift{t}$ (and hence the Wronskian) is compatible with total positivity.

\begin{cor}\label{shift_totally_positive}
Let $n\in\mathbb{N}$, and let $s,t > 0$. Then $\rev \circ \shift{s} \circ \rev \circ \shift{t}$ is a totally positive $n\times n$ matrix.
\end{cor}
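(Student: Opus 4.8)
The plan is to exhibit $\rev \circ \shift{s} \circ \rev \circ \shift{t}$ as the product of a totally positive lower-triangular matrix and a totally positive upper-triangular matrix, and then observe that any such product is a totally positive matrix. By \cref{shift_positivity}, the right-hand factor $\shift{t}$ is already a totally positive upper-triangular $n \times n$ matrix for $t > 0$, so the first task is to show that $\rev \circ \shift{s} \circ \rev$ is a totally positive lower-triangular $n \times n$ matrix.

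For that task I would conjugate. Writing $w_0$ for the reversal permutation matrix representing $\rev$, conjugation $M \mapsto w_0 M w_0$ turns upper-triangular matrices into lower-triangular ones, and for $I, J \in \binom{[n]}{k}$ it satisfies $\det((w_0 M w_0)_{I,J}) = \det(M_{I^*,J^*})$, where $I^* := \{n+1-i : i \in I\}$; here reversing the row order and reversing the column order each contribute a sign $(-1)^{\binom{k}{2}}$, and these cancel. Since $I \mapsto I^*$ reverses the Gale order (that is, $I^* \le J^* \iff J \le I$), applying this with $M = \shift{s}$ and invoking \cref{shift_positivity} shows that $\det((\rev \circ \shift{s} \circ \rev)_{I,J})$ is positive when $J \le I$ and vanishes otherwise, i.e.\ $\rev \circ \shift{s} \circ \rev$ is a totally positive lower-triangular matrix.

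Finally I would record the elementary fact that if $L$ is a totally positive lower-triangular $n\times n$ matrix and $U$ a totally positive upper-triangular $n\times n$ matrix, then $LU$ is a totally positive matrix: for $I, J \in \binom{[n]}{k}$ the Cauchy--Binet identity \eqref{cauchy-binet} gives $\det((LU)_{I,J}) = \sum_{K\in\binom{[n]}{k}}\det(L_{I,K})\det(U_{K,J})$, each summand is nonnegative (a nonzero $\det(L_{I,K})$ forces $K \le I$ and a nonzero $\det(U_{K,J})$ forces $K \le J$, in which cases both factors are positive), and the $K = [k]$ summand is strictly positive since $[k] \le I$ and $[k] \le J$ always hold. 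Applying this with $L = \rev \circ \shift{s} \circ \rev$ and $U = \shift{t}$ yields the corollary. The computation is routine apart from one point of care: \cref{shift_positivity} supplies only the upper-triangular statement, so the Gale-order reversal and the sign bookkeeping in the conjugation step must be carried out carefully; alternatively, one could rerun the planar-network argument from the proof of \cref{shift_positivity} with the paths traversed in the opposite direction to obtain the lower-triangular version directly.
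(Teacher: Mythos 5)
Your proof is correct and follows essentially the same route as the paper's: identify $\rev\circ\shift{s}\circ\rev$ as a totally positive lower-triangular matrix (the paper describes this as rotating $\shift{s}$ by $180^\circ$, which is exactly your conjugation by the reversal permutation) and then multiply against the totally positive upper-triangular $\shift{t}$ via Cauchy--Binet. You simply spell out the two details the paper leaves implicit --- the minor identity under conjugation with its cancelling signs and Gale-order reversal, and the positivity of the $K=[k]$ term in the Cauchy--Binet expansion --- and both are carried out correctly.
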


\begin{proof}
By \cref{shift_positivity}, $\rev \circ \shift{s} \circ \rev$ (which is obtained by rotating the matrix $\shift{s}$ by $180^\circ$) is a totally positive lower-triangular matrix, and $\shift{t}$ is a totally positive upper-triangular matrix. Therefore by the Cauchy--Binet identity \eqref{cauchy-binet}, their product is a totally positive matrix.
\end{proof}

\section{Proof of \texorpdfstring{\cref{flag_wronskian}}{Theorem \ref{flag_wronskian}}}\label{sec_wronskians}

\noindent In this section we prove \cref{flag_wronskian}. We begin by collecting some preliminary results.
\begin{lem}\label{positive_coefficients}
Let $V\in\Gr_{k,n}(\mathbb{R})$.
\begin{enumerate}[label=(\roman*), leftmargin=*, itemsep=2pt]
\item\label{positive_coefficients_nonnegative} If $V$ is totally nonnegative, then up to rescaling, all coefficients of $\Wr(V)$ are nonnegative.
\item\label{positive_coefficients_positive} If $V$ is totally positive, then up to rescaling, the coefficient of $x^i$ in $\Wr(V)$ is positive for all $0 \le i \le k(n-k)$.
\end{enumerate}

\end{lem}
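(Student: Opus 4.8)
The plan is to read off both statements directly from the explicit formula for the Wronskian in terms of Plücker coordinates, namely \cref{wronskian_pluckers}:
$$
\Wr(V) = \sum_{I\in\binom{[n]}{k}}\vand{I}\,\Delta_I(V)\,x^{\sumof{I} - \binom{k+1}{2}}.
$$
The key observation is that each constant $\vand{I}$ is a positive integer (\cref{defn_vandermonde_constant}), so the coefficient of a given power of $x$ in $\Wr(V)$ is a nonnegative-integer-weighted sum of Plücker coordinates $\Delta_I(V)$.

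For part \ref{positive_coefficients_nonnegative}: if $V$ is totally nonnegative, choose a representing matrix for which all the maximal minors $\Delta_I(V)$ are nonnegative. Then every coefficient of $\Wr(V)$, being a sum of terms $\vand{I}\Delta_I(V)$ with $\vand{I} > 0$ and $\Delta_I(V) \ge 0$, is nonnegative. (One should note $\Wr(V)$ is not identically zero, since the Plücker coordinates are not all zero and the $\vand{I}$ are positive, so that ``up to rescaling'' is meaningful; but in fact no rescaling is needed once the nonnegative representative is fixed.)

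For part \ref{positive_coefficients_positive}: if $V$ is totally positive, choose a representative with all $\Delta_I(V) > 0$. Fix $i$ with $0 \le i \le k(n-k)$; the coefficient of $x^i$ in $\Wr(V)$ is $\sum_{I}\vand{I}\Delta_I(V)$ over all $I\in\binom{[n]}{k}$ with $\sumof{I} - \binom{k+1}{2} = i$. To conclude this coefficient is strictly positive it suffices to exhibit at least one such $I$. The minimum value of $\sumof{I}$ over $\binom{[n]}{k}$ is $\binom{k+1}{2}$, attained at $I=[k]$, and the maximum is $\binom{n+1}{2}-\binom{n-k+1}{2}$, attained at $I=[n]\setminus[n-k]$, so $i$ ranges over $0,1,\dots,k(n-k)$; moreover every intermediate value is attained, since one can pass from $[k]$ to $[n]\setminus[n-k]$ by repeatedly increasing a single element of the subset by $1$ (each step is legal and raises $\sumof{I}$ by exactly $1$). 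Hence for each $i$ in this range there is an $I$ contributing a positive term $\vand{I}\Delta_I(V)$, and the coefficient of $x^i$ is positive.

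I do not anticipate a real obstacle here; the only thing to be careful about is the bookkeeping that the exponents $\sumof{I}-\binom{k+1}{2}$ sweep out exactly the integers $0$ through $k(n-k)$ without gaps, which is the combinatorial content needed for part \ref{positive_coefficients_positive}. This is the mild point worth spelling out, and it follows from the elementary saturation of the Gale order by single-step moves described above.
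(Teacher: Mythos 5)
Your proposal is correct and is exactly the paper's argument: the paper's proof of this lemma is the one-line observation that it follows from \cref{wronskian_pluckers}, and you have simply filled in the details, including the one genuinely non-trivial point (that every exponent $0 \le i \le k(n-k)$ is attained by some $I\in\binom{[n]}{k}$, so the coefficient in part (ii) is a nonempty sum of positive terms).
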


\begin{proof}
This follows from \cref{wronskian_pluckers}; cf.\ \cite[Theorem 5.5]{schechtman_varchenko20}.
\end{proof}

We reformulate \cref{connected_component_plucker} in terms of Wronskians:
\begin{lem}\label{connected_component_wronskian}
The totally positive flag variety $\Fl_n^{>0}$ is a connected component inside $\Fl_n(\mathbb{R})$ of the subset of $V = (V_1, \dots, V_{n-1})$ such that
\begin{gather*}
\Wr(V_k) \text{ is nonzero at } 0 \text{ and } \infty \quad \text{ for } 1 \le k \le n-1.
\end{gather*}

\end{lem}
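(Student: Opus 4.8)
The plan is to deduce this directly from \cref{connected_component_plucker} by matching up the two described subsets of $\Fl_n(\mathbb{R})$. By \cref{connected_component_plucker}, $\Fl_n^{>0}$ is a connected component of the locus $\mathcal{U} \subseteq \Fl_n(\mathbb{R})$ where $\Delta_{[k]}(V_k) \ne 0$ and $\Delta_{[n]\setminus[n-k]}(V_k) \ne 0$ for all $1 \le k \le n-1$. Let $\mathcal{W}$ denote the locus described in the present lemma, namely where $\Wr(V_k)$ is nonzero at $0$ and at $\infty$ for all $k$. The key step is to show $\mathcal{U} = \mathcal{W}$ as subsets of $\Fl_n(\mathbb{R})$; once that is done, $\Fl_n^{>0}$ is automatically a connected component of $\mathcal{W}$ and the proof is complete.

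First I would record the two coordinate identities coming from \cref{wronskian_pluckers}. Applying that proposition to $V_k \in \Gr_{k,n}(\mathbb{R})$, the value of $\Wr(V_k)$ at $0$ is the coefficient of $x^0$, which is $\vand{I}\Delta_I(V_k)$ for the unique $I \in \binom{[n]}{k}$ with $\sumof{I} = \binom{k+1}{2}$, i.e.\ $I = [k]$; since $\vand{[k]} > 0$, this says $\Wr(V_k)(0) \ne 0 \iff \Delta_{[k]}(V_k) \ne 0$. Similarly, $\Wr(V_k)$ is nonzero at $\infty$ precisely when it attains its top possible degree $k(n-k)$, and the coefficient of $x^{k(n-k)}$ is $\vand{I}\Delta_I(V_k)$ for the unique $I$ with $\sumof{I} - \binom{k+1}{2} = k(n-k)$, i.e.\ $\sumof{I} = \binom{k+1}{2} + k(n-k)$, which forces $I = \{n-k+1, \dots, n\} = [n]\setminus[n-k]$; again $\vand{I} > 0$, so $\Wr(V_k)$ nonzero at $\infty \iff \Delta_{[n]\setminus[n-k]}(V_k) \ne 0$. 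Conjoining over all $k$ gives $\mathcal{W} = \mathcal{U}$.

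The only real content is the bookkeeping in the previous paragraph — identifying which single Plücker coordinate controls the bottom and top coefficients of $\Wr(V_k)$ and checking the $\vand{}$-constants are positive (which is immediate from \cref{defn_vandermonde_constant}) — so I expect no serious obstacle. One point to state carefully is that the identifications are made coordinate-by-coordinate in $k$: the condition defining $\mathcal{U}$ (resp.\ $\mathcal{W}$) involves, for each $k$, only the subspace $V_k$ and its Wronskian, so the equivalences combine without interaction between different values of $k$. With $\mathcal{U} = \mathcal{W}$ established, \cref{connected_component_plucker} yields the claim verbatim.
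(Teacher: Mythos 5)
Your proposal is correct and matches the paper's own proof: both identify, via \cref{wronskian_pluckers}, the value of $\Wr(V_k)$ at $0$ with $\vand{[k]}\Delta_{[k]}(V_k)$ and the nonvanishing at $\infty$ with $\vand{[n]\setminus[n-k]}\Delta_{[n]\setminus[n-k]}(V_k) \neq 0$, and then invoke \cref{connected_component_plucker}. No gaps.
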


\begin{proof}

Observe that by \cref{wronskian_pluckers}, for $W\in\Gr_{k,n}(\mathbb{R})$ we have
$$
\Delta_{[k]}(W) = 0 \hspace*{2pt}\Leftrightarrow\hspace*{2pt} \Wr(W) \text{ is zero at } 0 \quad \text{ and } \quad \Delta_{[n]\setminus [n-k]}(W) = 0 \hspace*{2pt}\Leftrightarrow\hspace*{2pt} \Wr(W) \text{ is zero at } \infty.
$$
Therefore the result is equivalent to \cref{connected_component_plucker}.
\end{proof}

We will also use the following consequence of \cref{shift_positivity}:
\begin{lem}\label{sufficiently_shifted_positive}
Let $V\in\Gr_{k,n}(\mathbb{R})$ be such that $\Wr(V)$ is nonzero at $\infty$. Then $\shift{t}(V)$ is totally positive for all $t > 0$ sufficiently large.
\end{lem}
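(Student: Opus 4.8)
The plan is to exploit the Cauchy--Binet identity together with \cref{shift_positivity}, which tells us that $\shift{t}$ is a totally positive upper-triangular matrix for every $t > 0$. Fix an $n \times k$ matrix $A$ representing $V$. Since $\Wr(V) = \Delta_{[k]}(\shift{x}(V))$ is nonzero at $\infty$, the coefficient of $x^{k(n-k)}$ in $\Wr(V)$ is nonzero; by \cref{wronskian_pluckers} this coefficient is $\vand{[n]\setminus[n-k]}\Delta_{[n]\setminus[n-k]}(V)$, so $\Delta_{[n]\setminus[n-k]}(V) \neq 0$. This is the ``top-justified'' minor, and it is exactly the obstruction that must be avoided in order for some column operation to put $A$ into a form with an invertible bottom $k \times k$ block.

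Next I would compute the Pl\"{u}cker coordinates of $\shift{t}(V)$. By Cauchy--Binet \eqref{cauchy-binet} applied to the product $\shift{t} \cdot A$, for each $I \in \binom{[n]}{k}$ we have
\begin{gather*}
\Delta_I(\shift{t}(V)) = \sum_{J \in \binom{[n]}{k}} \det((\shift{t})_{I,J})\, \Delta_J(V) = \sum_{J \geq I} \det((\shift{t})_{I,J})\, \Delta_J(V),
\end{gather*}
where the sum is restricted to $J \geq I$ in the Gale order by \cref{shift_positivity}\ref{shift_positivity_zero}, and each coefficient $\det((\shift{t})_{I,J})$ is a positive multiple of $t^{\sumof{J}-\sumof{I}}$ by \cref{shift_positivity}\ref{shift_positivity_positive}. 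Thus $\Delta_I(\shift{t}(V))$ is a polynomial in $t$ whose coefficients are positive linear combinations of the $\Delta_J(V)$ with $J \geq I$. The key point is that the highest-degree term in $t$ is governed by the Gale-maximal index $J$ with $\Delta_J(V) \neq 0$: the leading term of $\Delta_I(\shift{t}(V))$ is $c_I\, \Delta_{J^*}(V)\, t^{\sumof{J^*} - \sumof{I}}$ for some $c_I > 0$, where $J^* \in \binom{[n]}{k}$ is the Gale-maximum element among $\{J : \Delta_J(V) \neq 0\}$ that lies above $I$ — and I claim $J^*$ can be taken to be $[n]\setminus[n-k]$, the unique Gale-maximal element of $\binom{[n]}{k}$, precisely because $\Delta_{[n]\setminus[n-k]}(V) \neq 0$ and $[n]\setminus[n-k] \geq I$ for every $I$.

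Putting this together: for every $I \in \binom{[n]}{k}$, the leading term of $\Delta_I(\shift{t}(V))$ as $t \to \infty$ is $c_I\, \Delta_{[n]\setminus[n-k]}(V)\, t^{\sumof{[n]\setminus[n-k]} - \sumof{I}}$ with $c_I > 0$. Rescaling $V$ so that $\Delta_{[n]\setminus[n-k]}(V) > 0$, all these leading coefficients are positive, hence $\Delta_I(\shift{t}(V)) > 0$ for all $I$ once $t$ is large enough (there are only finitely many minors, so a single threshold works). Therefore $\shift{t}(V)$ is totally positive for all sufficiently large $t > 0$. The main obstacle is the bookkeeping in the third step: verifying that the genuinely dominant contribution to $\Delta_I(\shift{t}(V))$ comes from $J = [n]\setminus[n-k]$ and not from some other $J$ with a larger value of $\sumof{J} - \sumof{I}$ — but $[n]\setminus[n-k]$ maximizes $\sumof{J}$ over all of $\binom{[n]}{k}$, so it simultaneously maximizes the degree $\sumof{J}-\sumof{I}$, and it contributes with a strictly positive coefficient $\det((\shift{t})_{I,[n]\setminus[n-k]})$ since $I \leq [n]\setminus[n-k]$; so there is no cancellation and the argument closes.
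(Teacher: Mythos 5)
Your proposal is correct and follows essentially the same route as the paper: deduce $\Delta_{[n]\setminus[n-k]}(V)\neq 0$ from \cref{wronskian_pluckers}, expand $\Delta_I(\shift{t}(V))$ via Cauchy--Binet and \cref{shift_positivity}, and observe that the term $J=[n]\setminus[n-k]$ dominates as $t\to\infty$ with a positive coefficient. The paper's proof is just a more compact version of the same argument.
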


\begin{proof}
Since $\Wr(V)$ is nonzero at $\infty$, \cref{wronskian_pluckers} implies that $\Delta_{[n]\setminus [n-k]}(V) \neq 0$. We fix a scaling of the Pl\"{u}cker coordinates so that $\Delta_{[n]\setminus [n-k]}(V) = 1$. Now let $I\in\binom{[n]}{k}$. By \cref{shift_positivity}\ref{shift_positivity_positive}, there exists $c > 0$ such that $\det((\shift{t})_{I,[n]\setminus [n-k]}) = ct^{\sumof [n]\setminus [n-k] - \sumof I}$. By the Cauchy--Binet identity \eqref{cauchy-binet} and \cref{shift_positivity}, we have
$$
\Delta_I(\shift{t}(V)) = \sum_{J\in\binom{[n]}{k}}\det((\shift{t})_{I,J})\Delta_J(V) = ct^{\sumof [n]\setminus [n-k] - \sumof I} + \text{lower order terms}
$$
as $t \to \infty$. Therefore $\Delta_I(\shift{t}(V)) > 0$ for all $t > 0$ sufficiently large.
\end{proof}

We now proceed to the proof of \cref{flag_wronskian}.
\begin{proof}[Proof of \cref{flag_wronskian}\ref{flag_wronskian_positive}]
Let $\psubset\subseteq\Fl_n(\mathbb{R})$ denote the subset of all complete flags $V = (V_1, \dots, V_{n-1})$ such that $\Wr(V_k)$ is nonzero on $[0,\infty]$ for $1 \le k \le n-1$. We must show that $\psubset = \Fl_n^{>0}$. First observe that $\Fl_n^{>0} \subseteq \psubset$, by \cref{positive_coefficients}\ref{positive_coefficients_positive}. Then by \cref{connected_component_wronskian}, $\Fl_n^{>0}$ is a connected component of $\psubset$. Therefore it suffices to show that given $V = (V_1, \dots, V_{n-1})\in\psubset$, there exists a path in $\psubset$ connecting $V$ to $\Fl_n^{>0}$. We claim that $t \mapsto \shift{t}(V)$ for $t\in [0,t_1]$ is such a path, for $t_1 > 0$ sufficiently large. Indeed, by \cref{wronskian_SL}, we have $\shift{t}(\psubset) \subseteq \psubset$ for all $t \ge 0$, so the path lies in $\psubset$. Also, by \cref{sufficiently_shifted_positive}, $\shift{t}(V)$ is totally positive for all $t > 0$ sufficiently large.
\end{proof}

\begin{proof}[Proof of \cref{flag_wronskian}\ref{flag_wronskian_nonnegative}]
The forward direction follows from \cref{positive_coefficients}\ref{positive_coefficients_nonnegative}. Conversely, let $V\in\Fl_n(\mathbb{R})$ such that $\Wr(V_k)$ is nonzero on $(0,\infty)$, for $1 \le k \le n-1$. We must show that $V$ is totally nonnegative. For $t > 0$, define $\alpha(t) := \scalebox{0.8}{$\begin{bmatrix}1 & -t \\ -t & 1+t^2\end{bmatrix}$}\in\SL_2(\mathbb{R})$. Observe that $\alpha(t)$ sends the interval $(0,\infty)\subseteq\mathbb{P}^1(\mathbb{R})$ to the open interval between $-\frac{t}{1+t^2}$ and $-\frac{1}{t}$, which contains $[0,\infty]$. Hence by \cref{wronskian_SL}, $\Wr(\alpha(t)\cdot V_k)$ is nonzero on $[0, \infty]$ for $1 \le k \le n-1$. By \cref{flag_wronskian}\ref{flag_wronskian_positive}, we get $\alpha(t)\cdot V\in\Fl_n^{>0}$ for all $t > 0$. Taking $t \to 0$ gives $V\in\Fl_n^{\ge 0}$.
\end{proof}

We remark that the forward directions of both parts of \cref{flag_wronskian} hold for any partial flag variety, which follows from \cref{wronskian_pluckers}. However, the reverse directions do not hold for an arbitrary partial flag variety, as the following example shows. \cref{positivity_conjecture} indicates a way to repair the reverse direction in the case of $\Gr_{k,n}(\mathbb{R})$, by imposing a stronger assumption on the Wronskian.
\begin{eg}\label{eg_partial_flag}
Let $V = (V_1, V_2)$ be the partial flag in $\mathbb{R}^4$ represented by the matrix
$$
A := \begin{bmatrix}
1 & 0 \\
1 & 2 \\
1 & 1 \\
1 & 3
\end{bmatrix}.
$$
That is, $V_k$ (for $k=1,2$) is the $k$-dimensional subspace spanned by the first $k$ columns of $A$. Then
$$
\Wr(V_1) = 1 + x + x^2 + x^3 \quad \text{ and } \quad \Wr(V_2) = 1 + x + 4x^2 + x^3 + x^4. 
$$
We see that $\Wr(V_1)$ and $\Wr(V_2)$ are nonzero on $[0,\infty]$; in fact, their coefficients are all positive. However, $V$ is not totally nonnegative, since $\Delta_{12}(V)$ and $\Delta_{23}(V)$ have opposite signs.
\end{eg}

\section{Chebyshev systems and disconjugacy}\label{sec_disconjugate}

\subsection{Background}\label{sec_background_disconjugate}
We recall some results about Chebyshev systems and disconjugate differential equations. We refer to \cite{polya22, karlin_studden66, coppel71, krein_nudel'man73, zalik96} for further details.
\begin{defn}\label{defn_detstar}
Let $x_1, \dots, x_k\in\mathbb{R}$. For $1 \le i \le k$, let $p_i$ be the number of elements among $x_1, \dots, x_{i-1}$ which are equal to $x_i$. For real-valued functions $f_1, \dots, f_k$ which are defined and sufficiently differentiable at $x_1, \dots, x_k$, we let
\begin{align}\label{detstar_equation}
\detstar(f_1, \dots, f_k; x_1, \dots, x_k) := \det(f_j^{(p_i)}(x_i))_{1 \le i,j \le k},
\end{align}
where $f_j^{(p_i)}$ denotes the $p_i$th derivative of $f_j$. In particular, if $x_1, \dots, x_k$ are distinct, then \eqref{detstar_equation} equals $\det(f_j(x_i))_{1 \le i,j \le k}$, while if $x_1 = \cdots = x_k = x$, then $\eqref{detstar_equation}$ equals the Wronskian of $f_1, \dots, f_k$ at $x$.
\end{defn}

We can equivalently express $\detstar$ as a limit, as follows:
\begin{lem}[{\cite[(2.8)]{karlin_studden66}; cf.\ \cite[Problems V.95 and V.96]{polya_szego25}}]\label{detstar_limit}
Let $x_1, \dots, x_k\in\mathbb{R}$. For $1 \le i \le k$, let $p_i$ be the number of elements among $x_1, \dots, x_{i-1}$ which are equal to $x_i$. Then
$$
\detstar(f_1, \dots, f_k; x_1, \dots, x_k) = \Big(\prod_{i=1}^kp_i!\Big)\lim\Big(\prod_{\substack{1 \le i < j \le k, \\ x_i = x_j}}\frac{1}{y_j - y_i}\Big)\det(f_j(y_i))_{1 \le i,j \le k},
$$
where the limit above is taken over all distinct $y_1, \dots, y_k\in\mathbb{R}$ as $y_i \to x_i$ for $1 \le i \le k$.
\end{lem}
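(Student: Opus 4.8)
The plan is to convert the Vandermonde-type determinant $\det(f_j(y_i))_{1\le i,j\le k}$ into a determinant of \emph{divided differences} by a triangular change of basis on the rows, and then pass to the limit using the classical fact that divided differences of a sufficiently smooth function converge to its derivatives.

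First I would set up the combinatorics of the nodes. Let $t_1, \dots, t_m$ be the distinct values occurring among $x_1, \dots, x_k$, and for each $s$ let $i_{s,0} < i_{s,1} < \cdots < i_{s,q_s-1}$ enumerate the indices $i$ with $x_i = t_s$. By the definition of $p_i$ we then have $p_{i_{s,l}} = l$; writing $y^{(s)}_l := y_{i_{s,l}}$, the nodes $y^{(s)}_0, \dots, y^{(s)}_{q_s-1}$ are exactly the $y_i$ approaching $t_s$. Thus the block decomposition of $[k]$ matches the grouping of the $p_i$, and every pair $i<j$ with $x_i = x_j$ arises uniquely as $(i_{s,b}, i_{s,l})$ with $0\le b<l\le q_s-1$, with $y_j - y_i = y^{(s)}_l - y^{(s)}_b$.

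Second, I would invoke Newton's explicit formula for divided differences,
\[
f[z_0, \dots, z_l] \;=\; \sum_{a=0}^{l} \frac{f(z_a)}{\prod_{\substack{0 \le b \le l \\ b \ne a}}(z_a - z_b)},
\]
applied with the nodes $y^{(s)}_0, \dots, y^{(s)}_l$ and each $f = f_j$. This expresses the row vector $\bigl(f_j[y^{(s)}_0, \dots, y^{(s)}_l]\bigr)_{j=1}^{k}$ as an explicit linear combination of the rows $\bigl(f_j(y^{(s)}_0)\bigr)_j, \dots, \bigl(f_j(y^{(s)}_l)\bigr)_j$ of the original matrix. Hence the matrix $W$ whose row indexed by $i = i_{s,l}$ is $\bigl(f_j[y^{(s)}_0, \dots, y^{(s)}_{l}]\bigr)_{j=1}^{k}$ is obtained from $(f_j(y_i))_{i,j}$ by left multiplication by a matrix which, after grouping indices into blocks, is block-diagonal with each block lower triangular, the diagonal entry in block $s$, position $l$, being $\prod_{b=0}^{l-1}(y^{(s)}_l - y^{(s)}_b)^{-1}$. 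Taking determinants,
\[
\det W \;=\; \Bigl(\prod_{\substack{1\le i<j\le k \\ x_i = x_j}}\frac{1}{y_j - y_i}\Bigr)\det\bigl(f_j(y_i)\bigr)_{1\le i,j\le k},
\]
since the product of the diagonal entries over all blocks contributes exactly one factor $(y_j - y_i)^{-1}$ for each such pair.

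Finally, I would let every $y_i \to x_i$. The classical limiting identity $f[z_0, \dots, z_l] \to f^{(l)}(t)/l!$ as $z_0, \dots, z_l \to t$ — valid when $f$ is $C^l$ near $t$, which is exactly what ``sufficiently differentiable'' provides — shows that $W$ converges entrywise to the matrix with $(i,j)$ entry $f_j^{(p_i)}(x_i)/p_i!$, whose determinant is $\bigl(\prod_{i=1}^{k} p_i!^{-1}\bigr)\detstar(f_1, \dots, f_k; x_1, \dots, x_k)$. Combining this with the determinant identity above and multiplying through by $\prod_{i} p_i!$ yields the asserted formula. The only genuinely analytic ingredient is the divided-difference limit (obtainable from the Hermite--Genocchi integral representation, or by iterating the mean value theorem); everything else is linear algebra, and the one point I expect to require care is checking that the product of the triangular matrix's diagonal entries is precisely $\prod_{i<j,\,x_i=x_j}(y_j - y_i)^{-1}$, with no stray factors. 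As an alternative to the block formalism one could induct on $k$, splitting off a single highest-order divided difference at each stage, but I find the change-of-basis argument more transparent.
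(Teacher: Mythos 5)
Your argument is correct. Note that the paper does not prove this lemma at all --- it is quoted from Karlin--Studden (2.8) with a cross-reference to P\'olya--Szeg\H{o} --- so there is no internal proof to compare against; what you have written is essentially the standard argument behind that citation. The bookkeeping checks out: grouping the indices by distinct values $t_s$, the row operation sending row $i_{s,l}$ to the divided difference $f_j[y^{(s)}_0,\dots,y^{(s)}_l]$ is implemented by a lower-triangular matrix (in the original index order, since the enumeration within each block is increasing), its diagonal entry at $i_{s,l}$ is $\prod_{b<l}(y^{(s)}_l-y^{(s)}_b)^{-1}$, and the bijection between pairs $b<l$ within a block and pairs $i<j$ with $x_i=x_j$ gives exactly the prefactor $\prod(y_j-y_i)^{-1}$ with no stray terms. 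Entrywise convergence $f_j[y^{(s)}_0,\dots,y^{(s)}_l]\to f_j^{(l)}(t_s)/l!$ plus continuity of $\det$ then yields the limit matrix $\bigl(f_j^{(p_i)}(x_i)/p_i!\bigr)$, and extracting $\prod_i p_i!^{-1}$ by row multilinearity finishes it. The one hypothesis you correctly flag --- that the divided-difference limit needs $f_j\in C^{p_i}$ in a neighborhood of $x_i$, not just differentiability at the point --- is harmless here, since in the paper's only application the $f_j$ are polynomials.
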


We also have the following characterization of when $\detstar$ is zero:
\begin{lem}\label{detstar_lemma}
Let $x_1, \dots, x_k\in\mathbb{R}$, and let $f_1, \dots, f_k$ be real-valued functions which are defined and sufficiently differentiable at $x_1, \dots, x_k$. Then $\detstar(f_1, \dots, f_k; x_1, \dots, x_k) = 0$ if and only if there exist $c_1, \dots, c_k\in\mathbb{R}$ not all zero such that $c_1f_1 + \cdots + c_kf_k$ has zeros at $x_1, \dots, x_k$ (counted with multiplicity).
\end{lem}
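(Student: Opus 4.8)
The plan is to read off $\detstar(f_1, \dots, f_k; x_1, \dots, x_k)$ as the determinant of the coefficient matrix of the linear system that encodes the prescribed vanishing conditions, so that the lemma reduces to the elementary fact that a square matrix is singular precisely when it has a nontrivial kernel.

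First I would set $M := (f_j^{(p_i)}(x_i))_{1 \le i,j \le k}$, so that $\detstar(f_1, \dots, f_k; x_1, \dots, x_k) = \det M$ by \cref{defn_detstar}. Then $\det M = 0$ if and only if there is a nonzero column vector $c = (c_1, \dots, c_k)^{\mathsf{T}} \in \mathbb{R}^k$ with $Mc = 0$. Writing $g := c_1 f_1 + \cdots + c_k f_k$ and using linearity of differentiation, the $i$th entry of $Mc$ equals $\sum_{j=1}^k c_j f_j^{(p_i)}(x_i) = g^{(p_i)}(x_i)$; note that $g$ is differentiable to the required orders at the $x_i$ since each $f_j$ is. Hence $Mc = 0$ is equivalent to the system $g^{(p_i)}(x_i) = 0$ for $1 \le i \le k$.

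It remains to identify this system with the condition that $g$ has zeros at $x_1, \dots, x_k$ counted with multiplicity. This is a matter of bookkeeping with the indices $p_i$: if a value $x$ occurs at the positions $i_1 < \cdots < i_p$ among $x_1, \dots, x_k$, then by the definition of $p_i$ we have $\{p_{i_1}, \dots, p_{i_p}\} = \{0, 1, \dots, p-1\}$, so the equations indexed by $i_1, \dots, i_p$ are exactly $g(x) = g'(x) = \cdots = g^{(p-1)}(x) = 0$, i.e.\ $g$ vanishes to order at least $p$ at $x$. Ranging over the distinct values appearing in the multiset $\{x_1, \dots, x_k\}$ yields the claim, and combining the three steps proves the lemma.

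I do not expect a genuine obstacle: the argument is the singular-matrix/nonzero-kernel dichotomy together with the reindexing above. The only point needing care is the convention that ``$g$ has zeros at $x_1, \dots, x_k$ counted with multiplicity'' means $g$ vanishes to order at least the multiplicity at each repeated point, which agrees with the derivative characterization of order of vanishing precisely because the required derivatives of each $f_j$ — hence of $g$ — exist at the $x_i$ by hypothesis.
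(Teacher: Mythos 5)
Your proposal is correct and is essentially the paper's argument: the paper's proof is a one-line statement that $c$ lies in the kernel of the matrix $(f_j^{(p_i)}(x_i))$ if and only if $c_1f_1 + \cdots + c_kf_k$ has zeros at $x_1, \dots, x_k$, which is exactly the singular-matrix/nontrivial-kernel dichotomy you spell out. Your additional bookkeeping with the indices $p_i$ (that the positions where a repeated value occurs carry derivative orders $0, 1, \dots, p-1$) is just an explicit verification of what the paper leaves implicit.
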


\begin{proof}
This follows from the fact that a vector $c\in\mathbb{R}^k$ lies in the kernel of the matrix on the right-hand side of \eqref{detstar_equation} if and only if $c_1f_1 + \cdots + c_kf_k$ has zeros at $x_1, \dots, x_k$.
\end{proof}

\begin{defn}\label{defn_disconjugate}
Given an interval $I\subseteq\mathbb{R}$ (which may be open, closed, or half-open), let $C^k(I)$ denote the vector space over $\mathbb{R}$ of $k$-times continuously differentiable functions from $I$ to $\mathbb{R}$. Let $f_1, \dots, f_k\in C^k(I)$ be linearly independent, and let $V\subseteq C^k(I)$ denote the span of $f_1, \dots, f_k$.
\begin{itemize}[leftmargin=18pt, itemsep=2pt]
\item We say that $V$ is a {\itshape Chebyshev space (on $I$)} if every nonzero $f\in V$ has at most $k-1$ zeros in $I$, not counted with multiplicity. Equivalently, by \cref{detstar_lemma},
$$
\det(f_j(x_i))_{1 \le i,j \le k} \neq 0 \quad \text{ for all } x_1 < \cdots < x_k \text{ in } I.
$$
\item We say that $V$ is {\itshape disconjugate (on $I$)} if every nonzero $f\in V$ has at most $k-1$ zeros in $I$, counted with multiplicity. Equivalently, by \cref{detstar_lemma},
$$
\detstar(f_1, \dots, f_k; x_1, \dots, x_k) \neq 0 \quad \text{ for all } x_1 \le \cdots \le x_k \text{ in } I.
$$

\item We say that $(f_1, \dots, f_k)$ is a {\itshape Markov system (on $I$)}, or a {\itshape Markov basis of $V$ (on $I$)}, if
$$
\Wr(f_1, \dots, f_i) \text{ is nonzero on } I \quad \text{ for } 1 \le i \le k.
$$
We say that $V$ is a {\itshape Markov space} if it has a Markov basis.
\end{itemize}

\end{defn}

\begin{eg}
Let $V$ be the $2$-dimensional space spanned by $1$ and $x^3$, and let $I := (-1,1)$. Then $V$ is a Chebyshev space on $I$, but it is not disconjugate or a Markov space on $I$.
\end{eg}

\begin{rmk}\label{rmk_markov}
The three kinds of spaces introduced in \cref{defn_disconjugate} are closely related. By definition, a disconjugate space is a Chebyshev space. Also, a Markov space is disconjugate (see \cref{markov_disconjugate_chebyshev}); this follows from a clever application of the mean value theorem. Conversely, a Chebyshev space with a nonvanishing Wronskian on an open interval is disconjugate (see \cref{chebyshev_disconjugate_markov_chebyshev}), and a disconjugate space on an open or compact interval is a Markov space (see \cref{chebyshev_disconjugate_markov_disconjugate}).

We point out that the terms Chebyshev space, disconjugate space, and Markov space do not have standardized meanings in the literature. Some authors use different names, or define the properties for a system of functions $(f_1, \dots, f_k)$, rather than for its linear span, or require that the determinants appearing in each case are positive, rather than nonzero. (By continuity and connectedness, using \cref{detstar_limit}, if the determinants are nonzero then they take a constant sign, so the additional positivity requirement can be satisfied by negating some of the $f_i$'s.) \cref{comparison_table} summarizes the terminology used in \cite{karlin_studden66, coppel71, krein_nudel'man73, zielke79}. Each cell of the table records the name used in the corresponding reference (above the dashed line), along with the modification to the definition as compared to \cref{defn_disconjugate} (below the dashed line). Also, $C$, $E$, $M$, and $T$ stand for {\itshape complete}, {\itshape extended}, {\itshape Markov}, and {\itshape Chebyshev}, respectively.
\end{rmk}
\vspace*{0pt}\begin{table}[ht]
\begin{center}
\footnotesize\setlength{\tabcolsep}{6pt}
\begin{tabular}{| >{\vspace*{4pt}\centering\arraybackslash}m{1.32in}<{\vspace*{2pt}} | >{\vspace*{4pt}\centering\arraybackslash}m{1.32in}<{\vspace*{2pt}} | >{\vspace*{4pt}\centering\arraybackslash}m{1.32in}<{\vspace*{2pt}} | >{\vspace*{4pt}\centering\arraybackslash}m{1.32in}<{\vspace*{2pt}} |}
\cline{2-4}
\multicolumn{1}{l|}{} & Chebyshev space & disconjugate space & Markov system \\ \hline
\multirow{2}{1.32in}{\rule[10pt]{0pt}{0pt}Karlin and Studden \cite{karlin_studden66}} & $T$-system & $ET$-system & $ECT$-system \\ \cdashline{2-4}
 & $\det > 0$ & $\detstar > 0$ & $\Wr > 0$ \\ \hline
\multirow{2}{1.32in}{\vspace*{-5pt}Coppel \cite{coppel71}} & \multirow{2}{*}{\vspace*{-5pt}N/A} & disconjugate space, \v{C}eby\v{s}ev system & Markov system \\ \cdashline{3-4}
 & & no modification & $\Wr > 0$ \\ \hline
\multirow{2}{1.32in}{\rule[10pt]{0pt}{0pt}Krein and Nudel'man \cite{krein_nudel'man73}} & $T$-system & $ET$-system & $EM$-system\footnotemark \\ \cdashline{2-4}
 & no modification & no modification & no modification \\ \hline
\multirow{2}{1.32in}{\vspace*{-5pt}Zielke \cite{zielke79}} & \parbox{1.32in}{\centering Haar space, \\ \v{C}eby\v{s}ev system} & disconjugate space, $T^*$-system & $M^*$-system \\ \cdashline{2-4}
 & no modification & no modification & no modification \\ \hline
\end{tabular}\vspace*{8pt}
\caption{Comparison of terminology in \cref{defn_disconjugate} with existing literature.}
\label{comparison_table}
\end{center}
\end{table}\vspace*{-18pt}

\begin{rmk}\label{disconjugacy_equivalent}
We note that what Eremenko \cite{eremenko15} calls a disconjugate space is what we call a Chebyshev space. This is the notion that appears in \cref{disconjugacy_conjecture}, i.e., zeros are not counted with multiplicity. However, by \cref{chebyshev_disconjugate_markov_chebyshev}, it is equivalent in \cref{disconjugacy_conjecture} to count zeros with multiplicity.
\end{rmk}

\begin{rmk}
While differential equations do not play a role in our proofs, we mention that the motivation of P\'{o}lya \cite{polya22}, Coppel \cite{coppel71}, and many other authors is to study the linear differential operator $\mathcal{L}$ from \eqref{differential_operator} with a given kernel $V\subseteq C^k(I)$. It turns out that the property of $V$ being disconjugate or having a Markov basis is closely related to properties of $\mathcal{L}$. For example, if $V$ has a Markov basis $(f_1, \dots, f_k)$, then $\mathcal{L}$ admits the factorization
$$
\mathcal{L} = \bigg(\frac{h_k}{h_{k-1}}\frac{d}{dx}\frac{h_{k-1}}{h_k}\bigg)\bigg(\frac{h_{k-1}}{h_{k-2}}\frac{d}{dx}\frac{h_{k-2}}{h_{k-1}}\bigg) \cdots \bigg(h_1\frac{d}{dx}\frac{1}{h_1}\bigg),
$$
where $h_i := \Wr(f_1, \dots, f_i)$ for $1 \le i \le k$. We refer to \cite{coppel71} for further details. This connection also explains why, in \cref{defn_disconjugate}, we assume that $f_1, \dots, f_k$ lie in $C^k(I)$, rather than merely $C^{k-1}(I)$.
\end{rmk}

We collect results relating the three kinds of spaces in \cref{defn_disconjugate}. In addition to the references cited, for \cref{markov_disconjugate_chebyshev}, also see \cite[Theorem II]{polya22}, \cite[Theorem XI.1.1]{karlin_studden66}, \cite[Proposition 3.5]{coppel71}, and \cite[Theorem II.5.1]{krein_nudel'man73}; for \cref{chebyshev_disconjugate_markov_chebyshev}, see \cite[Proposition 3.3]{coppel71}; and for \cref{chebyshev_disconjugate_markov_disconjugate}, see \cite[Theorem IV]{polya22} and \cite[Theorem 3.3]{coppel71}.
\footnotetext{The term {\itshape $EM$-system} does not appear in \cite{krein_nudel'man73}, but is consistent with the naming conventions therein.}
\begin{thm}[{Markov \cite[Section 1]{markoff04}}]\label{markov_disconjugate_chebyshev}
Let $I\subseteq\mathbb{R}$ be an interval, and let $V$ be a $k$-dimensional Markov subspace of $C^k(I)$. Then $V$ is disconjugate on $I$.
\end{thm}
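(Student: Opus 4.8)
Let $I\subseteq\mathbb{R}$ be an interval, and let $V$ be a $k$-dimensional Markov subspace of $C^k(I)$. Then $V$ is disconjugate on $I$.

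The plan is to prove this by induction on $k$, using a Markov basis $(f_1,\dots,f_k)$ of $V$ and the classical trick of dividing by the leading basis vector $f_1$. For $k=1$ the statement is immediate: a Markov space of dimension $1$ is spanned by a function $f_1=\Wr(f_1)$ that is nonzero on $I$, so the only zero count to check is vacuous. For the inductive step, suppose $k\ge 2$ and that the result holds for dimension $k-1$ on every interval.

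First I would set $g_j:=(f_j/f_1)'$ for $2\le j\le k$; this makes sense since $f_1=\Wr(f_1)$ is nonvanishing on $I$, and $g_j\in C^{k-1}(I)$. Let $W:=\operatorname{span}(g_2,\dots,g_k)$. The key computational identity is that Wronskians behave well under this operation: $\Wr(g_2,\dots,g_i)=\Wr(f_1,\dots,f_i)/f_1^{\,i}$ for each $2\le i\le k$ (this is the standard reduction-of-order formula, provable by row operations on the Wronskian matrix after factoring out $f_1$). Since $(f_1,\dots,f_k)$ is a Markov system, each $\Wr(f_1,\dots,f_i)$ is nonzero on $I$, and $f_1$ is nonzero on $I$, so each $\Wr(g_2,\dots,g_i)$ is nonzero on $I$. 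Hence $(g_2,\dots,g_k)$ is a Markov system of dimension $k-1$, and by the inductive hypothesis $W$ is disconjugate on $I$: every nonzero element of $W$ has at most $k-2$ zeros in $I$ counted with multiplicity.

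Next I would transfer this back to $V$. Take a nonzero $f=c_1f_1+\sum_{j\ge 2}c_jf_j\in V$ and write $f/f_1=c_1+\phi$ where $\phi'=\sum_{j\ge 2}c_jg_j\in W$. If $\phi'\equiv 0$ then $f=c_1f_1$ with $c_1\neq 0$, which has no zeros in $I$; so assume $\phi'\not\equiv 0$, whence $\phi'$ has at most $k-2$ zeros in $I$ counted with multiplicity. Now I count zeros of $f$: since $f_1$ is nonvanishing, the zeros of $f$ in $I$ (with multiplicity) coincide with the zeros of $f/f_1=c_1+\phi$. The obstacle — and the step requiring care — is the multiplicity bookkeeping: between two consecutive zeros of $c_1+\phi$, Rolle's theorem produces a zero of $\phi'$, and a zero of $c_1+\phi$ of multiplicity $m\ge 2$ forces a zero of $\phi'$ of multiplicity $m-1$ at the same point. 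A standard counting argument (the discrete version of Rolle for functions with multiple zeros) then shows that if $c_1+\phi$ had $k$ zeros in $I$ counted with multiplicity, $\phi'$ would have at least $k-1$ zeros in $I$ counted with multiplicity, contradicting the bound $k-2$. Therefore $f$ has at most $k-1$ zeros in $I$ counted with multiplicity, which is exactly disconjugacy of $V$, completing the induction.

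The only genuinely delicate point is making the Rolle-type count rigorous across a general (possibly non-compact, possibly half-open) interval and in the presence of repeated zeros; this is handled by the elementary lemma that for $h\in C^1(I)$ with $N$ zeros counted with multiplicity, $h'$ has at least $N-1$ zeros counted with multiplicity, applied to $h=c_1+\phi$. Everything else is the reduction-of-order Wronskian identity and bookkeeping, so I expect the proof to be short once that lemma is in hand.
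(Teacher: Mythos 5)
Your proof is correct and is precisely the classical argument (reduction of order via $g_j=(f_j/f_1)'$ plus Rolle's theorem with multiplicities) that the paper alludes to in \cref{rmk_markov} as ``a clever application of the mean value theorem''; the paper itself does not reprove the theorem but cites Markov, P\'olya, Karlin--Studden, and Coppel, all of whom use this induction. The two points you flag as delicate --- the identity $\Wr(g_2,\dots,g_i)=\Wr(f_1,\dots,f_i)/f_1^{\,i}$ and the multiplicity-counting version of Rolle --- are both standard and go through on an arbitrary interval exactly as you describe, so no gap remains.
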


\begin{thm}[{Hartman \cite[Section 1]{hartman58}}]\label{chebyshev_disconjugate_markov_chebyshev}
Let $I\subseteq\mathbb{R}$ be an open interval, and let $V$ be a $k$-dimensional Chebyshev subspace of $C^k(I)$. If $\Wr(V)$ is nonzero on $I$, then $V$ is disconjugate on $I$.
\end{thm}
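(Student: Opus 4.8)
The plan is to argue by contradiction, working directly with the zero‑counting description of disconjugacy in \cref{defn_disconjugate}. Suppose some nonzero $f\in V$ has at least $k$ zeros in $I$ counted with multiplicity. Two preliminary remarks shape the argument. First, no nonzero element of $V$ can vanish to order $\ge k$ at a single point: if $f=\sum_j c_jf_j$ vanished to order $\ge k$ at $p$, then the coefficient vector $c$ would lie in the kernel of the Wronskian matrix $(f_j^{(i)}(p))_{0\le i\le k-1}$, forcing $\Wr(V)(p)=0$, contrary to hypothesis. So the $\ge k$ zeros of $f$ are distributed over $m\ge 2$ distinct points $s_1<\dots<s_m$ in $I$, with multiplicities $\mu_1,\dots,\mu_m$, each at most $k-1$, whose sum is at least $k$. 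Second, it suffices to produce from $f$ a new $\tilde f\in V$ that still has at least $k$ zeros counted with multiplicity but \emph{strictly more} distinct zeros than $f$: since each such step leaves the multiplicity count fixed and raises the number of distinct zeros, finitely many steps yield an element of $V$ with $k$ distinct zeros in $I$, contradicting the Chebyshev hypothesis.

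The heart of the argument is the following zero‑splitting move. Choose an index $i_0$ with $\mu_{i_0}\ge 2$ and write $s:=s_{i_0}$, $\mu:=\mu_{i_0}$. I want $w\in V$ that (i) vanishes to order $\ge\mu_i$ at each $s_i$ with $i\ne i_0$, and (ii) has a zero of order exactly $\mu-1$ at $s$, with arbitrarily small leading Taylor coefficient $\varepsilon$. Granting such a $w$, set $\tilde f:=f+w$. Then $\tilde f$ still vanishes to order $\ge\mu_i$ at the points $s_i$ with $i\ne i_0$; at $s$ it has a zero of order exactly $\mu-1$; and since $w$ is small, $\tilde f^{(\mu)}(s)\approx f^{(\mu)}(s)\neq 0$, so near $s$ the element $\tilde f$ is governed by
\[
\tfrac{\varepsilon}{(\mu-1)!}(x-s)^{\mu-1}+\tfrac{f^{(\mu)}(s)}{\mu!}(x-s)^{\mu}+\cdots,
\]
which, on the window $|x-s|\sim\varepsilon$ where these two terms dominate the remainder, has one extra simple zero close to $s$. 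Hence near $s$ the element $\tilde f$ has $\mu$ zeros with multiplicity but two distinct zeros, while its zeros elsewhere are unchanged, so $\tilde f$ has $m+1$ distinct zeros and at least $k$ zeros with multiplicity, as required. Existence of $w$ is a question of linear algebra on the $k$‑dimensional space $V$: one prescribes $\sum_{i\ne i_0}\mu_i$ low‑order jets (to be zero) at the $s_i$ with $i\ne i_0$, together with $\mu$ jets at $s$ (the lower ones zero, the top one $\varepsilon$). The hypothesis $\Wr(V)\neq 0$ is exactly what makes the evaluation map $V\to\mathbb{R}^{k}$ sending an element to its jet of order $k-1$ at any point a linear isomorphism, which is what lets one solve for such $w$; the delicate point is that the joint evaluation map at the full configuration $(s_1^{\mu_1},\dots,s_m^{\mu_m})$ is singular — indeed it kills $f$, by \cref{detstar_lemma} — so one must verify that the required small perturbation direction lies in its image, which comes down to bounding the dimension of its kernel (a kernel that is too large would, on imposing further vanishing conditions, again produce an element of $V$ with a zero of order $\ge k$).

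I expect the main obstacle to be making this perturbation rigorous: simultaneously keeping the perturbed element in $V$, creating the new zero near $s$, and not disturbing the far‑away zeros, with careful bookkeeping of the jet‑evaluation maps and their controlled defect. This is the only place where the \emph{global} nonvanishing of $\Wr(V)$, rather than merely the Chebyshev property, is used. It is worth flagging one reason not to take the tempting shortcut of reduction of order (divide $V$ by a nonvanishing element, differentiate, and induct on $k$): the resulting $(k-1)$‑dimensional space need not be Chebyshev. For instance $V=\langle 1,\cos x,\sin x\rangle$ is a $3$‑dimensional Chebyshev space on $(-\pi,\pi)$ with $\Wr(V)\equiv 1$, which one checks directly is disconjugate there, yet $\langle\sin x,\cos x\rangle$ is not Chebyshev on $(-\pi,\pi)$, and in fact $V$ admits no Markov basis on that interval. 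A reduction‑of‑order induction can still be arranged, but the derived space must be handled with more care; the perturbation argument above seems the most direct route, as it invokes only the stated hypotheses and \cref{detstar_lemma}.
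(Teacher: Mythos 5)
The paper does not prove this statement---it is quoted from Hartman \cite[Section 1]{hartman58} (see also \cite[Proposition 3.3]{coppel71})---so I am assessing your argument on its own. Your overall strategy, splitting a multiple zero into distinct zeros while conserving the total multiplicity count and iterating until the Chebyshev hypothesis is violated, is indeed the classical route. But the construction of the perturbation $w$ has a genuine gap, and it fails precisely in the main case. You require $w$ to lie in the subspace $W\subseteq V$ cut out by the $k-1$ homogeneous jet conditions (vanishing to order $\ge\mu_i$ at $s_i$ for $i\ne i_0$, and to order $\ge\mu-1$ at $s$), with $w^{(\mu-1)}(s)\neq 0$. Since $\dim V=k$, we get $\dim W\ge 1$, and $f\in W$ with $f^{(\mu-1)}(s)=0$. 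When those $k-1$ functionals are linearly independent on $V$---the generic situation, which nothing in the hypotheses excludes---we have $\dim W=1$, hence $W=\mathbb{R}f$, and every element of $W$ has vanishing $(\mu-1)$-st derivative at $s$: no admissible $w$ exists. Your proposed remedy, bounding the dimension of the kernel of the joint jet-evaluation map, points the wrong way: a large kernel is the \emph{easy} case (one can impose further vanishing conditions and still retain a nonzero element), whereas the obstruction arises exactly when the kernel is as small as possible. Concretely, for $k=3$ with $f$ having a double zero at $s$ and a simple zero at $s'$, the space $W=\{g\in V: g(s')=g(s)=0\}$ is typically the line $\mathbb{R}f$, on which $g'(s)\equiv 0$. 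Fixing this requires a genuinely different perturbation---for instance, using that $\Wr(V)\neq 0$ makes $V$ the kernel of the operator $\mathcal{L}$ of \eqref{differential_operator}, so that elements of $V$ are determined by, and depend continuously on, their $(k-1)$-jet at a single point, and then tracking how the $k$ zeros move as that jet is perturbed; or running the sign analysis on $\detstar$ along a one-parameter family of configurations. This is where the real work of Hartman's and Coppel's proofs lies, and your write-up does not contain it.

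A secondary point: your side remark that $V=\langle 1,\cos x,\sin x\rangle$ ``admits no Markov basis'' on $(-\pi,\pi)$ is false. For example, $\Wr(1+\cos x,\,\sin x)=(1+\cos x)\cos x+\sin^2x=1+\cos x>0$ on $(-\pi,\pi)$, so $(1+\cos x,\,\sin x,\,\cos x)$ is a Markov basis there; and indeed the claim must fail, since you correctly note that $V$ is disconjugate on this open interval, whence \cref{chebyshev_disconjugate_markov_disconjugate} guarantees a Markov basis. This does not affect your main line of argument (your point that the derived space from a naive reduction of order need not be Chebyshev is correct), but it contradicts a theorem you would be relying on.
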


\begin{thm}[{Zielke \cite[Theorem 23.3]{zielke79}; Hartman \cite[Proposition 3.1]{hartman69}}]\label{chebyshev_disconjugate_markov_disconjugate}
Let $I\subseteq\mathbb{R}$ be an open or compact interval, and let $V$ be a $k$-dimensional disconjugate subspace of $C^k(I)$. Then $V$ is a Markov space on $I$.
\end{thm}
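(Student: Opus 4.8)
The plan is to induct on $k = \dim V$. The case $k=1$ is immediate: a disconjugate line $\langle f_1\rangle$ has no zero in $I$, so $(f_1)$ is a Markov basis. For $k \geq 2$ the idea is to peel off a nonvanishing function and differentiate. First I would produce some $f_1 \in V$ with no zeros on $I$; then I would pass to the \emph{derived space} $W := \{(f/f_1)' : f \in V\}$, which has dimension $k-1$ because the map $f \mapsto (f/f_1)'$ has kernel exactly $\langle f_1\rangle$ (note $1 = f_1/f_1 \in V/f_1$); I would check that $W$ is again disconjugate on $I$; by the inductive hypothesis $W$ has a Markov basis $(g_2, \dots, g_k)$; and finally I would lift it, choosing $f_i \in V$ with $(f_i/f_1)' = g_i$ for $2 \le i \le k$. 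The elementary reduction identity $\Wr(f_1, \dots, f_i) = f_1^{\,i}\,\Wr\bigl((f_2/f_1)', \dots, (f_i/f_1)'\bigr) = f_1^{\,i}\,\Wr(g_2, \dots, g_i)$ then shows $\Wr(f_1, \dots, f_i)$ is nonvanishing on $I$ for all $1 \le i \le k$, so $(f_1, \dots, f_k)$ is a Markov basis of $V$.

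Two points require work. \textbf{Existence of a nonvanishing $f_1$.} For any $c \in I$, the $k-1$ conditions $f(c) = f'(c) = \cdots = f^{(k-2)}(c) = 0$ cut out a nonzero $f_c \in V$ vanishing to order $\geq k-1$ at $c$; by disconjugacy $f_c$ has a zero of order exactly $k-1$ at $c$ and no other zero in $I$. For compact $I = [a,b]$ one takes $c = a$ and $c = b$, normalizes $f_a, f_b$ to be positive on $(a,b]$ and $[a,b)$ respectively, and sets $f_1 := f_a + f_b > 0$ on all of $[a,b]$. For open $I$ an additional limiting/exhaustion argument is needed (letting $c$ approach an endpoint, or restricting to an increasing family of compact subintervals), which is cleanest to quote from the cited literature. \textbf{Disconjugacy of the derived space.} Dividing out $f_1$, this reduces to: if $U \subseteq C^k(I)$ is disconjugate of dimension $k$ with $1 \in U$, then $DU = \{H' : H \in U\}$ is disconjugate of dimension $k-1$, i.e. every non-constant $H \in U$ has $H'$ with at most $k-2$ zeros counted with multiplicity in $I$. (One gets for free, along the way, that the full Wronskian $\Wr(V)$ is nonvanishing on $I$ — a disconjugate space has no nonzero element with $k$ zeros counted with multiplicity — which is the $i=k$ case of the theorem and a useful cross-check.)

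I expect the disconjugacy of $DU$ to be the main obstacle, because it runs Rolle's theorem \emph{backwards}: from a nonzero $H' \in DU$ with $\geq k-1$ zeros counted with multiplicity one must produce a constant $c$ with $H - c \in U$ having $\geq k$ zeros counted with multiplicity, contradicting disconjugacy of $U$. This is painless when $H'$ has a single zero of order $k-1$ at a point $\xi$ — take $c := H(\xi)$, so $H - c$ vanishes to order $k$ at $\xi$ — but a single additive constant cannot create a high-order zero at several distinct critical points of $H$ at once, so the general case needs a global argument tracking the values of $H$ at its successive local extrema and at (or approaching) the endpoints of $I$; this is precisely where the hypothesis that $I$ is open or compact enters. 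This is the classical content of the theorems of Markov \cite{markoff04}, P\'{o}lya \cite{polya22}, Hartman \cite{hartman69}, and Zielke \cite{zielke79}, and at this point I would either invoke \cite{zielke79, hartman69} directly or reproduce the generalized Rolle / mean-value argument — conveniently encoded in the determinant identity relating $\detstar(1, h_2, \dots, h_k; x_0, \dots, x_{k-1})$ to $\detstar(h_2', \dots, h_k'; \xi_1, \dots, \xi_{k-1})$ with each $\xi_i$ interlacing the $x_i$, in the spirit of \cref{detstar_lemma}. For open $I$ one could alternatively finish via \cref{chebyshev_disconjugate_markov_chebyshev}: since $\Wr(DU)$ is nonvanishing it would suffice to show $DU$ is merely a Chebyshev space — but this carries the same backward-Rolle difficulty with distinct zeros.
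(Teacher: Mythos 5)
The paper does not prove this statement; it is quoted as a classical result of Zielke and Hartman, so there is no internal proof to compare against. Your outline is the standard factorization-by-induction argument (essentially Coppel's route), and every step where you commit to details is sound: the identity $\Wr(f_1,\dots,f_i)=f_1^i\,\Wr\bigl((f_2/f_1)',\dots,(f_i/f_1)'\bigr)$ is correct, the kernel computation giving $\dim W=k-1$ is correct, and the compact-interval construction $f_1:=f_a+f_b$ does produce a nonvanishing element (each summand is strictly positive where the other vanishes). You have also correctly located where the hypothesis on $I$ enters: the paper's own example of $\spn(x,x^2-1)$ on $[-1,1)$ is disconjugate but contains no nonvanishing element, so your step ``produce $f_1$'' is exactly what fails for half-open intervals. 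The two points you defer --- the nonvanishing element on an \emph{open} interval, and disconjugacy of the derived space --- are indeed the entire mathematical content of the theorem, and your instinct that the naive backward-Rolle argument (choosing a single constant $c$ so that $H-c$ acquires $k$ zeros) cannot work when the zeros of $H'$ are spread over several points is right; the classical proofs circumvent this via principal solutions at the endpoints or via the interlacing determinant identity you mention at the end, not by inverting Rolle directly. Since you ultimately invoke the same references the paper cites for precisely these steps, your proposal is consistent with the paper's treatment; as a self-contained proof it is incomplete, but you have flagged the gaps accurately rather than papered over them.
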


We point out that \cref{chebyshev_disconjugate_markov_disconjugate} does not hold for half-open intervals. For example, consider the $2$-dimensional space $V$ spanned by $x$ and $x^2-1$, with $I := [-1,1)$ \cite[Example 10.1]{zielke79}.

\subsection{Proof of \texorpdfstring{\cref{conjectures_equivalent}\ref{conjectures_equivalent_disconjugacy}}{Theorem \ref{conjectures_equivalent}\ref{conjectures_equivalent_disconjugacy}}}\label{sec_disconjugate_proof}
In this subsection we prove \cref{conjectures_equivalent}\ref{conjectures_equivalent_disconjugacy}, along with some other results stated in \cref{sec_introduction}. Recall the bilinear pairing introduced in \cref{sec_pairing}.
\begin{prop}\label{positivity_conjecture_duality}
Let $0 \le k \le n$. Then for $V\in\Gr_{k,n}(\mathbb{R})$, \cref{positivity_conjecture}\ref{positivity_conjecture_nonnegative} holds for $V$ if and only if \cref{positivity_conjecture}\ref{positivity_conjecture_nonnegative} holds for $V^\perp$. In particular, \cref{positivity_conjecture}\ref{positivity_conjecture_nonnegative} holds for $\Gr_{k,n}(\mathbb{R})$ if and only if it holds for $\Gr_{n-k,n}(\mathbb{R})$. The statements above hold with ``\cref{positivity_conjecture}\ref{positivity_conjecture_nonnegative}'' replaced by ``\cref{positivity_conjecture}\ref{positivity_conjecture_positive}''.
\end{prop}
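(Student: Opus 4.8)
The plan is to deduce this immediately from the two compatibility results already established for the bilinear pairing, namely \cref{perpendicular_wronskian} (the map $V \mapsto V^\perp$ preserves the Wronskian, including the order of vanishing at $\infty = -\infty$) and \cref{perpendicular_pluckers} (the map $V \mapsto V^\perp$ preserves total nonnegativity and total positivity). The point is that \cref{positivity_conjecture}\ref{positivity_conjecture_nonnegative} for a fixed $V$ is the implication ``if all complex zeros of $\Wr(V)$ lie in $[-\infty,0]$, then $V$ is totally nonnegative,'' and each half of this implication is transported verbatim to $V^\perp$ by one of these two results.

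Concretely, first I would observe that by \cref{perpendicular_wronskian} we have $\Wr(V) = \Wr(V^\perp)$ as elements of $\mathbb{P}(\mathbb{C}[x]_{\le k(n-k)})$, so their multisets of complex zeros in $\mathbb{P}^1(\mathbb{C})$ (with the zero at $\pm\infty$ counted as usual) coincide; hence the hypothesis ``all complex zeros of $\Wr(V)$ lie in $[-\infty,0]$'' holds for $V$ if and only if it holds for $V^\perp$. Next, by \cref{perpendicular_pluckers}, $V$ is totally nonnegative if and only if $V^\perp$ is. Combining these two equivalences shows that the implication constituting \cref{positivity_conjecture}\ref{positivity_conjecture_nonnegative} holds for $V$ if and only if it holds for $V^\perp$.

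For the ``in particular'' clause, I would note that the pairing $\langle\cdot,\cdot\rangle$ in \eqref{pairing} is nondegenerate (it pairs the standard basis vector $e_i$ nontrivially only with $e_{n+1-i}$, with a nonzero coefficient), so $V \mapsto V^\perp$ is an involutive bijection between $\Gr_{k,n}(\mathbb{R})$ and $\Gr_{n-k,n}(\mathbb{R})$. Therefore \cref{positivity_conjecture}\ref{positivity_conjecture_nonnegative} holding for every $V \in \Gr_{k,n}(\mathbb{R})$ is equivalent to it holding for every element of $\Gr_{n-k,n}(\mathbb{R})$. Finally, the entire argument goes through with ``totally nonnegative'' replaced by ``totally positive'' and $[-\infty,0]$ replaced by $(-\infty,0)$, since \cref{perpendicular_wronskian} and \cref{perpendicular_pluckers} apply equally to the positive case; this gives the last sentence of the statement. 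There is no real obstacle here beyond keeping the bookkeeping of the point at infinity consistent, which is exactly what the ``projective'' formulation of \cref{perpendicular_wronskian} handles for us.
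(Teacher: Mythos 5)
Your proposal is correct and matches the paper's own argument, which likewise derives the statement directly from \cref{perpendicular_pluckers} and \cref{perpendicular_wronskian}. You have simply spelled out the details (preservation of the zero set of the Wronskian, preservation of total nonnegativity/positivity, and the involutivity of $V \mapsto V^\perp$) that the paper leaves implicit.
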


\begin{proof}
This follows from \cref{perpendicular_pluckers} and \cref{perpendicular_wronskian}.
\end{proof}

\begin{prop}\label{positivity_conjecture_equivalence}
Let $0 \le k \le n$.
\begin{enumerate}[label=(\roman*), leftmargin=*, itemsep=2pt]
\item\label{positivity_conjecture_equivalence_wronskian} \cref{positivity_conjecture}\ref{positivity_conjecture_nonnegative} holds for $\Gr_{k,n}(\mathbb{R})$ if and only if \cref{positivity_conjecture}\ref{positivity_conjecture_positive} holds for $\Gr_{k,n}(\mathbb{R})$.
\item\label{positivity_conjecture_equivalence_secant} \cref{positive_secant_conjecture}\ref{positive_secant_conjecture_nonnegative} holds for $\Gr_{n-k,n}(\mathbb{C})$ if and only if \cref{positive_secant_conjecture}\ref{positive_secant_conjecture_positive} holds for $\Gr_{n-k,n}(\mathbb{C})$.
\end{enumerate}

\end{prop}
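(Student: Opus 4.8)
The two parts have parallel proofs, and in each the implication from the ``strict'' statement (intervals $(0,\infty)$ or $(-\infty,0)$) to the ``nonnegative'' one (intervals $[0,\infty]$ or $[-\infty,0]$) is a short limiting argument, while the converse is the substantial direction.

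\emph{Strict $\Rightarrow$ nonnegative.} For $t>0$ set $\alpha(t):=\left[\begin{smallmatrix}\sqrt{1+t^2} & -t\\ -t & \sqrt{1+t^2}\end{smallmatrix}\right]\in\SL_2(\mathbb{R})$; a direct check shows that $\alpha(t)$ carries the closed arc $[-\infty,0]$ of $\mathbb{P}^1(\mathbb{R})$ into the open arc $(-\infty,0)$ and that $\alpha(t)\to\id$ as $t\to0^+$. Assume \cref{positivity_conjecture}\ref{positivity_conjecture_positive} holds for $\Gr_{k,n}(\mathbb{R})$, and let $V$ have all complex zeros of $\Wr(V)$ in $[-\infty,0]$. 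By \cref{wronskian_SL}, the zeros of $\Wr(\alpha(t)\cdot V)=\alpha(t)\cdot\Wr(V)$ lie in $\alpha(t)\cdot[-\infty,0]\subseteq(-\infty,0)$, so $\alpha(t)\cdot V\in\Gr_{k,n}^{>0}$ by hypothesis; since $\Gr_{k,n}^{\ge 0}$ is closed in $\Gr_{k,n}(\mathbb{R})$ and $\alpha(t)\cdot V\to V$ as $t\to 0^+$, we get $V\in\Gr_{k,n}^{\ge 0}$, which is \cref{positivity_conjecture}\ref{positivity_conjecture_nonnegative}. For part~\ref{positivity_conjecture_equivalence_secant} one uses instead the element of $\SL_2(\mathbb{R})$ carrying $[0,\infty]$ into $(0,\infty)$, applied to the whole Schubert datum: by \cref{defn_SL_action}, \cref{wronskian_SL}, and \cref{dual_problem}, the intervals $I_l$, the multisets $X_l$, the planes $W_l$, and the solution set of \eqref{schubert_problem} transform $\SL_2(\mathbb{R})$-equivariantly, so the number of solutions is preserved, and \cref{positive_secant_conjecture}\ref{positive_secant_conjecture_positive} applied to the transformed datum, together with $\alpha(t)\to\id$ and the closedness of $\Gr_{n-k,n}^{\ge 0}$, shows that every solution lies in $\Gr_{n-k,n}^{\ge 0}$.

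\emph{Nonnegative $\Rightarrow$ strict.} Now assume \cref{positivity_conjecture}\ref{positivity_conjecture_nonnegative} holds for $\Gr_{k,n}(\mathbb{R})$, and let $V$ have all complex zeros of $\Wr(V)$ in $(-\infty,0)$; in particular $\deg\Wr(V)=k(n-k)$ and $\Wr(V)$ is nonzero on $[0,\infty]$. One must show that every Plücker coordinate of $V$ is nonzero, and a limiting argument cannot suffice here, since $\overline{\Gr_{k,n}^{>0}}=\Gr_{k,n}^{\ge 0}$. The plan: by \cref{positivity_conjecture}\ref{positivity_conjecture_nonnegative} we have $V\in\Gr_{k,n}^{\ge 0}$, so by \cref{gantmakher_krein} the coefficient sequence of every nonzero $f\in V$ has at most $k-1$ sign changes, whence by Descartes' rule of signs $f$ has at most $k-1$ zeros in $(0,\infty)$; thus $V$ is disconjugate on $(0,\infty)$. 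Using \cref{chebyshev_disconjugate_markov_chebyshev} and the hypothesis that \emph{all} complex zeros of $\Wr(V)$ are real and negative, one upgrades this to disconjugacy of $V$ on the closed arc $[0,\infty]$, and hence by \cref{chebyshev_disconjugate_markov_disconjugate} to $V$ being a Markov space on $[0,\infty]$; then, by the single-Grassmannian refinement of \cref{flag_wronskian}\ref{flag_wronskian_positive} obtained from \cref{flag_wronskian} and \cref{flag_extension}, one concludes $V\in\Gr_{k,n}^{>0}$. For part~\ref{positivity_conjecture_equivalence_secant} this handles the case in which each $W_l$ is an osculating plane (via \cref{dual_problem}), and a density argument in the parameters $X_l$ passes to general secant planes.

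The main obstacle is the step upgrading disconjugacy of $V$ from the open interval $(0,\infty)$ to the closed arc $[0,\infty]$: because \cref{chebyshev_disconjugate_markov_disconjugate} genuinely fails on half-open intervals, the endpoints $0$ and $\infty$ cannot simply be adjoined, so one cannot conclude from ``$\Wr(V)$ nonzero on $[0,\infty]$'' alone. It is precisely here that the full hypothesis ``all complex zeros of $\Wr(V)$ lie in $(-\infty,0)$'' must be exploited; an alternative route for the case in which $\Wr(V)$ has $k(n-k)$ distinct zeros is to observe, using \cref{reality}, that the Wronski map restricts to a $d_{k,n}$-fold covering of the locus of such polynomials, so that $\Wr^{-1}$ of this locus is an \emph{open} subset of $\Gr_{k,n}(\mathbb{R})$ and hence, being contained in $\Gr_{k,n}^{\ge 0}$, is contained in its interior $\Gr_{k,n}^{>0}$ — but reducing the general (repeated-zero) case to this distinct case again runs into the same difficulty of passing to a limit while retaining positivity.
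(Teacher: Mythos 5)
Your ``strict $\Rightarrow$ nonnegative'' direction is correct and is essentially the paper's argument: act by an $\SL_2(\mathbb{R})$-element pushing the closed arc into the open one, apply the strict statement, and let the group element tend to the identity using closedness of the totally nonnegative part.

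The converse direction has a genuine gap, which you partly acknowledge yourself. Two distinct problems. First, the upgrade from disconjugacy of $V$ on $(0,\infty)$ to disconjugacy (and then a Markov basis) on the closed arc $[0,\infty]$ is never carried out; you correctly note that \cref{chebyshev_disconjugate_markov_disconjugate} fails on half-open intervals and that the endpoints cannot simply be adjoined, but you do not supply the missing argument. Second, and more seriously, even if you had a Markov basis of $V$ on $[0,\infty]$, the ``single-Grassmannian refinement of \cref{flag_wronskian}\ref{flag_wronskian_positive}'' that you invoke to conclude $V\in\Gr_{k,n}^{>0}$ does not exist: \cref{flag_wronskian} requires the Wronskian condition for \emph{every} step of a complete flag, and \cref{eg_partial_flag} exhibits a $V_2\in\Gr_{2,4}(\mathbb{R})$ admitting a Markov basis on $[0,\infty]$ (both $\Wr(V_1)$ and $\Wr(V_2)$ there are nonzero on $[0,\infty]$, with the expected degrees) which is not even totally nonnegative. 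This is exactly why the paper's proof of \cref{conjectures_equivalent}\ref{conjectures_equivalent_disconjugacy} must produce Markov bases of both $V$ and $V^\perp$ before it can apply \cref{flag_wronskian}. Your fallback via the covering-map/openness observation also does not close the loop, as you note.

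The paper avoids all of this with a short perturbation argument that you are missing: given $V$ with all zeros of $\Wr(V)$ in $(-\infty,0)$, set $V(t):=(\shift{-t}\circ\rev\circ\shift{-t}\circ\rev)(V)$; by \cref{wronskian_SL} and \cref{wronskian_rev} the zeros of $\Wr(V(t))$ remain in $(-\infty,0)$ for $t>0$ small, so \cref{positivity_conjecture}\ref{positivity_conjecture_nonnegative} gives $V(t)\in\Gr_{k,n}^{\ge 0}$; then $V=(\rev\circ\shift{t}\circ\rev\circ\shift{t})(V(t))$ is the image of a totally nonnegative point under a totally positive matrix (\cref{shift_totally_positive}), hence totally positive by Cauchy--Binet. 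The same trick, applied to the data $(I_l,X_l,W_l)$ of the Schubert problem via \cref{dual_problem} and \cref{perpendicular_pluckers}, handles part \ref{positivity_conjecture_equivalence_secant}; your proposed ``density argument in the parameters $X_l$'' from osculating to secant planes is not a routine step and is not what is needed here.
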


\begin{proof}
\ref{positivity_conjecture_equivalence_wronskian} ($\Leftarrow$) Suppose that \cref{positivity_conjecture}\ref{positivity_conjecture_positive} holds for $\Gr_{k,n}(\mathbb{R})$, and let $V\in\Gr_{k,n}(\mathbb{R})$ such that all zeros of $\Wr(V)$ lie in the interval $[-\infty,0]$. We must show that $V$ is totally nonnegative. For $t > 0$, define $\alpha(t) := \scalebox{0.8}{$\begin{bmatrix}1 & -t \\ -t & 1+t^2\end{bmatrix}$}\in\SL_2(\mathbb{R})$. By \cref{wronskian_SL}, all zeros of $\Wr(\alpha(t)\cdot V)$ lie in $\alpha(t) \cdot [-\infty,0] = (-\frac{1}{t}, \frac{-t}{1+t^2})$, and in particular, they lie in $(-\infty, 0)$. Hence by \cref{positivity_conjecture}\ref{positivity_conjecture_positive}, $\alpha(t)\cdot V$ is totally positive. Taking $t \to 0$ gives the result.

($\Rightarrow$) Suppose that \cref{positivity_conjecture}\ref{positivity_conjecture_nonnegative} holds for $\Gr_{k,n}(\mathbb{R})$, and let $V\in\Gr_{k,n}(\mathbb{R})$ such that all zeros of $\Wr(V)$ lie in $(-\infty,0)$. We must show that $V$ is totally positive. For $t > 0$, define $V(t) := (\shift{-t} \circ \rev \circ \shift{-t} \circ \rev)(V)\in\Gr_{k,n}(\mathbb{R})$. By \cref{wronskian_SL} and \cref{wronskian_rev}, we may fix $t > 0$ sufficiently small so that all zeros of $\Wr(V(t))$ lie in $(-\infty,0)$. By \cref{positivity_conjecture}\ref{positivity_conjecture_nonnegative}, $V(t)$ is totally nonnegative. Observe that $V = (\rev \circ \shift{t} \circ \rev \circ \shift{t})(V(t))$, and $\rev \circ \shift{t} \circ \rev \circ \shift{t}$ is a totally positive $n\times n$ matrix by \cref{shift_totally_positive}. Therefore $V$ is totally positive by the Cauchy--Binet identity \eqref{cauchy-binet}.

\ref{positivity_conjecture_equivalence_secant} We adopt the assumptions of \cref{positive_secant_conjecture}, so that $W_l = \Rnc{X_l}$ for $1 \le l \le k(n-k)$. By \cref{dual_problem}, we can write the Schubert problem \eqref{schubert_problem} as
$$
U^\perp\cap\zeros{-X_l}\neq\{0\} \quad \text{ for } 1 \le l \le k(n-k).
$$
Recall that $\alpha\cdot\zeros{-X_l} = \zeros{\alpha\cdot(-X_l)}$ for all $\alpha\in\SL_2(\mathbb{C})$, and also that by \cref{perpendicular_pluckers}, the map $\cdot^\perp$ preserves total nonnegativity and total positivity. Therefore writing $V = U^\perp$, we can use a similar argument as in the proof of part \ref{positivity_conjecture_equivalence_wronskian} above.
\end{proof}

We now proceed to the proof of \cref{conjectures_equivalent}\ref{conjectures_equivalent_disconjugacy}. We will show that \cref{positivity_conjecture}\ref{positivity_conjecture_nonnegative} implies \cref{disconjugacy_conjecture} and that \cref{disconjugacy_conjecture} implies \cref{positivity_conjecture}\ref{positivity_conjecture_positive}, which is sufficient by \cref{positivity_conjecture_equivalence}\ref{positivity_conjecture_equivalence_wronskian}.
\begin{proof}[Proof that \cref{positivity_conjecture}\ref{positivity_conjecture_nonnegative} for $\Gr_{k,n}(\mathbb{R})$ implies \cref{disconjugacy_conjecture} for $\Gr_{k,n}(\mathbb{R})$]
Suppose that \cref{positivity_conjecture}\ref{positivity_conjecture_nonnegative} holds for $\Gr_{k,n}(\mathbb{R})$. Given $V\in\Gr_{k,n}(\mathbb{R})$ such that all zeros of $\Wr(V)$ are real, and an interval $I\subseteq\mathbb{R}$ on which $\Wr(V)$ is nonzero, we must show that $V$ is a Chebyshev space on $I$. By \cref{wronskian_SL}, after acting by an element of $\SL_2(\mathbb{R})$, we may assume that all zeros of $\Wr(V)$ lie in $[-\infty, 0]$, and $I\subseteq (0,\infty)$. By \cref{positivity_conjecture}\ref{positivity_conjecture_nonnegative}, $V$ is totally nonnegative.

We now give two ways to complete the proof. Firstly, by \cref{flag_extension}, there exists a totally nonnegative complete flag $W = (W_1, \dots, W_{n-1})\in\Fl_n^{\ge 0}$ with $W_k = V$. By \cref{flag_wronskian}\ref{flag_wronskian_nonnegative}, $W$ is a Markov system on $(0,\infty)$, so $W_k = V$ is a Markov space. By \cref{markov_disconjugate_chebyshev}, $V$ is a Chebyshev space.

Alternatively, let $f\in V$ be a nonzero polynomial. By \cref{gantmakher_krein}, the sequence of coefficients of $f$ changes sign at most $k-1$ times. Therefore by Descartes's rule of signs, $f$ has at most $k-1$ zeros in $(0, \infty)$.
\end{proof}

\begin{proof}[Proof that \cref{disconjugacy_conjecture} for $\Gr_{k,n}(\mathbb{R})$, $\Gr_{n-k,n}(\mathbb{R})$ implies \cref{positivity_conjecture}\ref{positivity_conjecture_positive} for $\Gr_{k,n}(\mathbb{R})$]
Suppose that \cref{disconjugacy_conjecture} holds for $\Gr_{k,n}(\mathbb{R})$ and $\Gr_{n-k,n}(\mathbb{R})$. Given $V\in\Gr_{k,n}(\mathbb{R})$ such that all zeros of $\Wr(V)$ lie in $(-\infty,0)$, we must show that $V$ is totally positive. Let $I\subseteq\mathbb{P}^1(\mathbb{R})$ be an open interval containing $[0,\infty]$ such that $\Wr(V)$ is nonzero on $I$. (The results we are about to apply are stated for intervals of $\mathbb{R}$, not of $\mathbb{P}^1(\mathbb{R})$, but they are still valid. One way to see this is to act by an element of $\SL_2(\mathbb{R})$ so that $I$ does not contain $\infty$.) By \cref{disconjugacy_conjecture} and \cref{chebyshev_disconjugate_markov_chebyshev}, $V$ is disconjugate on $I$. By \cref{chebyshev_disconjugate_markov_disconjugate}, $V$ has a Markov basis $(f_1, \dots, f_k)$ on $[0,\infty]$. By an analogous argument, using \cref{perpendicular_wronskian}, $V^\perp$ has a Markov basis $(g_1, \dots, g_{n-k})$ on $[0,\infty]$.

Define the complete flag $W = (W_1, \dots, W_{n-1})\in\Fl_n(\mathbb{R})$ by
$$
W_i := \spn(f_1, \dots, f_i) \text{ for } 1 \le i \le k, \quad W_i := \spn(g_1, \dots, g_{n-i})^\perp \text{ for } k \le i \le n-1.
$$
Then $\Wr(W_i)$ is nonzero on $[0,\infty]$ for $1 \le i \le n-1$, where for $i > k$ we again use \cref{perpendicular_wronskian}. By \cref{flag_wronskian}\ref{flag_wronskian_positive}, we have $W\in\Fl_n^{>0}$. Since $V = W_k$, we obtain $V\in\Gr_{k,n}^{>0}$.
\end{proof}

\subsection{Proof of \texorpdfstring{\cref{conjectures_equivalent}\ref{conjectures_equivalent_secant}}{Theorem \ref{conjectures_equivalent}\ref{conjectures_equivalent_secant}}}\label{sec_secant_proof}
In this subsection we prove \cref{conjectures_equivalent}\ref{conjectures_equivalent_secant}, using an argument of Eremenko \cite{eremenko15}. We will need the following generalization of the intermediate value theorem, due to Miranda \cite{miranda40}:
\begin{thm}[Poincar\'{e}--Miranda \cite{kulpa97}]\label{poincare-miranda}
Let $C := [r_1, s_1] \times \cdots \times [r_n, s_n] \subseteq\mathbb{R}^n$ be the product of $n$ compact intervals of $\mathbb{R}$. For $1 \le i \le n$, let $\phi_i : C \to\mathbb{R}$ be a continuous function such that
$$
\phi_i(a)\phi_i(b) \le 0 \quad \text{ for all $a,b\in\mathbb{R}^n$ such that $a_i = r_i$ and $b_i = s_i$}.
$$
Then there exists a point $a\in C$ such that $\phi_1(a) = \cdots = \phi_n(a) = 0$. 
\end{thm}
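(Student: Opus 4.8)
The plan is to reduce the statement to Brouwer's fixed-point theorem, arguing by induction on $n$. For $n=1$ the assertion is exactly the intermediate value theorem applied to $\phi_1$ on $[r_1,s_1]$, since $\phi_1(r_1)\phi_1(s_1)\le 0$. So I would suppose $n\ge 2$ and that the theorem holds in dimension $n-1$.

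First I would peel off a degenerate case: suppose some $\phi_i$ vanishes identically on one of its two facets, call it $F:=\{x\in C:x_i=r_i\}$ or $\{x\in C:x_i=s_i\}$. Then $F$ is a product of $n-1$ compact intervals, and for each $j\ne i$ the restriction $\phi_j|_F$ inherits the hypotheses of the theorem on $F$, because each $j$-facet of $F$ is contained in the corresponding $j$-facet of $C$. By the inductive hypothesis there is $y\in F$ with $\phi_j(y)=0$ for all $j\ne i$, and $\phi_i(y)=0$ since $y\in F$; so $y$ is a common zero. Hence I may assume from now on that no $\phi_i$ vanishes identically on either of its facets.

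Under that assumption, for each $i$ I claim that after replacing $\phi_i$ by $\pm\phi_i$ we can arrange $\phi_i\ge 0$ on $\{x_i=r_i\}$ and $\phi_i\le 0$ on $\{x_i=s_i\}$: fixing a point of the facet $\{x_i=s_i\}$ where $\phi_i\ne 0$ and using $\phi_i(a)\phi_i(b)\le 0$ shows $\phi_i$ is single-signed on $\{x_i=r_i\}$, and likewise on $\{x_i=s_i\}$, with opposite signs on the two facets. Now define $g:C\to C$ coordinatewise by $g(x)_i:=\max\bigl(r_i,\min(s_i,\,x_i+\phi_i(x))\bigr)$. This is continuous and maps the compact convex set $C$ into itself, so Brouwer yields a fixed point $x^{*}$. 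For each $i$ put $v:=x^{*}_i+\phi_i(x^{*})$. If $v<r_i$, then $g(x^{*})_i=r_i$ forces $x^{*}_i=r_i$, so $x^{*}$ lies on the facet where $\phi_i\ge 0$, whence $v=r_i+\phi_i(x^{*})\ge r_i$, a contradiction; the case $v>s_i$ is symmetric using $\phi_i\le 0$ on the top facet. Therefore $r_i\le v\le s_i$, so $g(x^{*})_i=v=x^{*}_i$, i.e.\ $\phi_i(x^{*})=0$. Thus $x^{*}$ is the desired common zero.

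The delicate points I anticipate are (i) choosing the sign of the increment in $g$ correctly relative to the normalized boundary behaviour — with the opposite sign the boundary cases become uninformative and nothing is forced — and (ii) isolating the degenerate case, which does not admit the sign normalization and so must be handled separately by the inductive reduction to a facet; everything else is routine. One could instead run a degree-theoretic argument: after the same sign normalization, the straight-line homotopy from $(\phi_1,\dots,\phi_n)$ to an affine isomorphism of $\mathbb{R}^n$ whose unique zero is the centre of $C$ is nonvanishing on $\partial C$, forcing the Brouwer degree at $0$ to be $\pm 1$ and hence a zero; but the fixed-point argument above seems the cleanest.
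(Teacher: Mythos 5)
Your proposal is correct. Note that the paper does not prove this statement at all: it is quoted as a classical theorem of Miranda with a citation to Kulpa's survey, so there is no internal proof to compare against. Your argument is a complete, self-contained derivation from Brouwer's fixed-point theorem, and it is essentially the standard one. The two points you flag as delicate are indeed the ones that matter, and you handle both correctly: the sign normalization ($\phi_i\ge 0$ on $\{x_i=r_i\}$, $\phi_i\le 0$ on $\{x_i=s_i\}$) is exactly what makes the boundary cases of the truncated map $g(x)_i=\max(r_i,\min(s_i,x_i+\phi_i(x)))$ yield a contradiction at a fixed point, and the inductive reduction to a facet correctly disposes of the degenerate case where some $\phi_i$ vanishes identically on a facet (which also silently covers degenerate intervals $r_i=s_i$, where the hypothesis forces $\phi_i\equiv 0$). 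The only cosmetic remark is that the hypothesis as printed in the paper should read $a,b\in C$ rather than $a,b\in\mathbb{R}^n$; you interpret it in the intended way.
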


The following result is implicit in \cite{eremenko15} in the case that the multisets $X_l$ have no repeated elements (also see \cite[Lemma 2]{eremenko_gabrielov_shapiro_vainshtein06}); for completeness, we give a proof.
\begin{lem}[{Eremenko \cite[p.\ 341]{eremenko15}}]\label{topological_lemma} 
Let $0 \le k \le n$, and suppose that \cref{disconjugacy_conjecture} holds for $\Gr_{k,n}(\mathbb{R})$. Let $I_1, \dots, I_{k(n-k)}$ be pairwise disjoint intervals of $\mathbb{P}^1(\mathbb{R})$. For $1 \le l \le k(n-k)$, let $X_l$ be a multiset of $k$ points contained in $I_l$, and let $W_l := \Rnc{X_l}$. Consider the Schubert problem \eqref{schubert_problem} with $U = V^\perp$, for $V\in\Gr_{k,n}(\mathbb{C})$:
\begin{align}\label{topological_lemma_problem}
V^\perp\cap W_l\neq\{0\} \quad \text{ for } 1 \le l \le k(n-k).
\end{align}
Then there are $d_{k,n}$ distinct solutions $V\in\Gr_{k,n}(\mathbb{C})$ to \eqref{topological_lemma_problem}, where each such $V$ is real, and each interval $-I_l$ (for $1 \le l \le k(n-k)$) contains a zero of $\Wr(V)$.
\end{lem}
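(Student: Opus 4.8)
We follow the topological degree argument of Eremenko \cite{eremenko15}. First I would translate \eqref{topological_lemma_problem} by writing $U = V^\perp$: by \cref{dual_problem} it is equivalent to $V\cap\zeros{-X_l}\neq\{0\}$ for $1\le l\le k(n-k)$. Put $J_l := -I_l$ (pairwise disjoint intervals of $\mathbb{P}^1(\mathbb{R})$) and $Y_l := -X_l$ (a multiset of $k$ points in $J_l$). Since $\alpha\cdot\zeros{Y} = \zeros{\alpha\cdot Y}$ for $\alpha\in\SL_2(\mathbb{R})$, the Wronskian is $\SL_2$-equivariant (\cref{wronskian_SL}), and $\cdot^\perp$ preserves reality, this reformulated problem is $\SL_2(\mathbb{R})$-equivariant, so after acting by a suitable $\alpha$ I may assume all $J_l$ lie in a fixed compact subinterval of $\mathbb{R}\subseteq\mathbb{P}^1(\mathbb{R})$. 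After slightly shrinking, fix pairwise disjoint compact subintervals $[a_l,b_l]\subseteq J_l$ with $Y_l$ in the interior of $[a_l,b_l]$, and set $B := \prod_{l}[a_l,b_l]$.

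Next I would set up the continuous families of candidate solutions. It is classical, and follows from \cref{reality}, that the Wronski map $\Gr_{k,n}(\mathbb{C})\to\mathbb{P}(\mathbb{C}[x]_{\le k(n-k)})$ is finite of degree $d_{k,n}$ and restricts over the locus of $k(n-k)$ distinct real points to a $d_{k,n}$-sheeted (topological) covering all of whose fibre points are real. The box $B$ is contractible and maps into this locus (its coordinates are automatically distinct, as the $[a_l,b_l]$ are disjoint), so the covering pulls back to a trivial one: we obtain continuous maps $\mathbf z\mapsto V_1(\mathbf z),\dots,V_{d_{k,n}}(\mathbf z)\in\Gr_{k,n}(\mathbb{R})$ with $\Wr(V_i(\mathbf z))$ having zero set $\{z_1,\dots,z_{k(n-k)}\}$, pairwise distinct for each fixed $\mathbf z$, together with continuous real bases $(f^{(i)}_1(\mathbf z),\dots,f^{(i)}_k(\mathbf z))$. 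For a fixed branch $i$, set $\phi_l(\mathbf z) := \detstar(f^{(i)}_1(\mathbf z),\dots,f^{(i)}_k(\mathbf z);Y_l)$ for $1\le l\le k(n-k)$; by \cref{detstar_lemma}, a common zero $\mathbf z^*$ of $\phi_1,\dots,\phi_{k(n-k)}$ is precisely a point at which $V_i(\mathbf z^*)$ solves the reformulated Schubert problem.

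The crux is to verify the Poincar\'{e}--Miranda sign hypothesis for $B$: that $\phi_l$ is nonvanishing, with opposite signs, on the faces $\{z_l = a_l\}$ and $\{z_l = b_l\}$, and this is where the disconjugacy conjecture enters. When $z_l = a_l$, pick $c\in(\max Y_l, b_l)$; the open interval $(a_l,c)$ contains $Y_l$ and avoids every zero of $\Wr(V_i(\mathbf z))$ (these are the $z_m$, and $z_m\in[a_m,b_m]$ is disjoint from $(a_l,c)$ for $m\neq l$, while $z_l=a_l\notin(a_l,c)$). By \cref{disconjugacy_conjecture}, $V_i(\mathbf z)$ is a Chebyshev space on $(a_l,c)$, hence disconjugate there by \cref{chebyshev_disconjugate_markov_chebyshev}; thus no nonzero element of $V_i(\mathbf z)$ has $k$ zeros with multiplicity in $(a_l,c)$, so by \cref{detstar_lemma} we get $\phi_l(\mathbf z)\neq 0$. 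Since the face $\{z_l=a_l\}$ is connected, $\phi_l$ has a constant sign $\epsilon_L$ there; symmetrically $\phi_l$ has constant sign $\epsilon_R$ on $\{z_l=b_l\}$ (using an interval to the left of $Y_l$). To see $\epsilon_L=-\epsilon_R$, deform $Y_l$ within the interior of $[a_l,b_l]$ to a multiset concentrated at a single point $y_*$; the same disconjugacy argument keeps $\detstar(f^{(i)}_\bullet;Y_l^{(\tau)})$ nonvanishing on each face throughout, so $\epsilon_L,\epsilon_R$ are unchanged, while for the concentrated multiset $\detstar(f^{(i)}_\bullet(\mathbf z);\{y_*,\dots,y_*\}) = \Wr(f^{(i)}_1(\mathbf z),\dots,f^{(i)}_k(\mathbf z))(y_*)$, a polynomial in $x$ of exact degree $k(n-k)$ (all its zeros, the $z_m$, being finite) whose leading coefficient has constant sign on the connected set $B$. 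Evaluating at $y_*$: each factor $y_*-z_m$ with $m\neq l$ keeps a fixed sign as $z_m$ ranges over $[a_m,b_m]$ (disjoint from $[a_l,b_l]\ni y_*$), so only the factor $y_*-z_l$ changes sign between the two faces, giving $\epsilon_L=-\epsilon_R$. Then \cref{poincare-miranda} yields $\mathbf z^*\in B$ with $\phi_l(\mathbf z^*)=0$ for all $l$; since $\phi_l\neq 0$ on the faces, $\mathbf z^*$ is interior, so its coordinates are distinct, $V_i(\mathbf z^*)$ is real (\cref{reality}), and $\Wr(V_i(\mathbf z^*))$ has a zero in each $[a_l,b_l]\subseteq J_l = -I_l$.

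Finally I would count. Carrying this out for each $i=1,\dots,d_{k,n}$ gives solutions $V_i(\mathbf z^{*,i})$; if $V_i(\mathbf z^{*,i})=V_j(\mathbf z^{*,j})$, equality of Wronskians forces $\mathbf z^{*,i}=\mathbf z^{*,j}$ (the zero set has exactly one point in each $[a_l,b_l]$, which pins down the ordered tuple in $B$), whence $i=j$ since $V_1(\mathbf z^*),\dots,V_{d_{k,n}}(\mathbf z^*)$ are distinct. So we obtain $d_{k,n}$ distinct real solutions; as the Schubert problem \eqref{schubert_problem} is zero-dimensional for these secant conditions and so has at most $d_{k,n}$ solutions over $\mathbb{C}$ by the standard Schubert-calculus bound (cf.\ \cite{garcia-puente_hein_hillar_martin_del_campo_ruffo_sottile_teitler12}), these are all of them, and undoing the initial $\SL_2(\mathbb{R})$ move transports the conclusion back. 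The main obstacle is the sign analysis of the third paragraph: the disconjugacy conjecture — not merely the reality furnished by \cref{reality} — is exactly what makes $\phi_l$ provably nonvanishing on the faces of $B$, and reducing the sign computation from a spread-out multiset $Y_l$ to a concentrated one (so that $\detstar$ becomes an honest Wronskian value with a simple zero) is the delicate step, with no counterpart in the proof of \cref{reality} itself.
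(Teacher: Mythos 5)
Your proof is correct and follows essentially the same route as the paper: dualize via \cref{dual_problem}, reduce with $\SL_2(\mathbb{R})$, extract continuous branches from \cref{reality}, define $\phi_l$ through $\detstar$, and use \cref{disconjugacy_conjecture} together with \cref{chebyshev_disconjugate_markov_chebyshev} to verify the Poincar\'{e}--Miranda hypothesis. The only substantive difference is bookkeeping --- the paper allows $X_l$ to touch the endpoints of $I_l$ and obtains the weak inequality $\phi_l(a)\phi_l(b)\le 0$ by a limit from an enlarged open box, whereas you shrink the box to force strict nonvanishing on the faces; in your deformation of $Y_l$ to a concentrated multiset you should invoke \cref{detstar_limit} to justify that the sign of $\detstar$ stays locally constant as the multiplicity structure of its arguments changes.
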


\begin{proof}
We assume that each interval $I_l$ (for $1 \le l \le k(n-k)$) is compact, by replacing $I_l$ with a compact subinterval, if necessary. By \cref{dual_problem}, we can write the Schubert problem \eqref{topological_lemma_problem} as
\begin{align}\label{topological_lemma_rewritten}
V\cap \zeros{-X_l}\neq\{0\} \quad \text{ for } 1 \le l \le k(n-k).
\end{align}
In particular, after acting by an element of $\SL_2(\mathbb{R})$, we may assume that every interval $I_l$ (for $1 \le l \le k(n-k)$) does not contain $\infty$, so that $-I_l = [r_l, s_l]$ for some $r_l \le s_l$ in $\mathbb{R}$. Let us relabel the intervals $I_1, \dots, I_{k(n-k)}$ so that $r_1 < \cdots < r_{k(n-k)}$. Set $C := (-I_1) \times \cdots \times (-I_{k(n-k)})\subseteq\mathbb{R}^{k(n-k)}$ and $S := \{a\in\mathbb{R}^{k(n-k)} : a_1 < \cdots < a_{k(n-k)}\}$, and note that $C\subseteq S$.

By \cref{reality}, there exists a continuous function
$$
S\to\Gr_{k,n}(\mathbb{R})^{d_{k,n}}, \quad a \mapsto (V_1(a), \dots, V_{d_{k,n}}(a)),
$$
where $V_1(a), \dots, V_{d_{k,n}}(a)$ are distinct and all have the same Wronskian, namely, the polynomial with zeros $a_1, \dots, a_{k(n-k)}$. Fix $1 \le m \le d_{k,n}$, and take a basis $f_1(a), \dots, f_k(a)\in\mathbb{C}[x]_{\le n-1}$ of $V_m(a)$, where $f_1, \dots, f_k : S\to\mathbb{C}[x]_{\le n-1}$ are continuous functions satisfying
\begin{align}\label{topological_lemma_wronskian}
\Wr(f_1(a), \dots, f_k(a)) = \prod_{l=1}^{k(n-k)}(x-a_l) \quad \text{ for all } a \in S.
\end{align}

For $1 \le l \le k(n-k)$, define the continuous function
$$
\phi_l : S \to \mathbb{R}, \quad a \mapsto \detstar(f_1(a), \dots, f_k(a) ; -X_l),
$$
where above, the elements of $-X_l$ are written in weakly increasing order. We claim that it suffices to show that for $1 \le l \le k(n-k)$, we have
\begin{align}\label{topological_lemma_claim}
\phi_l(a)\phi_l(b) \le 0 \quad \text{ for all $a,b\in\mathbb{R}^{k(n-k)}$ such that $a_l = r_l$ and $b_l = s_l$}.
\end{align}
Indeed, then \cref{poincare-miranda} implies that there exists a point $a\in C$ (depending on $m$) such that $\phi_1(a) = \cdots = \phi_{k(n-k)}(a) = 0$. By \cref{detstar_lemma}, $V_m(a)$ is a solution of \eqref{topological_lemma_rewritten}, and hence also of \eqref{topological_lemma_problem}. Also, the interval $-I_l$ (for $1 \le l \le k(n-k)$) contains $a_l$, which is a zero of $\Wr(V_m(a))$. Hence the elements $V_m(a)$ for $1 \le m \le d_{k,n}$ provide $d_{k,n}$ distinct solutions of \eqref{topological_lemma_problem} with the desired properties.

We now fix $1 \le l \le k(n-k)$, and verify that \eqref{topological_lemma_claim} holds. Take $q < r_l$ and $t > s_l$ so that the open interval $(q,t)$ is disjoint from the intervals $-I_1, \dots, -I_{k(n-k)}$, except for $-I_l$. Set
$$
\tilde{C} := (-I_1) \times \cdots \times (-I_{l-1}) \times (q,t) \times (-I_{l+1}) \times \cdots \times (-I_{k(n-k)})\subseteq\mathbb{R}^{k(n-k)}.
$$
By \cref{disconjugacy_conjecture} and \cref{disconjugacy_equivalent}, the space $V_m(a)$ is disconjugate on every interval disjoint from $\{a_1, \dots, a_{k(n-k)}\}$. By \cref{defn_disconjugate} and \cref{detstar_limit},
$$
\detstar(f_1(a), \dots, f_k(a) ; x_1, \dots, x_k)
$$
is nonzero of a constant sign for all $a\in\tilde{C}$ and $x_1 \le \cdots \le x_k$ in the interval $(a_l, t)$. Note that the expression above equals $\Wr(f_1(a), \dots, f_k(a))$ when $x_1 = \cdots = x_k = x$, which has sign $(-1)^{k(n-k)-l}$ by \eqref{topological_lemma_wronskian}. Writing each $a\in C$ with $a_l = r_l$ as a limit of points $\tilde{a}\in\tilde{C}$ with $\tilde{a}_l < r_l$, we find that
$$
(-1)^{k(n-k)-l}\phi_l(a) \ge 0 \quad \text{ for all $a\in\mathbb{R}^{k(n-k)}$ such that $a_l = r_l$}.
$$
Similarly, we obtain
$$
(-1)^{k(n-k)-l+1}\phi_l(b) \ge 0 \quad \text{ for all $b\in\mathbb{R}^{k(n-k)}$ such that $b_l = s_l$},
$$
and \eqref{topological_lemma_claim} follows.
\end{proof}

\begin{proof}[Proof of \cref{conjectures_equivalent}\ref{conjectures_equivalent_secant}]
We show that \cref{positivity_conjecture}\ref{positivity_conjecture_nonnegative} holds for $\Gr_{k,n}(\mathbb{R})$ if and only if \cref{positive_secant_conjecture}\ref{positive_secant_conjecture_nonnegative} holds for $\Gr_{n-k,n}(\mathbb{C})$. This suffices by \cref{positivity_conjecture_equivalence}.

($\Rightarrow$) Suppose that \cref{positivity_conjecture}\ref{positivity_conjecture_nonnegative} holds for $\Gr_{k,n}(\mathbb{R})$. By \cref{conjectures_equivalent}\ref{conjectures_equivalent_disconjugacy}, \cref{disconjugacy_conjecture} holds for $\Gr_{k,n}(\mathbb{R})$, so we may apply \cref{topological_lemma} to the Schubert problem in \cref{positive_secant_conjecture}\ref{positive_secant_conjecture_nonnegative}. We conclude that it has $d_{k,n}$ distinct solutions $U\in\Gr_{n-k,n}(\mathbb{R})$, where all zeros of $\Wr(U^\perp)$ lie in the interval $[-\infty,0]$. By \cref{positivity_conjecture}\ref{positivity_conjecture_nonnegative}, each $U^\perp$ is totally nonnegative, and so $U$ is totally nonnegative by \cref{perpendicular_pluckers}.

($\Leftarrow$) Suppose that \cref{positive_secant_conjecture}\ref{positive_secant_conjecture_nonnegative} holds for $\Gr_{n-k,n}(\mathbb{C})$, and consider the special case when each $X_l$ consists of a single element of multiplicity $k$. By \cref{dual_problem} and \cref{perpendicular_pluckers}, this special case can be stated equivalently as follows: if $V\in\Gr_{k,n}(\mathbb{C})$ such that all zeros of $\Wr(V)$ are distinct and lie in the interval $[-\infty,0]$, then $V$ is totally nonnegative. We obtain the same result when $\Wr(V)$ has repeated zeros by a limiting argument (see e.g.\ \cite[Section 1.3]{mukhin_tarasov_varchenko09a}).
\end{proof}

\subsection{Example: \texorpdfstring{$\Gr_{2,4}(\mathbb{R})$}{Gr(2,4)}}\label{eg_positivity_conjecture}
We verify \cref{positivity_conjecture}\ref{positivity_conjecture_positive} for $\Gr_{2,4}(\mathbb{R})$. Let $V\in\Gr_{2,4}(\mathbb{R})$ such that all zeros of $\Wr(V)$ lie in the interval $(-\infty, 0)$. We wish to show that $V$ is totally positive. We may write
$$
\Wr(V) = (1+r_1x)(1+r_2x)(1+r_3x)(1+r_4x) = 1 + e_1x + e_2x^2 + e_3x^3 + e_4x^4,
$$
where $r_1, r_2, r_3, r_4 > 0$, and $e_i$ (for $1 \le i \le 4$) is the $i$th elementary symmetric polynomial in $r_1, r_2, r_3, r_4$. Using \cref{eg_wronskian_pluckers} and the Pl\"{u}cker relation $\Delta_{13}\Delta_{24} = \Delta_{12}\Delta_{34} + \Delta_{14}\Delta_{23}$, we find that the Pl\"{u}cker coordinates of $V$ are
$$
\Delta_{12} = 1,\quad \Delta_{13} = \frac{e_1}{2},\quad \Delta_{14} = \frac{e_2 \pm \sqrt{\kappa}}{6},\quad \Delta_{23} = \frac{e_2 \mp \sqrt{\kappa}}{2},\quad \Delta_{24} = \frac{e_3}{2},\quad \Delta_{34} = e_4,
$$
where
$$
\kappa := e_2^2 - 3e_1e_3 + 12e_4 = \frac{(r_1-r_2)^2(r_3-r_4)^2 + (r_1-r_3)^2(r_2-r_4)^2 + (r_1-r_4)^2(r_2-r_3)^2}{2}.
$$
(The fact that $\kappa \ge 0$ is equivalent to \cref{reality} for $\Gr_{2,4}(\mathbb{C})$; cf.\ \cite[Example 2.2]{sottile00}.) Therefore $V$ is totally positive if and only if $e_2^2 > \kappa$. This is equivalent to $e_1e_3 > 4e_4$, which then follows by expanding both sides in $r_1, r_2, r_3, r_4$.

\bibliographystyle{alpha}
\bibliography{ref}

\end{document}